\newtheorem{theorem}{Theorem}[section]
\newtheorem{lem}[theorem]{Lemma}
\newtheorem{corollary}[theorem]{Corollary}
\newtheorem{claim}[theorem]{Claim}
\newcommand{\mc}[1]{\mathcal{#1}}
\newcommand{\mf}[1]{\mathfrak{#1}}
\newcommand{\bb}[1]{\mathbb{#1}}
\newcommand{\brm}[1]{\operatorname{#1}}
\newcommand{\eps}{\varepsilon}
\newcommand{\pl}[1]{\brm{#1}}
\title{Tur\'{a}n Numbers of Extensions}
\author{Sergey Norin} 
\address{Department of Mathematics and Statistics, McGill University.}
\email{snorin@math.mcgill.ca}
\thanks{Supported by an NSERC grant 418520.}
\author{Liana Yepremyan}
\address{School of Computer Science, McGill University.} \email{ liana.yepremyan@mail.mcgill.ca} 
\date{}
\begin{document}

\begin{abstract}
The \emph{extension} of an $r$-uniform hypergraph $\mc{G}$ is obtained from it by adding for every pair of vertices of $\mc{G}$, which is not covered by an edge in $\mc{G}$, an extra edge containing this pair and $(r-2)$ new vertices. Keevash~\cite{keevash} and Sidorenko~\cite{sidorenko} have previously determined Tur\'an densities of two families of hypergraph extensions. We  determine the Tur\'an numbers for these families,  using classical stability techniques and new tools introduced in~\cite{gentriangle}.
\end{abstract}

\maketitle
\section{Introduction}

In this paper we employ and extend the methods introduced in~\cite{gentriangle} to determine Tur\'an numbers of extensions of a family of hypergraphs. We start by presenting the necessary definitions.

We study $r$-uniform hypergraphs, which we call $r$-graphs for brevity. We denote the vertex set of an $r$-graph $\mc{G}$ by $V(\mc{G})$ and the number of its vertices by $\brm{v}(\mc{G})$.
Let $\mf{F}$ be a family of $r$-graphs. An $r$-graph $\mc{G}$ is \emph{$\mf{F}$-free} if it does not contain any member of $\mathfrak{F}$ as a subgraph. The \emph{Tur\'{a}n number} $\brm{ex}(n,\mathfrak{F})$ is the maximum size of an $\mathfrak{F}$-free $r$-graph of order $n$:
\[\brm{ex}(n,\mathfrak{F}) = \max\left\{|\mc{G}| : \brm{v}(\mc{G})=n \textrm{ and } \mc{G} \textrm{ is } \mathfrak{F}-\textrm{free}\right\}.\]
When $\mathfrak{F}$ contains just one element, say $\mathfrak{F}=\{\mc{F}\}$, we write $\brm{ex}(n, \mc{F})= \brm{ex}(n,\mathfrak{F})$.  The \emph{Tur\'{a}n density} of the family of \(r\)-graphs \(\mf{F}\)  is defined as
\[\pi(\mf{F})  = \lim_{n \rightarrow \infty}\frac{\brm{ex}(n, \mf{F})}{{n \choose r}}.\]
We say that a pair of vertices $\{u,v\} \in V(\mc{G})$ is \emph{covered in $\mc{G}$} if $\{u,v\} \subseteq E$ for some $E \in \mc{G}$, and it is \emph{uncovered}, otherwise. We say that $\mc{G}$ \emph{covers pairs} if every pair of vertices is covered in $\mc{G}$.
Given an \(r\)-graph \(\mc{G}\), the \emph{extension of \(\mc{G}\)}, denoted by \(\brm{Ext}(\mc{G})\), is an \(r\)-graph defined as follows. For every uncovered pair $P$ in \(\mc{G}\) we add $r-2$ new vertices $v^P_1,v^P_2,\ldots,v^P_{r-2}$ to $V(\mc{G})$,  and add the edge $P \cup \{v^P_1,v^P_2,\ldots,v^P_{r-2} \}$ to $\mc{G}$. 

In~\cite{gentriangle} we determined the Tur\'{a}n number of the extension of the graph consisting of two \(r\)-edges, sharing \(r-1\) vertices, for \(r=5,6\) for large $n$ . (This graph is known as the \emph{generalized triangle}.) In this paper we consider two different families of extensions.

Our first main result is connected to the famous Erd\H{o}s-S\'{o}s conjecture from 1963, which asserts that if \(G\) is a simple graph of order \(n\) with average degree more than \(k-2\), then \(G\) contains every tree on $k$ vertices as a subgraph. This conjecture has been verified for several families of trees, and in early 1990's the proof of the conjecture for large enough \(k\) was announced by Ajtai, Koml\'{o}s, Simonovits and Szemer\'{e}di.  We say that a tree is  \emph{an Erd\H{o}s-S\'{o}s-tree} if it satisfies the conjecture.  Given a \(2\)-graph \(G\), define the \emph{\((r-2)\)-expansion of $G$} to be  the \(r\)-graph obtained by adding \((r-2)\) vertices to $G$ and  enlarging each edge of $G$ to contain these vertices. In ~\cite{sidorenko} Sidorenko proved the following.

\begin{theorem}[\cite{sidorenko}]\label{thm:sidorenko} For every \(r\geq 2\), there exists \(M_r\) such that if \(T\) is an Erd\H{o}s-S\'{o}s-tree on \(t\geq M_r\) vertices then \(\pi(\brm{Ext}(\mc{T})) =  r!(t+r-3)^{-r}{t+r-3\choose r}\), where  \(\mc{T}\) is the \((r-2)\)-expansion of \(T\).
\end{theorem}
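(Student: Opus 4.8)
\emph{Lower bound.} Let $n$ be large, partition $[n]$ into $t+r-3$ classes of sizes as equal as possible, and let $\mc{G}_0$ be the $r$-graph of all $r$-subsets meeting each class at most once. Two vertices of $\mc{G}_0$ are covered if and only if they lie in distinct classes, so any copy of $\brm{Ext}(\mc{T})$ — which covers all pairs and has $\brm{v}(\mc{T})=t+r-2>t+r-3$ vertices — would have two vertices in one class, a contradiction. Thus $\mc{G}_0$ is $\brm{Ext}(\mc{T})$-free and $\brm{ex}(n,\brm{Ext}(\mc{T}))\ge|\mc{G}_0|=(1+o(1))\,r!(t+r-3)^{-r}\binom{t+r-3}{r}\binom{n}{r}$, giving the lower bound on $\pi(\brm{Ext}(\mc{T}))$.

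\emph{Reduction.} For the matching upper bound the main tool is the following: if $\mc{H}$ is $\brm{Ext}(\mc{T})$-free and every covered pair of $\mc{H}$ is covered $\ge\delta\,\brm{v}(\mc{H})^{r-2}$ times, then for every bounded set $D$ on which $\mc{H}$ covers all pairs and every $(r-2)$-set $B\subseteq D$, the link of $B$ in $\mc{H}[D]$ restricted to $D\setminus B$ is $T$-free, and hence (since $T$ is an Erd\H os--S\'os tree) has at most $\tfrac{t-2}{2}|D\setminus B|$ edges. Indeed, a copy of $T$ in that link together with $B$ is a copy of $\mc{T}$ whose image is a clique of the covered graph of $\mc{H}$; greedily adding, for each non-edge of $T$, a private $(r-2)$-set chosen from the $\ge\delta\,\brm{v}(\mc{H})^{r-2}$ edges through the corresponding covered pair while avoiding the $O(1)$ vertices used so far extends this to a copy of $\brm{Ext}(\mc{T})$, a contradiction. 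Any blow-up with classes of size at least a fixed constant satisfies the codegree hypothesis, and an arbitrary $n$-vertex graph can be made to satisfy it by deleting $O(\delta n^r)$ edges.

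\emph{Symmetrization.} Take a maximum-size $\brm{Ext}(\mc{T})$-free $r$-graph $\mc{G}$ on $n$ vertices. Zykov-type symmetrization — replacing the link of a vertex $v$ by a copy of the link of a vertex $u$ of no smaller degree — is legitimate here precisely because $\brm{Ext}(\mc{T})$ covers all pairs: the new graph has no edge through both $u$ and $v$, so a copy of $\brm{Ext}(\mc{T})$ in it uses at most one of them and, if it uses $v$, reroutes to a copy in the old graph; thus $\brm{Ext}(\mc{T})$-freeness is preserved and the edge count does not decrease. Iterating, we may assume $\mc{G}$ is a blow-up of a fixed $r$-graph $\mc{K}$ with optimal class sizes, so $|\mc{G}|=(1+o(1))\,r!\,\lambda(\mc{K})\binom{n}{r}$, where $\lambda(\mc{K})=\max\{\sum_{e\in\mc{K}}\prod_{i\in e}x_i:x_i\ge0,\ \sum_i x_i=1\}$. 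Hence it suffices to show $\lambda(\mc{K})\le\lambda(K^{(r)}_{t+r-3})=(t+r-3)^{-r}\binom{t+r-3}{r}$ whenever the blow-up of $\mc{K}$ is $\brm{Ext}(\mc{T})$-free. Passing to the support of an optimal weighting and iterating, we may assume $\mc{K}$ covers every pair of $V(\mc{K})$ and this support is all of $V(\mc{K})$; if $\brm{v}(\mc{K})\le t+r-3$ we are done since $m\mapsto\lambda(K^{(r)}_m)$ is increasing, so assume $m:=\brm{v}(\mc{K})\ge t+r-2$. Choosing one vertex from each class of a large blow-up gives a bounded pair-covering set, so the Reduction forces every $(r-2)$-link of $\mc{K}$, restricted to the remaining $m-r+2\ge t$ vertices, to have at most $\tfrac{t-2}{2}(m-r+2)$ edges.

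\emph{The core estimate.} What remains — that a pair-covering $r$-graph $\mc{K}$ on $m\ge t+r-2$ vertices whose $(r-2)$-links (restricted to the complementary $m-r+2$ vertices) each have at most $\tfrac{t-2}{2}(m-r+2)$ edges must satisfy $\lambda(\mc{K})\le\lambda(K^{(r)}_{t+r-3})$ — is the technical heart, and I expect it to be the main obstacle. Summing the link bounds yields only $|\mc{K}|\le\frac{t-2}{m-r+1}\binom{m}{r}$, which is too weak once $m$ is large; instead one should weight each link inequality by the product of the corresponding entries of an optimal weighting $y$ of $\mc{K}$ and combine this with the stationarity identities $\sum_{e\ni i}\prod_{k\in e\setminus i}y_k=r\lambda(\mc{K})$ in order to push $\lambda(\mc{K})$ below $\lambda(K^{(r)}_{t+r-3})$. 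This is exactly where the hypothesis $t\ge M_r$ enters: for small trees a strictly denser pair-covering configuration with $T$-free links can exist, so the clean extremal value holds only once $t$ is large relative to $r$. Granting this Lagrangian estimate, the symmetrization step gives $\brm{ex}(n,\brm{Ext}(\mc{T}))\le\big(r!(t+r-3)^{-r}\binom{t+r-3}{r}+o(1)\big)\binom{n}{r}$, matching the lower bound and completing the proof.
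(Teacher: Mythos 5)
This statement is quoted by the paper from Sidorenko and is not reproved here (only its key ingredient, Lemma 3.3 of Sidorenko, is imported as Theorem~\ref{sidorenko1}), so your proposal has to stand on its own; it has two genuine gaps. The first is in the symmetrization step: your justification rests on the claim that $\brm{Ext}(\mc{T})$ covers all pairs, and this is false for $r\geq 3$. Each new vertex $v^P_i$ lies in exactly one edge of $\brm{Ext}(\mc{T})$, so the new vertices attached to two distinct uncovered pairs of $\mc{T}$ are themselves an uncovered pair of $\brm{Ext}(\mc{T})$. Hence a copy of $\brm{Ext}(\mc{T})$ in the symmetrized graph may use both $u$ and $v$ (as images of two such mutually uncovered vertices), and your rerouting argument collapses; equivalently, $\brm{Forb}(\brm{Ext}(\mc{T}))$ is not clonable, which is exactly the obstacle this paper spends Section~\ref{sec:genoutline} on. To make the reduction to a Lagrangian statement legitimate one must first pass, via supersaturation and the removal lemma (Corollary~\ref{cor:erdos}, Lemma~\ref{stabcore}, Lemma~\ref{factcore}), to graphs avoiding all \emph{weak} extensions of $\mc{T}$ --- that family is closed under blowups and its pair-covering members are $\mc{T}$-free --- and only then symmetrize or take Lagrangians. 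The conclusion you want ($\pi(\brm{Ext}(\mc{T}))=r!\sup\lambda$ over pair-covering $\mc{T}$-free graphs) is true, but not by the one-line ``covers all pairs'' argument.

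The second and more serious gap is the step you yourself label ``the core estimate'': that every pair-covering configuration surviving your reduction has Lagrangian at most $\lambda(\mc{K}^{(r)}_{t+r-3})=(t+r-3)^{-r}\binom{t+r-3}{r}$. You note that summing the Erd\H{o}s--S\'os link bounds is too weak, sketch a hoped-for weighting of those inequalities by an optimal Lagrangian weighting plus stationarity, and then proceed ``granting this Lagrangian estimate.'' But this estimate \emph{is} Sidorenko's theorem --- it is precisely his Lemma 3.3 (Theorem~\ref{sidorenko1} here), proved by induction on $r$: one uses the monotonicity of $f_r$ (this is where $t\geq M_r$ enters) to force the maximum weight of a near-optimal weighting up to essentially $1/(t+r-3)$, passes to the link of a maximum-weight vertex, which is free of the $(r-3)$-expansion of $T$, and invokes the Erd\H{o}s--S\'os bound at the base case $r=2$, using the identity $f_r(x)=\frac{1}{r}\left(\frac{x+r-4}{x+r-3}\right)^{r-1}f_{r-1}(x)$ to close the induction (compare the scheme of Lemma~\ref{auxlem1}). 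Without carrying out this inductive Lagrangian argument, your upper bound is a strategy outline rather than a proof. (Your lower-bound construction and the codegree-cleaning reduction are fine.)
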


Note that the quantity $(t+r-3)^{-r}{t+r-3\choose r}$ above is the Lagrangian of the complete \(r\)-graph on \((t+r-3)\) vertices.  We postpone the definition of the Lagrangian to Section~\ref{sec:notation}. Let $\mc{K}^{(r)}_{p}(n)$ denote the balanced blowup of $\mc{K}^{(r)}_{p}$ on $n$ vertices. That is $\pl{v}(\mc{K}^{(r)}_{p}(n))=n$,  there exists a partition $(P_1,P_2,\ldots,P_{p})$ of $V(\mc{K}^{(r)}_{p}(n))$ such that $|P_i| \in \{\lfloor\frac{n}{p} \rfloor, \lceil \frac{n}{p} \rceil\}$ for every $i \in [p]$, and an $r$-element subset of $V(\mc{K}^{(r)}_{p}(n))$ is an edge if and only if it contains at most one element of each $P_i$. We prove the following exact version of Theorem~\ref{thm:sidorenko}.

\begin{theorem}
\label{thm:trees} For every \(r\geq 2\), there exists \(M_r\) such that the following holds. Let \(T\) be an Erd\H{o}s-S\'{o}s-tree on \(t\geq M_r\) vertices and let $\mc{T}$ be the $(r-2)$ expansion of $T$. Then there exists \(n_0\) such that  \(\mc{K}^{(r)}_{t+r-3}(n)\) is the unique \(\brm{Ext}(\mc{T})\)-free \(r\)-graph on \(n\) vertices with the maximum number of edges for all \(n\geq n_0\). 
\end{theorem}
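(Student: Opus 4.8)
The plan is the standard two-step route: first prove a \emph{stability} version of Theorem~\ref{thm:sidorenko}, then bootstrap it to the exact statement by a local optimisation argument. Throughout write $p=t+r-3$, $\mc{H}=\brm{Ext}(\mc{T})$, and let $\lambda=\binom{p}{r}p^{-r}$ be the Lagrangian of $\mc{K}^{(r)}_p$, so that $\pi(\mc{H})=r!\lambda$ by Theorem~\ref{thm:sidorenko}. I first record why $\mc{K}^{(r)}_p(n)$ is $\mc{H}$-free: an $r$-graph $H$ embeds into a complete $r$-partite blowup with $p$ parts, each of size at least $\brm{v}(H)$, if and only if $V(H)$ admits a colouring with $p$ colours in which every edge is rainbow; and for $H=\mc{H}$ this "rainbow chromatic number'' equals $t+r-2>p$. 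Indeed the $r-2$ apex vertices of $\mc{T}$ must receive $r-2$ distinct colours, every edge of $\mc{T}$ forces its two tree-endpoints to avoid these and each other, and, crucially, every extension edge forces \emph{every} pair of vertices of $T$ to receive distinct colours; hence $V(T)$ alone already consumes $t$ further colours. So $\mc{K}^{(r)}_p(n)$ is $\mc{H}$-free, and a direct count gives $|\mc{K}^{(r)}_p(n)|=(1-o(1))\lambda n^r$, matching the density.

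\smallskip
\noindent\textbf{Stability.} I would show: for every $\eps>0$ there exist $\delta>0$ and $n_1$ such that every $\mc{H}$-free $r$-graph $\mc{G}$ on $n\ge n_1$ vertices with $|\mc{G}|\ge(\lambda-\delta)n^r$ admits a partition $V(\mc{G})=V_1\cup\dots\cup V_p$ with all but at most $\eps n^r$ edges \emph{crossing} (meeting each $V_i$ in at most one vertex). The mechanism driving this is the same one behind Theorem~\ref{thm:sidorenko}: if some $(r-2)$-subset $S\subseteq V(\mc{G})$ has link graph of average degree exceeding $t-2$, then, $T$ being an Erd\H{o}s--S\'os-tree, the link contains $T$; and since $n$ is large, the uncovered pairs of the resulting copy of $\mc{T}$ can each be completed with $r-2$ fresh vertices to yield a copy of $\mc{H}$ — a contradiction. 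Thus the codegree structure of a near-extremal $\mc{G}$ is governed by $\mc{H}$-free, pair-degree-bounded configurations, and a Lagrangian/symmetrisation argument of the type developed in~\cite{gentriangle}, together with a supersaturation (or hypergraph removal) step, pins $\mc{G}$ onto the blowup structure above. I expect the cleanest formulation is to deduce stability from the density statement via uniqueness of the optimal weighting attaining $\lambda(\mc{K}^{(r)}_p)$.

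\smallskip
\noindent\textbf{From stability to exactness.} Fix $\mc{G}$ with $|\mc{G}|=\brm{ex}(n,\mc{H})$; since $\mc{K}^{(r)}_p(n)$ is admissible, $|\mc{G}|\ge|\mc{K}^{(r)}_p(n)|$, so stability applies. After reassigning the $o(n)$ misplaced vertices one may assume the partition $V_1\cup\dots\cup V_p$ also satisfies that every vertex lies in few non-crossing edges and, by the usual "delete a low-degree vertex and compare with $\brm{ex}(n-1,\mc{H})$'' argument, has crossing degree at least $\brm{ex}(n,\mc{H})-\brm{ex}(n-1,\mc{H})-o(n^{r-1})=(1-o(1))\binom{p-1}{r-1}p^{-(r-1)}n^{r-1}$; that is, the crossing sub-$r$-graph of $\mc{G}$ is almost complete with uniformly large minimum crossing degree. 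The key claim is that $\mc{G}$ has \emph{no} non-crossing edge. Suppose $E\in\mc{G}$ has two vertices $v,w$ in a common part $V_i$. Choose two non-adjacent vertices $u_1,u_2$ of $T$ (available since $t$ is large); as $\{u_1,u_2\}$ is uncovered in $\mc{T}$, its extension edge can be realised by $E$ with $u_1\mapsto v$, $u_2\mapsto w$ and the other $r-2$ vertices of $E$ as fresh vertices. The rest of $\mc{H}$ is then embedded greedily in the almost-complete, large-minimum-degree crossing structure: the $r-2$ apexes of $\mc{T}$ go to $r-2$ further parts, the remaining $t-2$ vertices of $T$ to $t-2$ more parts, all distinct — which just fits, since $1+(t-2)+(r-2)=p$, the identification of $u_1$ with $u_2$ saving exactly one part — and every edge of $\mc{T}$, every remaining uncovered pair of $\mc{T}$, and all of the finitely many fresh extension vertices are routed through the dense crossing structure, which is possible because $n$ is enormous. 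This exhibits a copy of $\mc{H}$ in $\mc{G}$, a contradiction. Hence $\mc{G}$ is a sub-$r$-graph of the complete $p$-partite $r$-graph on $V_1,\dots,V_p$; among these the balanced complete one uniquely maximises the number of edges on $n$ vertices, so $|\mc{G}|\ge|\mc{K}^{(r)}_p(n)|$ forces $\mc{G}=\mc{K}^{(r)}_p(n)$.

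\smallskip
I expect the main obstacle to be the stability step together with the degree-regularisation and "cleaning'' inside the exactness step that make the greedy embedding legitimate; this is precisely where the quantitative tools of~\cite{gentriangle} are needed. The conceptual heart is that a \emph{single} non-crossing edge already suffices to build a copy of $\mc{H}$ — exploiting the one "saved'' part coming from $t+r-2=p+1$ — and this is where the hypothesis of large $n$ is essential.
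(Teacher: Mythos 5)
Your outer scheme (stability for $\brm{Ext}(\mc{T})$-free graphs, then exactness via ``one non-transversal edge yields a copy of $\brm{Ext}(\mc{T})$'', using the count $1+(t-2)+(r-2)=t+r-3$ and the fact that the two critical vertices may share a part) is sound and is essentially the paper's Lemma~\ref{transversal}/Lemma 4.1 mechanism, and your observation that $\mc{K}^{(r)}_{t+r-3}(n)$ is $\brm{Ext}(\mc{T})$-free is correct. But the proposal has a genuine gap exactly where the paper does its real work: the stability step. You assert that stability can be ``deduced from the density statement via uniqueness of the optimal weighting attaining $\lambda(\mc{K}^{(r)}_p)$'', but uniqueness of the maximizer for $\mc{K}^{(r)}_p$ says nothing about the other graphs in $\brm{Forb}(\mc{T})$: one must rule out $\mc{T}$-free graphs whose Lagrangian is close to $f_r(t)$ yet which are far from blowups of $\mc{K}^{(r)}_{t+r-3}$. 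This is precisely the content of Theorem~\ref{weakweightstability}, proved in the paper by a nontrivial induction on $r$ (Lemma~\ref{auxlem1}), passing to the link of a heaviest vertex and using Sidorenko's quantitative bound $\lambda(\mc{F},\mu)\le f_r(x)$ together with Lemma~\ref{sidolem}; nothing in your sketch replaces this argument, and ``Lagrangian/symmetrisation of the type in~\cite{gentriangle} plus removal'' is a description of a hoped-for proof, not a proof.

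A second concrete problem is the mechanism you invoke for producing copies of $\brm{Ext}(\mc{T})$: ``if an $(r-2)$-set has link of average degree $>t-2$, the link contains $T$, and the uncovered pairs of the resulting copy of $\mc{T}$ can each be completed with $r-2$ fresh vertices.'' In a general $\brm{Ext}(\mc{T})$-free graph $\mc{G}$ those pairs need not be covered by any edge of $\mc{G}$ at all, and even when they are covered, the covering edges need not supply \emph{fresh, pairwise distinct} extra vertices as the definition of $\brm{Ext}$ requires. The paper handles exactly this difficulty with the core/weak-extension machinery (Lemma~\ref{stabcore}, Theorem~\ref{corestability}, Lemma~\ref{factcore}), reducing via the removal lemma to the subfamily of graphs that cover pairs, for which forbidding $\brm{WExt}(\mc{T})$ really does force $\mc{T}$-freeness; your argument skips this reduction, so the claimed contradiction does not follow as stated. (By comparison, the remaining issues — the degree regularisation in the exactness step and the need to embed the tree vertices randomly rather than greedily so that the required transversal edges and pair-completions exist, since vertex-level link closeness of order $\eps n^{r-1}$ does not control codegrees of a fixed pair — are standard and fixable along the lines of the sampling argument in Lemma~\ref{transversal}.)
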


Our second result concerns extensions of a different class of sparse hypergraphs.
Let $\bar{\mc{K}}_t^{(r)}$ denote the edgeless $r$-graph on $t$ vertices. Mubayi~\cite{mubayi} determined $\pi(\brm{Ext}(\bar{\mc{K}}_t^{(r)}))$ and  Pikhurko~\cite{pikhurko} obtained the corresponding exact result.

\begin{theorem}[\cite{pikhurko}] For every \(t>r\geq 3\) there exists \(n_0\) such that \(\mc{K}^{(r)}_t(n)\) is the unique $\brm{Ext}(\bar{\mc{K}}_{t+1}^{(r)})$-free \(r\)-graph on \(n\) vertices with the maximum number of edges for all \(n\geq n_0\).
\end{theorem}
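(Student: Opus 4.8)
The plan is to follow the classical stability-plus-cleaning strategy, with the stability step supplied and streamlined by the machinery of \cite{gentriangle}. Write $\mc{H}=\brm{Ext}(\bar{\mc{K}}_{t+1}^{(r)})$, and note that $\mc{H}$ consists of a ``core'' of $t+1$ vertices together with $r-2$ private vertices for each of the $\binom{t+1}{2}$ pairs. Consequently, on the one hand, an $r$-graph $\mc{G}$ on $n$ vertices contains $\mc{H}$ whenever it contains $t+1$ vertices that are pairwise covered, provided $n$ is large and ``covered'' is strengthened to ``covered by $\Omega(n^{r-2})$ edges'', since then the $\binom{t+1}{2}(r-2)$ private vertices can be chosen pairwise disjoint by a routine greedy argument. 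On the other hand, $\mc{K}_t^{(r)}(n)$ is $\mc{H}$-free, because among any $t+1$ of its vertices two lie in a common part and hence form an uncovered pair. So the task is to show that, for large $n$, $\mc{K}_t^{(r)}(n)$ is the unique $\mc{H}$-free $r$-graph on $n$ vertices of maximum size.

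The first ingredient is the density result of Mubayi \cite{mubayi}, namely $\pi(\mc{H})=r!\,\lambda(\mc{K}_t^{(r)})=r!\binom{t}{r}t^{-r}$, which is the asymptotic edge density of $\mc{K}_t^{(r)}(n)$. Combined with supersaturation this should yield a stability statement: for every $\delta>0$ there is $\epsilon>0$ so that every $\mc{H}$-free $r$-graph $\mc{G}$ on $n$ vertices ($n$ large) with at least $(1-\epsilon)|\mc{K}_t^{(r)}(n)|$ edges admits a partition $V(\mc{G})=V_1\cup\cdots\cup V_t$ for which all but at most $\delta n^r$ edges are \emph{transversal} (meeting each $V_i$ at most once) and $|V_i|=(1+o(1))n/t$ for all $i$. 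I would deduce this from the Lagrangian-stability fact that near-optimal weightings of $\mc{K}_t^{(r)}$ are close to uniform, transported to hypergraphs via an expansion/removal argument; this is exactly the kind of passage the tools of \cite{gentriangle} are designed to carry out.

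Now fix an extremal $\mc{H}$-free $\mc{G}$ on $n$ vertices, so $|\mc{G}|\ge|\mc{K}_t^{(r)}(n)|$, take the partition from stability, and call an edge \emph{bad} if it has two vertices in a common part. The heart of the proof is to bootstrap ``few bad edges'' up to ``no bad edges'', which I would do in three moves. \emph{Degree stabilization}: if some vertex $v$ lies in $o(n^{r-1})$ transversal edges or in more than a vanishing fraction of all bad edges, then deleting $v$ and adding a clone $w'$ of a maximum-degree vertex $w$ creates no copy of $\mc{H}$ --- two clones form an uncovered pair and so cannot both be core vertices of a copy of $\mc{H}$, and tracing any copy of $\mc{H}$ in the new graph back (rerouting edges through $w'$ to $w$) produces $t+1$ pairwise-covered vertices already in $\mc{G}$, hence a copy of $\mc{H}$ there --- while increasing the edge count, contradicting extremality; iterating, every vertex has transversal degree $(1-o(1))\binom{t-1}{r-1}(n/t)^{r-1}$ and lies in $o(n^{r-1})$ bad edges. \emph{Killing bad edges}: if an edge contained $x,y$ in a common part $V_1$, then, using the large transversal degrees of $x$ and $y$ and the fact that all but $o(n^2)$ transversal pairs are covered robustly, a union bound over the $O(t^2)$ pair-types lets us choose $v_2\in V_2,\dots,v_t\in V_t$ so that every pair inside $\{x,y,v_2,\dots,v_t\}$ is covered, robustly except possibly $\{x,y\}$; greedily choosing disjoint private vertices then produces a copy of $\mc{H}$, a contradiction. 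Hence $\mc{G}$ is a subgraph of the complete $t$-partite $r$-graph on $V_1,\dots,V_t$. \emph{Balancing}: that graph has $\sum_{|S|=r}\prod_{i\in S}|V_i|\le|\mc{K}_t^{(r)}(n)|$ edges, with equality if and only if the parts are balanced, so $|\mc{G}|\ge|\mc{K}_t^{(r)}(n)|$ forces the parts to be balanced and $\mc{G}$ to be complete $t$-partite, i.e. $\mc{G}=\mc{K}_t^{(r)}(n)$.

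The main obstacle is the bootstrapping in the second move: one must make the error terms coming out of stability both small enough and uniform enough over all vertices that the counting locating $v_2,\dots,v_t$ (and then the disjoint private vertices) succeeds for a \emph{single} bad edge, not merely for a positive proportion of them. This forces a careful ordering of parameters ($\delta\ll\epsilon\ll 1/t\ll 1$, then $n$ large) and genuine use of the degree-stabilization step to upgrade ``$o(n^{r-1})$ bad edges on average'' to ``$o(n^{r-1})$ bad edges at every vertex''. A subsidiary delicate point is the verification, in the cloning step, that duplicating a vertex cannot create $\mc{H}$; this is where the special structure of extensions is used, namely that the core of $\mc{H}$ is pairwise covered while two clones are never co-covered, so clones cannot serve as core vertices and any copy of $\mc{H}$ produced by cloning can be pulled back to $\mc{G}$. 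The tools of \cite{gentriangle} are built to handle precisely this surgery, so I would invoke them rather than redo the extremal-graph manipulations from scratch.
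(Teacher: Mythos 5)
There is a genuine gap, and it sits at the heart of your ``degree stabilization'' move. You claim that deleting a deficient vertex and adding a clone $w'$ of a maximum-degree vertex $w$ ``creates no copy of $\mc{H}$'' because any copy can be traced back by rerouting edges through $w'$ to $w$, ``producing $t+1$ pairwise-covered vertices already in $\mc{G}$, hence a copy of $\mc{H}$ there.'' That last inference is false: $t+1$ pairwise-covered vertices only give a \emph{weak} extension of $\bar{\mc{K}}_{t+1}^{(r)}$ (the covering edges may share auxiliary vertices or pass through core vertices), not a copy of $\brm{Ext}(\bar{\mc{K}}_{t+1}^{(r)})$, and $\mc{G}$ is only assumed free of the latter. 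Indeed, if the copy in the modified graph uses both $w$ and $w'$ (say as private vertices of two different pair-edges, which is possible since $w,w'$ need not be co-covered), rerouting makes those two pair-edges share the vertex $w$; to upgrade this to a genuine copy of $\mc{H}$ you would need the relevant pairs to be covered by $\Omega(n^{r-2})$ edges, which is exactly the robustness you have no control over at this stage. Worse, the underlying claim that cloning is harmless is false in general: a graph containing a weak extension whose two pair-edges overlap only in one vertex $w$ can be $\brm{Ext}(\bar{\mc{K}}_{t+1}^{(r)})$-free, yet cloning $w$ immediately creates a genuine copy (use $w$ and its clone as the two private vertices). This is precisely why $\brm{Forb}(\brm{Ext}(\cdot))$ is \emph{not} closed under blowups, so the cloning surgery does not preserve $\mc{H}$-freeness and extremality is not contradicted; the bootstrapping step collapses as written. (Your stability input and the ``killing bad edges'' and ``balancing'' steps are fine in outline, and the stability statement can legitimately be quoted from Mubayi/Pikhurko.)

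The paper is built around exactly this obstruction, which is why its route differs from yours: one first passes, at a cost of $o(n^r)$ edges, to the \emph{core} $\brm{Forb}(\brm{WExt}(\bar{\mc{K}}_{t+1}^{(r)}))$, the maximal blowup-closed subfamily, via the removal-lemma argument of Lemma~\ref{stabcore} and Lemma~\ref{factcore}; only inside that clonable family are blowup/symmetrization-type manipulations safe. The exact result then follows from Corollary~\ref{cor:main} together with Theorem~\ref{vertexlocalstab} (sharp $t$-criticality of the extension, Lemma~\ref{lem:mubayilocal}) and the Lagrangian computation of Lemma~\ref{lem:mubayiweight}, the present statement being the special case $\mc{F}=\emptyset$ of Theorem~\ref{thm:mubayigen}. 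To repair your argument you must either perform the same preliminary reduction (so that all weak extensions, not just $\mc{H}$, are forbidden before any cloning), or replace cloning by a surgery that only invokes robustly covered pairs, as in Pikhurko's original symmetrization, where this point is handled with care; as it stands, the proposal's key step is not valid.
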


Keevash~\cite{keevash} considered the following generalization of the above problem. 
Let \(\mc{F}\) be any \(r\)-graph that covers pairs, and let $\mc{F}^{+t}$ be obtained from $\mc{F}$ by adding new isolated vertices so that $\pl{v}(\mc{F}^{+t})=t$. (We have $\emptyset^{+t}=\bar{\mc{K}}_{t}^{(r)}$, where $\emptyset$ denotes the null $r$-graph.) In~\cite{keevash} Keevash, generalizing the density argument from~\cite{mubayi}, proved the following.

\begin{theorem}[\cite{keevash}]\label{thm:keevash} Let \(\mc{F}\) be an \(r\)-graph that covers pairs with \(\pl{v}(\mc{F}) \leq t+1\). If \(\pi(\mc{F}) \leq r!t^{-r}{t\choose r}\), then \(\pi(\brm{Ext}(\mc{F}^{+(t+1)})) =r!t^{-r}{t\choose r}\). 
\end{theorem}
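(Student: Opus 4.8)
\medskip
\noindent\emph{Proof sketch.} The plan is to prove matching bounds on $\brm{ex}(n,\brm{Ext}(\mc{F}^{+(t+1)}))$, with the extremal density governed by $\lambda(\mc{K}^{(r)}_t)=t^{-r}\binom{t}{r}$, the Lagrangian of the complete $r$-graph on $t$ vertices. For the lower bound I would verify that $\mc{K}^{(r)}_t(n)$ is $\brm{Ext}(\mc{F}^{+(t+1)})$-free: the extension of any $r$-graph covers pairs, while $\brm{Ext}(\mc{F}^{+(t+1)})$ contains $\mc{F}^{+(t+1)}$ and so has at least $t+1$ vertices; on the other hand every pair-covering sub-$r$-graph of the balanced blowup $\mc{K}^{(r)}_t(n)$ has its vertices in distinct parts and hence has at most $t$ of them. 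Since $|\mc{K}^{(r)}_t(n)|=(r!\,t^{-r}\binom{t}{r}+o(1))\binom{n}{r}$, this gives $\pi(\brm{Ext}(\mc{F}^{+(t+1)}))\ge r!\,t^{-r}\binom{t}{r}$.

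For the upper bound, suppose toward a contradiction that $\mc{G}$ is $\brm{Ext}(\mc{F}^{+(t+1)})$-free on $n$ vertices, $n$ large, with $|\mc{G}|>(r!\,t^{-r}\binom{t}{r}+\eps)\binom{n}{r}$. First I would \emph{clean} $\mc{G}$: while some covered pair lies in fewer than $\delta n^{r-2}$ edges, delete all edges through it. At most $\binom{n}{2}$ pairs become uncovered, so for $\delta=\delta(\eps,r)$ small enough the cleaned graph $\mc{G}'$ still satisfies $|\mc{G}'|>(r!\,t^{-r}\binom{t}{r}+\eps/2)\binom{n}{r}$, is $\brm{Ext}(\mc{F}^{+(t+1)})$-free, and has the property that every covered pair lies in at least $\delta n^{r-2}$ edges. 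Next I would pass to the Lagrangian: let $x^{\ast}$ be an optimal weighting for $\lambda(\mc{G}')$ and, by the standard argument of moving weight away from pairs not covered by an edge, assume $\brm{supp}(x^{\ast})=U$ with $\mc{H}:=\mc{G}'[U]$ pair-covering, so that $\lambda(\mc{H})=\lambda(\mc{G}')\ge |\mc{G}'|/n^{r}>t^{-r}\binom{t}{r}+\eps'$ for a suitable $\eps'=\eps'(\eps,r)>0$.

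I would then split on $|U|$. If $|U|\le t$, then $\mc{H}\subseteq\mc{K}^{(r)}_{|U|}$ gives $\lambda(\mc{H})\le\lambda(\mc{K}^{(r)}_t)=t^{-r}\binom{t}{r}$, contradicting the bound above. So $|U|\ge t+1$. If $\mc{H}$ contains a copy of $\mc{F}$ with vertex set $S$, then since $U$ is a clique in the covering graph and $|U|\ge t+1\ge\brm{v}(\mc{F})$ I can choose $W\subseteq U\setminus S$ with $|S\cup W|=t+1$; every pair inside $S\cup W$ is covered in $\mc{G}'$, and since each covered pair lies in $\delta n^{r-2}$ edges I can greedily attach to each pair of $S\cup W$ not already inside the copy of $\mc{F}$ an extension edge on $r-2$ new vertices, pairwise disjoint and disjoint from $S\cup W$. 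This yields a copy of $\brm{Ext}(\mc{F}^{+(t+1)})$, a contradiction; hence $\mc{H}$ is $\mc{F}$-free. Because $\mc{F}$ covers pairs, any copy of $\mc{F}$ inside a blowup of $\mc{H}$ has its vertices in distinct parts and therefore projects to a copy of $\mc{F}$ in $\mc{H}$, so every blowup of $\mc{H}$ is $\mc{F}$-free. Blowing up $\mc{H}$ according to a rational approximation of $x^{\ast}$ on $N$ vertices then gives $\mc{F}$-free $r$-graphs with $(\lambda(\mc{H})-o(1))N^{r}$ edges, whence $\pi(\mc{F})\ge r!\,\lambda(\mc{H})>r!\big(t^{-r}\binom{t}{r}+\eps'\big)\ge\pi(\mc{F})+r!\eps'$, using the hypothesis $\pi(\mc{F})\le r!\,t^{-r}\binom{t}{r}$. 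This final contradiction proves $\pi(\brm{Ext}(\mc{F}^{+(t+1)}))\le r!\,t^{-r}\binom{t}{r}$, and together with the lower bound gives equality.

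The main obstacle is converting ``$\mc{G}$ has no copy of $\brm{Ext}(\mc{F}^{+(t+1)})$'' into a workable global statement. The cleaning must be robust enough that the $r-2$ new vertices of every extension edge can be attached freely, and the right object to analyse is not $\mc{G}$ but the pair-covering subgraph $\mc{H}$ carrying its Lagrangian; once $\mc{H}$ is in hand the trichotomy ``$|U|\le t$'' / ``$\mc{H}\supseteq\mc{F}$'' / ``$\mc{H}$ is $\mc{F}$-free'' closes the argument, and it is precisely the last branch that consumes the hypothesis $\pi(\mc{F})\le r!\,t^{-r}\binom{t}{r}$. A minor technical point worth isolating as a lemma is that $\mc{F}$-freeness is preserved under blowups, which uses that $\mc{F}$ covers pairs.
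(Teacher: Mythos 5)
This statement is quoted from Keevash~\cite{keevash} and is not proved in the paper, so there is no internal proof to compare against; the closest internal ingredient is Lemma~\ref{lem:mubayiweight}, whose proof contains exactly the final step of your last branch (a pair-covering $\mc{F}$-free graph has all its blowups $\mc{F}$-free, hence Lagrangian at most $\pi(\mc{F})/r!$). Your proposal reconstructs the original Mubayi--Keevash density argument: clean so that every covered pair lies in $\geq \delta n^{r-2}$ edges, pass to a pair-covering induced subgraph $\mc{H}$ on the support of an optimal Lagrangian weighting, and then run the trichotomy $|U|\leq t$ / $\mc{H}\supseteq\mc{F}$ (greedily attach disjoint extension edges to build $\brm{Ext}(\mc{F}^{+(t+1)})$) / $\mc{H}$ is $\mc{F}$-free (blow up $\mc{H}$ to contradict $\pi(\mc{F})\leq r!\,t^{-r}{t\choose r}$). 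This outline is correct, the quantitative steps (the greedy attachment only needs to avoid $O_t(1)$ vertices, against $\delta n^{r-2}$ available edges per pair) go through, and it is a genuinely different, more elementary route than the stability machinery this paper develops for the exact statement Theorem~\ref{thm:mubayigen}; it is essentially how the cited density result was originally obtained, and it only yields the density, not the exact Tur\'an number.

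One claim in your lower bound is false as stated: for $r\geq 3$ the extension of a graph having an uncovered pair does \emph{not} cover pairs, since pairs involving the newly added vertices (with each other, or with old vertices outside their pair) are generally uncovered, so you cannot invoke ``every pair-covering subgraph of $\mc{K}^{(r)}_t(n)$ has at most $t$ vertices'' directly for $\brm{Ext}(\mc{F}^{+(t+1)})$. What is true, and is all your argument needs, is that every pair among the $t+1$ original vertices of $\mc{F}^{+(t+1)}$ is covered in $\brm{Ext}(\mc{F}^{+(t+1)})$ (by an edge of $\mc{F}$ or by its extension edge); hence in any embedding into a blowup of $\mc{K}^{(r)}_t$ these $t+1$ vertices would have to lie in pairwise distinct parts, which is impossible with $t$ parts. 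With that local repair the lower bound, and the whole argument, is sound.
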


We obtain the exact version of a slight weakening of Theorem~\ref{thm:keevash}. 

\begin{theorem}\label{thm:mubayigen} Let \(\mc{F}\) be an \(r\)-graph that covers pairs with \(\pl{v}(\mc{F})\leq t\). If \(\pi(\mc{F}) <r!t^{-r}{t\choose r}\) then there exists \(n_0\) such that \(\mc{K}^{(r)}_t(n)\) is the unique \(\brm{Ext}(\mc{F}^{+(t+1)})\)-free \(r\)-graph on \(n\) vertices with maximum number of edges for all \(n\geq n_0\).
\end{theorem}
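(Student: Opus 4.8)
Write $\pi := r!t^{-r}\binom{t}{r}$. By Theorem~\ref{thm:keevash}, applicable since $\mc{F}$ covers pairs with $\pl{v}(\mc{F})\le t\le t+1$ and $\pi(\mc{F})<\pi$, we have $\pi(\brm{Ext}(\mc{F}^{+(t+1)}))=\pi$. The plan is a stability-plus-cleaning argument extending that of \cite{gentriangle}. Two observations set it up. First, in $\mc{K}^{(r)}_t(n)$ a pair of vertices is covered if and only if its two vertices lie in different parts, so $\mc{K}^{(r)}_t(n)$ has no $t+1$ pairwise covered vertices, whereas the $t+1$ vertices of $\mc{F}^{+(t+1)}$ (the \emph{core} of any copy of $\brm{Ext}(\mc{F}^{+(t+1)})$) are pairwise covered; hence $\mc{K}^{(r)}_t(n)$ is $\brm{Ext}(\mc{F}^{+(t+1)})$-free and $\brm{ex}(n,\brm{Ext}(\mc{F}^{+(t+1)}))\ge|\mc{K}^{(r)}_t(n)|$. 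Second, the number of edges of a complete $t$-partite $r$-graph with part sizes $a_1,\ldots,a_t$ is the elementary symmetric polynomial $e_r(a_1,\ldots,a_t)$, which is continuous in the $a_i$ and, over integer partitions of $n$, is maximized uniquely by the balanced one (a standard smoothing argument). It therefore suffices to show that every $\brm{Ext}(\mc{F}^{+(t+1)})$-free $r$-graph $\mc{G}$ of large order $n$ with $|\mc{G}|\ge|\mc{K}^{(r)}_t(n)|$ is a complete $t$-partite $r$-graph; balance---hence $\mc{G}\cong\mc{K}^{(r)}_t(n)$---and uniqueness then follow from the uniqueness of that maximizer. I would run an induction on $n$: the inductive hypothesis $\brm{ex}(n-1,\brm{Ext}(\mc{F}^{+(t+1)}))=|\mc{K}^{(r)}_t(n-1)|$ lets one delete a vertex of an extremal $\mc{G}$ to deduce $\delta(\mc{G})\ge|\mc{K}^{(r)}_t(n)|-|\mc{K}^{(r)}_t(n-1)|=(1-o(1))\pi\binom{n-1}{r-1}$; the base case is covered by the weaker bound $\brm{ex}(m,\brm{Ext}(\mc{F}^{+(t+1)}))\le|\mc{K}^{(r)}_t(m)|+o(m^{r-1})$, valid for all large $m$ via the tools of \cite{gentriangle}, which already yields the same minimum-degree bound.

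The stability step is the classical ingredient. Using Theorem~\ref{thm:keevash} with supersaturation and the hypergraph removal lemma, and the fact that the balanced complete $t$-partite $r$-graph is the unique maximizer of the relevant Lagrangian, one shows: for every $\eps>0$ there are $\delta>0$ and $n_1$ such that any $\brm{Ext}(\mc{F}^{+(t+1)})$-free $r$-graph of order $n\ge n_1$ with at least $(\pi-\delta)\binom{n}{r}$ edges admits a partition $V(\mc{G})=V_1\cup\ldots\cup V_t$ with at most $\eps n^r$ \emph{non-transversal} edges, that is, edges meeting some $V_i$ in at least two vertices. Applied to an extremal $\mc{G}$ (which has $\ge|\mc{K}^{(r)}_t(n)|\ge(\pi-o(1))\binom{n}{r}$ edges), this yields such a partition; since $e_r$ is uniquely maximized at the balanced point, a short count forces $|V_i|=(1/t+o(1))n$ for each $i$, and then double counting against $\delta(\mc{G})$ shows that all but $o(n)$ vertices are \emph{good}: the link of a good vertex $v\in V_i$ differs, in only $o(n^{r-1})$ sets, from the link of a vertex of $\mc{K}^{(r)}_t(n)$, namely from the family of transversal $(r-1)$-sets disjoint from $V_i$.

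The heart of the proof, and the main obstacle, is a completion lemma turning any local defect into a forbidden subgraph. A first, iterated application upgrades ``most vertices are good'' to ``all vertices are good'': were some vertex $v_0$ not good, so that its link is $\Omega(n^{r-1})$-far from each of the $t$ possible target links, one could embed $\brm{Ext}(\mc{F}^{+(t+1)})$ with $v_0$ as one of its $t+1$ cores and the copy of $\mc{F}$ placed on $\pl{v}(\mc{F})\le t$ of the remaining $t$ cores, chosen generically among good vertices---contradicting $\brm{Ext}(\mc{F}^{+(t+1)})$-freeness. Granting this, suppose some edge $E\in\mc{G}$ is non-transversal, say $u,v\in E\cap V_1$. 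Choose one good vertex $w_i\in V_i$ for each $i=2,\ldots,t$, generically, so that $\{u,w_2,\ldots,w_t\}$ spans a complete $r$-graph $\mc{K}^{(r)}_t$ in $\mc{G}$---possible since $u$ and the $w_i$ are good, so their links are essentially complete $(t-1)$-partite $(r-1)$-graphs---and hence contains a copy of $\mc{F}$ on any $\pl{v}(\mc{F})$ of these $t$ vertices. The $t+1$ cores $u,v,w_2,\ldots,w_t$ are pairwise covered ($\{u,v\}$ by $E$, every other pair because both its vertices are good), each core pair other than $\{u,v\}$ lies in $\Omega(n^{r-2})$ edges of $\mc{G}$, and for $\{u,v\}$ one reserves the $r-2$ vertices of $E\setminus\{u,v\}$; one can therefore pick, greedily and on pairwise-disjoint fresh vertices, an extension edge for every core pair not already inside the copy of $\mc{F}$, producing a copy of $\brm{Ext}(\mc{F}^{+(t+1)})$, a contradiction. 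Hence $\mc{G}$ has no non-transversal edge, so $\mc{G}$ is a subgraph of a complete $t$-partite $r$-graph on $(V_1,\ldots,V_t)$ and $|\mc{G}|\le e_r(|V_1|,\ldots,|V_t|)\le|\mc{K}^{(r)}_t(n)|\le|\mc{G}|$; equality holds throughout, so $\mc{G}$ is the balanced complete $t$-partite $r$-graph, i.e.\ $\mc{G}\cong\mc{K}^{(r)}_t(n)$, and uniqueness of the balanced maximizer of $e_r$ finishes the proof.

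The completion lemma is the delicate point: uniformly over all pair-covering $\mc{F}$ with $\pl{v}(\mc{F})\le t$, it must simultaneously locate a copy of $\mc{F}$ on prescribed good cores---possible because $\pi(\mc{F})<\pi$ forces supersaturation of $\mc{F}$ and the transversal part of $\mc{G}$ is nearly complete---ensure that every core pair, including one trapped in a stray non-transversal edge, extends to enough edges, and realize all $\binom{t+1}{2}-\binom{\pl{v}(\mc{F})}{2}$ extensions on pairwise-disjoint fresh vertices, all from only an approximate structure for $\mc{G}$. Interlocking the stability structure, the minimum-degree bound, and supersaturation of $\mc{F}$ is precisely where the machinery of \cite{gentriangle} is used; the remaining estimates are routine once it is in place.
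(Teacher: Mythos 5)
Your plan has the right overall shape (approximate structure, minimum degree, then a cleaning/embedding step exploiting the pair-coverage of the core and the $r-2$ free vertices of extension edges), and in substance it parallels what the paper's machinery does. But the two places where you defer to "known" inputs are exactly where the problem-specific work lies, and as written they are gaps. First, the stability step. You justify it by supersaturation, removal, and "the fact that the balanced complete $t$-partite $r$-graph is the unique maximizer of the relevant Lagrangian". The statement actually needed is that among $\brm{Ext}(\mc{F}^{+(t+1)})$-free graphs that are closed under blowups (equivalently, avoid all weak extensions) \emph{and cover pairs}, the Lagrangian is stably maximized only by $\mc{K}_t^{(r)}$. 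That is not a generic fact about multipartite graphs; it is precisely where the hypothesis $\pi(\mc{F})<r!t^{-r}\binom{t}{r}$ must enter: such a graph on more than $t$ vertices can contain no copy of $\mc{F}$ (a copy of $\mc{F}$ plus arbitrary further vertices is a copy of $\mc{F}^{+(t+1)}$, since the added vertices are isolated), so its Lagrangian is at most $\pi(\mc{F})/r!<\lambda(\mc{K}_t^{(r)})$, while graphs on at most $t$ vertices are handled directly; this dichotomy is the paper's Lemma~\ref{lem:mubayiweight}. In your writeup the strict inequality is used only through Theorem~\ref{thm:keevash} and a mention of supersaturation of $\mc{F}$ inside the completion lemma, where it is not needed (you find $\mc{F}$ inside a clique on $t$ good vertices). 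Without the dichotomy the claimed stability statement is unsupported, and it is the reason the theorem is only a weakening of Keevash's result.

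Second, the step upgrading "most vertices good" to "all vertices good" does not follow as sketched. To embed $\brm{Ext}(\mc{F}^{+(t+1)})$ with a bad vertex $v_0$ as a core and one good core per part, you must realize an extension edge through every pair $\{v_0,w_j\}$, which requires $v_0$ to be (robustly) covered with good vertices of \emph{every} part. A vertex of large degree whose link is far from all $t$ target links need not have this property: its degree may be concentrated on non-transversal edges, or on transversal $(r-1)$-sets whose type pattern misses some parts without matching any single part. Repairing this needs (a) an argument that no edge, including edges at bad vertices, has two vertices in one part of the partition induced by the good vertices --- itself an embedding using the free vertices of the critical edge (the paper's Lemma~\ref{transversal}) --- and (b) a dichotomy for the $(r-1)$-uniform "type graph" on $[t]$ of robustly represented link types: the minimum degree forces at least $\binom{t-1}{r-1}$ types, so the type graph is either $\mc{K}_{t-1}^{(r-1)}$ (in which case $v_0$ is in fact close to that part's target link) or its edges cover every index, which is what lets you attach $v_0$ to all parts. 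This is exactly condition (iii) of the paper's $t$-spike definition and Claim~\ref{link} inside Lemma~\ref{links}; your sketch asserts the conclusion without confronting it. (A smaller issue of the same kind: the base-case bound $\brm{ex}(m,\cdot)\le|\mc{K}^{(r)}_t(m)|+o(m^{r-1})$ you invoke for the minimum-degree induction is itself nearly the theorem; a priori one only has the density bound with error $o(m^r)$, and extracting the degree condition is what the vertex-local-stability machinery is for.) Once these two ingredients are supplied, your argument essentially reconstructs the paper's proof, which, given the general machinery, consists only of verifying sharp $t$-criticality (Lemma~\ref{lem:mubayilocal}) and the Lagrangian dichotomy (Lemma~\ref{lem:mubayiweight}) and citing Corollary~\ref{cor:main} and Theorem~\ref{vertexlocalstab}.
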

 
Our proofs of Theorems~\ref{thm:trees} and~\ref{thm:mubayigen} mainly utilize the classical stability method introduced by Erd\H{o}s and Simonovits in~\cite{simonovits}. In fact, one of  our main technical results, Theorem~\ref{vertexlocalstab} can be considered as a hypergraph analogue of the result in~\cite{simonovits}, which states that for every   $t$-critical $2$-graph $G$ there exists $n_0$ such that the Tur\'an graph $K_t(n)$ is the unique $G$-free graph on $n$ vertices for every $n \geq n_0$. We also use several of the tools introduced in~\cite{gentriangle} to streamline stability and ``symmetrization" arguments.
 
The rest of this paper is organized as follows. In Section~\ref{sec:notation} we introduce methods from~\cite{gentriangle} and outline our approach. In Section~\ref{sec:localstab} we describe a large family of hypergraphs for which we are able to prove the upper bound on the number of edges locally, that is when the graph is close to the conjectured extremal example.  Sections~\ref{sec:trees} and~\ref{sec:mubayipikh}  contain the proofs of Theorem ~\ref{thm:trees} and~\ref{thm:mubayigen}, respectively. 

\section{Notation and Preliminary Results}
\label{sec:notation}
\subsection{Notation}
We adopt most of the notation and use several results from~\cite{gentriangle}. Let
$[n]=\{1,2,\ldots,n\}$. Let $X^{(r)}$ denote the set of all $r$ element subsets of a set $X$.
For an $r$-graph $\mc{F}$ and $v \in V(\mc{F})$,  the \emph{link of the vertex \(v\)} is defined as $$ L_{\mc{F}}(v):=\{I \in (V(\mc{F}))^{(r-1)} \: | \: I \cup \{v\} \in \mc{F} \}.$$ More generally, for  \(I\subseteq V(\mc{F})\) the \emph{link $L_{\mc{F}}(I)$  of \(I\)} is defined as 
$$L_{\mc{F}}(I) := \{J\subseteq V(\mc{F}) \: | \:  J \cap I = \emptyset, I \cup J\in \mc{F}\}.$$
We skip the index $\mc{F}$, whenever $\mc{F}$ is understood from the context. 

We say that an $r$-graph $\mc{G}$ is obtained from an $r$-graph $\mc{F}$ by \emph{cloning a vertex $v$ to a set $W$} if $\mc{F} \subseteq \mc{G}$, $V(\mc{G}) \setminus V(\mc{F}) = W\setminus\{v\}$, and $ L_{\mc{G}}(w)= L_{\mc{F}}(v)$ for every $w \in W$. We say that $\mc{G}$ is \emph{a blowup of $\mc{F}$} if $\mc{G}$ is isomorphic to an $r$-graph obtained from $\mc{F}$ by repeatedly cloning and deleting vertices. We denote the set of all blowups of $\mc{F}$ by $\mf{B}(\mc{F})$.

For a family of $r$-graphs $\mf{F}$, let 
$$m(\mf{F},n):=\max_{\substack{\mc{F} \in \mf{F} \\ \brm{v}({\mc{F}}) = n}} |\mc{F}|$$
denote the maximum number of edges in an $r$-graph in  $\mf{F}$ on $n$ vertices.

\subsection{Stability}
\label{sec:stability}
Let $\mf{F}$ and $\mf{H}$ be two families of $r$-graphs. We define \emph{the distance $d_{\mf{F}}(\mc{F})$ from an $r$-graph $\mc{F}$ to a family $\mf{F}$} as 
\[ d_{\mf{F}}(\mc{F}):=\min_{\substack{\mc{F'}\in\mf{F} \\ \brm{v}(\mc{F}) = \brm{v}(\mc{F}')}}{|\mc{F}\triangle\mc{F}'|}.\]

For $\eps, \alpha>0 $, we say that $\mf{F}$ is $(\mf{H}, \eps, \alpha)$-\emph{locally stable} if there exists $n_0 \in \mathbb{N}$ such that for all $\mc{F}\in\mf{F}$ with $\brm{v}(\mc{F}) =n \geq n_0$ and $d_{\mf{H}}(\mc{F})\leq \eps n^r$ we have
\begin{equation}\label{eq:localstability}
|\mc{F}|\leq m(\mf{H},n) - \alpha d_{\mf{H}}(\mc{F}).
\end{equation}
We say that $\mf{F}$ is $\mf{H}$-\emph{locally stable}  if $\mf{F}$ is $(\mf{H}, \eps, \alpha)$-locally stable for some choice of $\eps$ and $\alpha$. We say that $\mf{F}$ is $(\mf{H}, \alpha)$-\emph{stable} if it is $(\mf{H}, 1, \alpha)$-\emph{locally stable}, that is the inequality (\ref{eq:localstability}) holds for all  $\mc{F}\in\mf{F}$ with $\brm{v}(\mc{F}) =n \geq n_0$. We say that \(\mf{F}\) is \(\mf{H}\)-stable, if \(\mf{F}\) is \((\mf{H}, \alpha)\)-stable for some choice of \(\alpha\). We refer the reader to~\cite{gentriangle} for the detailed discussion of this notion of stability and its differences from the classical definitions. 

For $\eps, \alpha>0$, we say that a family $\mf{F}$ of $r$-graphs is  $(\mf{H}, \eps, \alpha)$-\emph{vertex locally stable} if there exists $n_0 \in \mathbb{N}$ such that for all $\mc{F}\in\mf{F}$ with $\brm{v}(\mc{F}) =n \geq n_0$,  $d_{\mf{H}}(\mc{F})\leq \eps n^{r}$, and 
$| L_{\mc{F}}(v)| \geq r(1-\eps)m(\mf{H},n)/n$  for every $v \in V(\mc{F})$, we have
\[|\mc{F}|\leq m(\mf{H},n) - \alpha d_{\mf{H}}(\mc{F}).\]
We say that $\mf{F}$ is  $\mf{H}$-\emph{vertex locally stable} if $\mf{F}$ is  $(\mf{H}, \eps, \alpha)$-vertex locally stable for some $\eps,\alpha$. 
It is shown in~\cite{gentriangle} that vertex local stability implies local stability under mild conditions.

\begin{theorem}[{\cite[Theorem 3.1]{gentriangle}}]\label{localfromvertexlocal} Let \(\mf{F}, \mf{H}\) be two families of \(r\)-graphs such that \(\mf{H}\) is clonable. If \(\mf{F}\) is \(\mf{H}\)-vertex locally stable, then \(\mf{F}\) is \(\mf{H}\)-locally stable.
\end{theorem}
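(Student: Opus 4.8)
Proof proposal for Theorem~\ref{localfromvertexlocal} (the final statement, \cite[Theorem 3.1]{gentriangle}).

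The plan is to bootstrap from the vertex local stability hypothesis to full local stability by removing "bad" vertices one at a time — those with small link — and then showing that this removal process cannot go on for too long without contradicting the density assumption, so that what remains is an induced subgraph to which the vertex-local-stability hypothesis applies directly.

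First I would set up the parameters. Suppose $\mf{F}$ is $(\mf{H},\eps_0,\alpha_0)$-vertex locally stable. I would choose $\eps \ll \eps_0$ and $\alpha \ll \alpha_0$ suitably (with $\eps$ small enough that the various error terms below are controlled), and let $\mc{F} \in \mf{F}$ with $\brm{v}(\mc{F}) = n$ large and $d_{\mf{H}}(\mc{F}) \le \eps n^r$. Starting from $\mc{F}_n := \mc{F}$, I repeatedly delete a vertex of minimum degree: given $\mc{F}_m$ on $m$ vertices, if it has a vertex $v$ with $|L_{\mc{F}_m}(v)| < r(1-\eps_0)m(\mf{H},m)/m$, delete it to obtain $\mc{F}_{m-1}$; otherwise stop. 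Let $\mc{F}_k$ be the $r$-graph obtained at the end, on $k$ vertices. The key point is that $m(\mf{H},m)/\binom{m}{r}$ is (asymptotically, and by clonability essentially monotonically) the Tur\'an density-type quantity $\pi(\mf{H})$ — or more precisely, clonability of $\mf{H}$ gives that $\mc{K}^{(r)}_{?}$-type blowups lie in $\mf{H}$ and forces $m(\mf{H},m) \ge (1-o(1))\pi(\mf{H})\binom{m}{r}$ while the extremal construction gives $m(\mf{H},m) \le \pi(\mf{H})\binom{m}{r}$; I would use clonability precisely to guarantee $m(\mf{H},m)$ grows like $c\binom{m}{r}$ with $c = \pi(\mf{H})$ and that blowing up an extremal $\mf{H}$-graph on $k$ vertices to $n$ vertices lies in $\mf{H}$, so $m(\mf{H},n) \ge m(\mf{H},k)(n/k)^r(1-o(1))$.

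Next I bound how many vertices get deleted. Each deletion step removes fewer than $r(1-\eps_0)m(\mf{H},m)/m \approx r(1-\eps_0)c\binom{m}{r}/m \approx (1-\eps_0)c\binom{m-1}{r-1}$ edges, whereas an extremal graph on $m$ vertices minus an extremal graph on $m-1$ vertices differs by about $c\binom{m-1}{r-1}$ edges. So each deletion brings $|\mc{F}_m|$ down by a factor-$(1-\eps_0)$ amount \emph{less} than the drop in $m(\mf{H},\cdot)$; hence if we deleted $n-k$ vertices, then $|\mc{F}| \le |\mc{F}_k| + \sum (\text{deleted degrees}) \le |\mc{F}_k| + (1-\eps_0)(m(\mf{H},n)-m(\mf{H},k))$. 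Now if $k$ is small (say $k \le \delta n$ for suitable $\delta$), combine this with $|\mc{F}_k| \le \binom{k}{r}$ and $m(\mf{H},n)\ge (c-o(1))\binom{n}{r}$ to get $|\mc{F}| \le m(\mf{H},n) - \Omega(n^r) < m(\mf{H},n) - \alpha d_{\mf{H}}(\mc{F})$ since $d_{\mf{H}}(\mc{F}) \le \eps n^r$ — done in this case. So I may assume $k \ge \delta n$, i.e., $k$ is linear in $n$.

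Now apply vertex local stability to $\mc{F}_k$: by construction every vertex of $\mc{F}_k$ has link of size at least $r(1-\eps_0)m(\mf{H},k)/k$, and I need $d_{\mf{H}}(\mc{F}_k) \le \eps_0 k^r$, which follows because deleting $n-k$ vertices changes the distance to $\mf{H}$ by at most the number of edges on those vertices plus a blowup-comparison term — here I would use that an extremal $\mf{H}$-graph on $k$ vertices blows up into $\mf{H}$, so $d_{\mf{H}}(\mc{F}_k) \le d_{\mf{H}}(\mc{F}) + O(\eps n^{r})/(\text{blowup factor}) \le \eps_0 k^r$ for $\eps$ small. Vertex local stability then yields $|\mc{F}_k| \le m(\mf{H},k) - \alpha_0 d_{\mf{H}}(\mc{F}_k)$. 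Finally, feed this back: $|\mc{F}| \le |\mc{F}_k| + (1-\eps_0)(m(\mf{H},n)-m(\mf{H},k)) \le m(\mf{H},n) - \eps_0(m(\mf{H},n)-m(\mf{H},k)) - \alpha_0 d_{\mf{H}}(\mc{F}_k)$. Since $m(\mf{H},n)-m(\mf{H},k) \ge (c-o(1))(\binom{n}{r}-\binom{k}{r}) = \Omega(n^r)$ when $k \le (1-\delta')n$, and separately $d_{\mf{H}}(\mc{F}) \le d_{\mf{H}}(\mc{F}_k) + (\text{edges on deleted vertices})$ with the latter also $O(\eps_0(m(\mf{H},n)-m(\mf{H},k)))$, one checks that $\eps_0(m(\mf{H},n)-m(\mf{H},k)) + \alpha_0 d_{\mf{H}}(\mc{F}_k) \ge \alpha d_{\mf{H}}(\mc{F})$ for an appropriately small $\alpha$; if instead $k \ge (1-\delta')n$ so almost no vertices were deleted, then the deleted edges number $O(\delta' n^r)$ and again $|\mc{F}| \le m(\mf{H},n) - \alpha d_{\mf{H}}(\mc{F})$ after absorbing. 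In every case we get \eqref{eq:localstability}, proving $\mf{F}$ is $\mf{H}$-locally stable.

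The main obstacle is making the two regimes (linearly many deleted vertices versus almost none) fit together with a single uniform choice of $\alpha$, and — more delicately — controlling how $d_{\mf{H}}$ behaves under vertex deletion: we need that $d_{\mf{H}}(\mc{F}_k)$ is not much smaller than $d_{\mf{H}}(\mc{F})$ minus the removed edges, which is exactly where clonability of $\mf{H}$ is essential, since it lets us compare a closest $\mf{H}$-graph to $\mc{F}_k$ against a closest $\mf{H}$-graph to $\mc{F}$ by blowing the former up. Getting the constants to line up so that the "gain" $\eps_0(m(\mf{H},n)-m(\mf{H},k))$ from the deletion steps dominates any loss in the distance term is the crux of the argument.
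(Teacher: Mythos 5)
The paper does not actually prove this statement---it is imported from \cite[Theorem 3.1]{gentriangle}---but your argument (iteratively deleting vertices of degree below $r(1-\eps_0)m(\mf{H},m)/m$, using clonability both to lower-bound the increments $m(\mf{H},m)-m(\mf{H},m-1)$ by cloning a maximum-degree vertex and to transfer the distance bounds between $\mc{F}$ and the pruned graph $\mc{F}_k$ via blowups, then splitting on whether linearly many vertices survive) is essentially the same deletion argument used in the cited source, and it is sound. The only remaining work is the constant bookkeeping you yourself flag, plus the observation that a clonable family containing any edge automatically satisfies $m(\mf{H},m)\geq (r!/r^r)\binom{m}{r}(1-o(1))$ and that $m(\mf{H},m)/\binom{m}{r}$ is non-increasing (by closure under vertex deletion), which justifies the density comparisons in your per-step estimate.
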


\subsection{Lagrangians and weighted stability}\label{sec:weighted}

For $\mc{F}$ an $r$-graph $\mc{F}$ let $\mc{M}(\mc{F})$ denote the set of probability distributions on $V(\mc{F})$, that is, the set of functions $\mu: V(\mc{F}) \to [0,1]$ such that $\sum_{v \in V(\mc{F})}\mu(v)=1$. We call a pair $(\mc{F},\mu)$, where $\mu \in \mc{M}(\mc{F})$, a \emph{weighted  graph}.  Two weighted graphs $(\mc{F},\mu)$ and $(\mc{F}',\mu')$ are \emph{isomorphic} if there exists an isomorphism $\varphi: V(\mc{F}) \to V(\mc{F}')$ between $\mc{F}$ and $\mc{F}'$ such that $\mu'(\varphi(v))=\mu(v)$ for every $v \in V(\mc{F})$. As in the case of unweighted graphs, we generally do not distinguish between isomorphic weighted graphs. 

We define \emph{the density $\lambda(\mc{F},\mu)$ of a weighted graph  $(\mc{F},\mu)$}, by  $$\lambda(\mc{F},\mu):=\sum_{F \in \mc{F} }{\prod_{v \in F}\mu(v)}.$$ The \emph{Lagrangian $\lambda(\mc{F})$ of an $r$-graph $\mc{F}$} is defined by $$\lambda(\mc{F}):=\max_{\mu \in \mc{M}(\mc{F})}\lambda(\mc{F},\mu).$$ For a family of $r$-graphs $\mf{F}$, let $\lambda(\mf{F}):=\sup_{\mc{F} \in \mf{F}}\lambda(\mc{F})$.

If an $r$-graph $\mc{F}'$ is obtained from an $r$-graph $\mc{F}$ by cloning a vertex $u \in V(\mc{F})$ to a set $W$, $\mu \in \mc{M}(\mc{F})$, $\mu' \in \mc{M}(\mc{F'})$, then we say that $(\mc{F}', \mu')$ is \emph{a one vertex blowup of 
$(\mc{F}, \mu)$}, if $\mu(v)=\mu'(v)$ for all $v \in V(\mc{F}) \setminus \{u\}$ and $\mu(u)=\sum_{w \in W}\mu'(w)$. We say that  $(\mc{F}', \mu')$ is \emph{a blowup} of $(\mc{F}, \mu)$ if  $(\mc{F}', \mu')$ is isomorphic to a weighted $r$-graph which can be obtained from $(\mc{F}, \mu)$ by repeatedly taking one vertex blowups. We denote by $\mf{B}(\mc{F},\mu)$ the family of weighted graphs isomorphic to the blowups of $(\mc{F},\mu)$. 

Next we  define the distance between weighted graphs. If $\mc{F}_1,\mc{F}_2$ are two $r$-graphs such that $V(\mc{F}_1)=V(\mc{F}_2)$
and $\mu \in \mc{M}(\mc{F}_1)(=\mc{M}(\mc{F}_2))$, we define
$$d'(\mc{F}_1,\mc{F}_2,\mu):=\sum_{F \in \mc{F}_1 \triangle \mc{F}_2}\prod_{v\in F}{\mu(v)}.$$ 
 We define \emph{the distance between general weighted $r$-graphs $(\mc{F}_1, \mu_1)$ and $(\mc{F}_2, \mu_2)$}, as $$d((\mc{F}_1, \mu_1), (\mc{F}_2, \mu_2)):=\inf d'(\mc{F}'_1,\mc{F}'_2,\mu),$$
where the infimum is taken over all $r$-graphs $\mc{F}'_1,\mc{F}'_2,$ with  $V(\mc{F}'_1)=V(\mc{F}'_2)$ and  $\mu \in \mc{M}(\mc{F}_1')=\mc{M}(\mc{F}_2')$ satisfying $(\mc{F}'_i,\mu) \in \mf{B}(\mc{F}_i,\mu_i)$ for $i=1,2$. If $(\mc{F}, \mu)$ is a weighted $r$-graph  and $\mf{F}$ is a family of $r$-graphs we define \emph{the distance  from  $(\mc{F}, \mu)$ to $\mf{F}$} as $$d_{\mf{F}}(\mc{F}, \mu) =\inf_{\mc{F'} \in \mf{F}, \mu' \in \mc{M}(\mc{F}')}d((\mc{F}, \mu), (\mc{F}', \mu')).$$ 
We  say that $\mf{F}$ is $\mf{H}$-\emph{weakly weight stable} if for every $\eps>0$ there exists $\delta>0$ such that for every $\mc{F} \in \mf{F}$ and $\mu \in \mc{M}(\mc{F})$ if $\lambda(\mc{F},\mu) \geq \lambda(\mf{H})-\delta$, then  $d_{\mf{H}}(\mc{F}, \mu) \leq \eps$. 
The next result shows that a combination of weighted stability for a restriction of $\mf{F}$ and local stability implies stability for clonable families.

\begin{theorem}[{\cite[Theorem 5.1]{gentriangle}}]\label{stabilityfromlocalstability} Let \(\mf{F}, \mf{H}\) be two clonable families of \(r\)-graphs. Let \(\mf{F}^*\) consist of all \(r\)-graphs in \(\mf{F}\) that cover pairs. If \(\mf{F}^*\) is \(\mf{H}\)-weakly weight stable, \(\mf{F}\) is \(\mf{H}\)-locally stable, then \(\mf{F}\) is \(\mf{H}\)-stable.
\end{theorem}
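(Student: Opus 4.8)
Let $(\eps_0,\alpha_0)$ be parameters witnessing that $\mf F$ is $\mf H$-locally stable. Since $d_{\mf H}(\mc F)\le\binom{\brm v(\mc F)}{r}$ for every $r$-graph, it suffices to produce $c>0$ and $n_0$ such that $|\mc F|\le m(\mf H,n)-cn^r$ for every $\mc F\in\mf F$ with $\brm v(\mc F)=n\ge n_0$ and $d_{\mf H}(\mc F)>\eps_0 n^r$. Indeed, with $\alpha:=\min\{\alpha_0,\,c\cdot r!\}$ the inequality $|\mc F|\le m(\mf H,n)-\alpha\, d_{\mf H}(\mc F)$ then holds when $d_{\mf H}(\mc F)\le\eps_0 n^r$ by local stability, and when $d_{\mf H}(\mc F)>\eps_0 n^r$ because $\alpha\, d_{\mf H}(\mc F)\le\alpha n^r/r!\le cn^r$; so $\mf F$ is $(\mf H,\alpha)$-stable.

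Suppose no such $c$ exists. Then there is a sequence $\mc F_k\in\mf F$ with $n_k:=\brm v(\mc F_k)\to\infty$, with $d_{\mf H}(\mc F_k)>\eps_0 n_k^r$, and with $|\mc F_k|\ge m(\mf H,n_k)-n_k^r/k$. Since $\mf H$ is clonable, blowing up near-optimal weightings of its members gives $m(\mf H,n)/n^r\to\lambda(\mf H)$. Also $\lambda(\mf F)\le\lambda(\mf H)$: every $r$-graph has an optimal weighting whose support induces a pair-covering subgraph (one may move weight off a vertex of any uncovered pair without decreasing $\lambda$, as $\lambda$ is affine along such a segment), that subgraph lies in $\mf F^*$ because $\mf F$ is clonable, and weak weight stability of $\mf F^*$ forces its Lagrangian to be at most $\lambda(\mf H)$. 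Combining these with $|\mc F_k|/n_k^r=\lambda(\mc F_k,\mu_k^{\mathrm{unif}})\le\lambda(\mc F_k)\le\lambda(\mf H)$, where $\mu_k^{\mathrm{unif}}$ is the uniform distribution on $V(\mc F_k)$, yields $\lambda(\mc F_k)\to\lambda(\mf H)$.

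Now reduce to the pair-covering case. Let $\mu_k^*$ be an optimal weighting of $\mc F_k$ with inclusion-minimal support $S_k$, so that $\mc F_k^*:=\mc F_k[S_k]$ covers pairs; then $\mc F_k^*\in\mf F^*$ by clonability of $\mf F$, and $\lambda(\mc F_k^*,\mu_k^*|_{S_k})=\lambda(\mc F_k)\to\lambda(\mf H)$. Weak weight stability of $\mf F^*$ then gives $d_{\mf H}(\mc F_k^*,\mu_k^*|_{S_k})\to 0$; equivalently, after rounding $\mu_k^*|_{S_k}$ to rational weights and taking the corresponding blowup $\mc G_k$ of $\mc F_k^*$ (so $\mc G_k\in\mf B(\mc F_k)\subseteq\mf F$ since $\mf F$ is clonable), one obtains $\brm v(\mc G_k)\to\infty$ with $|\mc G_k|=(\lambda(\mf H)-o(1))\,\brm v(\mc G_k)^r$ and $d_{\mf H}(\mc G_k)=o(\brm v(\mc G_k)^r)$, i.e.\ the pair-covering core of $\mc F_k$ is, after blowup, genuinely close to $\mf H$.

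The final and main obstacle is to feed this back into a contradiction with $d_{\mf H}(\mc F_k)>\eps_0 n_k^r$: the above does not suffice on its own, because $\mc G_k$ is built from only the core of $\mc F_k$ and the optimal weighting $\mu_k^*$ may be far from uniform, so closeness of $\mc F_k^*$ to $\mf H$ does not transfer directly to $\mc F_k$. The plan is to symmetrize $\mc F_k$ itself: while some pair of vertices is uncovered, clone the endpoint of larger degree and delete the other — an operation that keeps $\brm v$ fixed, does not decrease the number of edges, and (by applying the same clone-and-delete to a nearest member of $\mf H$) changes $d_{\mf H}$ by only $O(n_k^{r-1})$ — until the graph becomes a blowup of a pair-covering member $\mc F_k^0$ of $\mf F$; since the symmetrized graph is still near-extremal, $\lambda(\mc F_k^0)\ge\lambda(\mf H)-o(1)$, so weak weight stability applies to $\mc F_k^0$, and one descends back, using local stability and its vertex-local refinement (Theorem~\ref{localfromvertexlocal}) to handle vertices of small degree. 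Making this rigorous requires bounding the number of symmetrization steps by $o(n_k)$ — for which one uses that a near-extremal $r$-graph cannot decompose into more than a bounded number of mutually pair-uncovered parts, while controlling the cascade of pairs that become uncovered when vertices are deleted — and verifying that the process terminates. This bookkeeping, where the clonability hypotheses on both $\mf F$ and $\mf H$ and the full strength of local stability are used, is the real work; once it is carried out one gets $d_{\mf H}(\mc F_k)=o(n_k^r)$ for large $k$, contradicting $d_{\mf H}(\mc F_k)>\eps_0 n_k^r$.
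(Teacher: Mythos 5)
Your first three paragraphs are sound and correspond to the standard opening moves: reducing stability to a uniform deficit $|\mc F|\le m(\mf H,n)-cn^r$ for graphs with $d_{\mf H}(\mc F)>\eps_0 n^r$, the facts $m(\mf H,n)/n^r\to\lambda(\mf H)$ and $\lambda(\mf F^*)\le\lambda(\mf H)$ (the latter does follow from weak weight stability, since the weighted distance dominates the difference of densities), and the Frankl--R\'odl reduction to a pair-covering induced subgraph carrying an optimal weighting, to which weak weight stability applies. The problem is that everything after that --- which you yourself flag as ``the real work'' --- is precisely the content of the theorem, and the plan you sketch for it does not hold up. You propose to symmetrize $\mc F_k$ by clone-and-delete steps along uncovered pairs and to transfer closeness to $\mf H$ back through the chain, which requires the number of steps to be $o(n_k)$ since each step can move $d_{\mf H}$ by $\Theta(n_k^{r-1})$. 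But to reach a blowup of a pair-covering graph one must make every uncovered pair into a pair of twins, and generically this takes $\Theta(n)$ steps even for near-extremal graphs (perturb a balanced blowup of $\mc K_t^{(r)}$ so that no two vertices in a part are twins: every such pair is uncovered and must be merged separately); far-ness from $\mf H$ does nothing to reduce this count. Your proposed control via ``boundedly many mutually pair-uncovered parts'' bounds the wrong quantity --- the number of steps is governed by non-twin uncovered pairs, not by a coarse decomposition --- so the cumulative drift in $d_{\mf H}$ is $\Theta(n^r)$ and cannot be absorbed. Moreover, the target of the back-transfer is mischosen: you do not need (and should not expect) $d_{\mf H}(\mc F_k)=o(n_k^r)$; what the contradiction requires is an edge deficit $|\mc F_k|\le m(\mf H,n_k)-cn_k^r$ for far graphs, and your argument never produces one --- knowing that the symmetrized graph $\tilde{\mc F}_k$ is close to $\mf H$ and satisfies $|\tilde{\mc F}_k|\ge|\mc F_k|$ only bounds $|\mc F_k|$ by $m(\mf H,n_k)$, with no loss term attached to the distance of the \emph{original} graph.

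Two smaller points. The appeal to Theorem~\ref{localfromvertexlocal} ``to handle vertices of small degree'' is not a legitimate use of that statement: it converts vertex-local stability of $\mf F$ into local stability and is an input to Corollary~\ref{cor:main}, not a device for cleaning up low-degree vertices inside this proof; if you intend a minimum-degree deletion argument you must set it up explicitly and track how deletions interact with the deficit you are trying to create. Also note that the paper does not prove this theorem --- it imports it from~\cite{gentriangle} --- so the proof cannot be waved off as bookkeeping; the step you defer (converting weighted closeness of the pair-covering core into quantitative information about the original, possibly far, unweighted graph) is exactly where the hypotheses of clonability and local stability must be combined in a nontrivial way, and as written your proposal does not establish it.
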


\subsection{Generic part of the proof}
\label{sec:genoutline}

In this subsection we present the part of the proof which is common to Theorems~\ref{thm:trees} and~\ref{thm:mubayigen} and is likely to be applicable to other problems on Tur\'an numbers of extensions. 
Let $\mf{F}$ be an $r$-graph family defined by a collection of forbidden subgraphs. Our general goal is to determine $m(\mf{F},n)$ for large $n$ by showing that $m(\mf{F},n)=m(\mf{H},n)$ , where  $\mf{H} \subseteq \mf{F}$ is a structured clonable family of conjectured extremal  examples. In particular, it suffices to show that $\mf{F}$ is $\mf{H}$-stable.

If  \(\mf{F}\) is clonable then by Theorem~\ref{stabilityfromlocalstability} it suffices to prove that \(\mf{F}\) is $\mf{H}$-locally stable  and that the subfamily \(\mf{F}^*\) is $\mf{H}$-weakly weight stable. 
However, the family $\mf{F}$ is typically not clonable. In this section we  overcome this obstacle. 

The trick is to consider a clonable subfamily of  \(\mf{F}\) instead. We define \emph{the core of $\mf{F}$} to be the maximum subfamily of \(\mf{F}\) closed under blowups: $\brm{core}(\mf{F}) = \{\mc{F}\in \mf{F}\:|\: \mf{B}(\mc{F}) \subseteq \mf{F}\}.$

We need the following  corollary of the Hypergraph Removal Lemma by R\H{o}dl, Skokan~\cite{removallem} and a classical result of Erd\H{o}s~\cite{erdos}. 

\begin{lem}[\cite{removallem}] \label{removallem} For every \(r\)-graph \(\mc{G}\) and \(\eps>0\) there exists \(\delta>0\) such that every \(r\)-graph on \(n\) vertices which contains at most \(\delta n^{\pl{v}(\mc{G})}\) copies of \(\mc{G}\) can be made \(\mc{G}\)-free by removing at most \(\eps n^{r}\) edges. 
\end{lem}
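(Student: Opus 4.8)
Lemma~\ref{removallem} is, up to routine reductions, the Hypergraph Removal Lemma, so the plan is not to reprove that deep result but to indicate how it yields the stated form and where Erd\H{o}s's theorem enters. First I would reduce to the convenient case. If $n < \pl{v}(\mc{G})$ there is nothing to prove, so $n$ may be taken large. If $\mc{G}$ has $c \geq 1$ isolated vertices, let $\mc{G}^-$ be $\mc{G}$ with those vertices deleted; for $n \geq \pl{v}(\mc{G})$ an $r$-graph $\mc{H}$ on $n$ vertices contains $\mc{G}$ if and only if it contains $\mc{G}^-$, and the number of copies of $\mc{G}$ in $\mc{H}$ is at least $\Omega_{\mc{G}}(n^{c})$ times the number of copies of $\mc{G}^-$ (extend a copy of $\mc{G}^-$ by $c$ further vertices, up to a bounded overcount). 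So ``at most $\delta n^{\pl{v}(\mc{G})}$ copies of $\mc{G}$'' gives ``at most $O_{\mc{G}}(\delta)\, n^{\pl{v}(\mc{G}^-)}$ copies of $\mc{G}^-$'', and destroying all copies of $\mc{G}^-$ destroys all copies of $\mc{G}$; thus I may assume $\mc{G}$ has no isolated vertex.

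For $\mc{G}$ with no isolated vertex, I would run the hypergraph regularity method of R\"odl and Skokan~\cite{removallem}: regularize $\mc{H}$ with parameters (depending only on $\mc{G}$ and $\eps$) fine enough for the attendant counting lemma, then delete every edge lying in an irregular block, in a block of density below a fixed threshold, or inside a single part; standard estimates bound the total deletion by $\eps n^{r}$. Were a copy of $\mc{G}$ to survive in the resulting $r$-graph $\mc{H}'$, its $\pl{v}(\mc{G})$ vertices would occupy parts joined only by dense regular blocks, whence the counting lemma would force at least $\delta n^{\pl{v}(\mc{G})}$ copies of $\mc{G}$ in $\mc{H}$ for some $\delta = \delta(\mc{G},\eps) > 0$, contradicting the hypothesis. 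Hence $\mc{H}'$ is $\mc{G}$-free. Erd\H{o}s's bound~\cite{erdos} on Tur\'an numbers of complete $r$-partite $r$-graphs is the classical input used to guarantee that positive-density configurations are genuinely rich rather than carried by a bounded number of vertices --- equivalently, it supplies the supersaturation estimate that turns ``$\mc{H}$ is far from $\mc{G}$-free'' into ``$\mc{H}$ has $\Omega(n^{\pl{v}(\mc{G})})$ copies of $\mc{G}$'' with the constant independent of $n$.

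The only genuine obstacle is the one that cannot be sidestepped: the argument rests entirely on the hypergraph regularity lemma and its counting lemma, which are the hard, cited ingredients. Given those, the one point demanding care --- and the reason Erd\H{o}s's theorem is needed rather than a naive averaging argument --- is that $\delta$ must depend on $\mc{G}$ and $\eps$ alone, never on $n$; everything else is bookkeeping.
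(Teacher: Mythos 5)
The paper offers no proof of this lemma at all: it is quoted verbatim as (a corollary of) the Hypergraph Removal Lemma of R\"odl and Skokan and used as a black box, so there is nothing internal to compare your argument against. Your outline is the standard regularity-method proof of that cited result --- the reduction to the case of no isolated vertices is fine, and the ``regularize, clean irregular/sparse/internal edges, apply the counting lemma to a surviving copy'' scheme is exactly how the removal lemma is proved, with the regularity and counting lemmas themselves remaining the cited heavy machinery, just as in the paper. One correction, though: your account of where Erd\H{o}s's theorem enters is misplaced. In the paper, Erd\H{o}s's bound on Tur\'an numbers of complete $r$-partite $r$-graphs is the ingredient behind the \emph{separate} Corollary~\ref{cor:erdos} (a $\mc{B}$-free graph, $\mc{B}$ a blowup of $\mc{H}$, contains at most $\delta n^{\pl{v}(\mc{H})}$ copies of $\mc{H}$); it plays no role in Lemma~\ref{removallem} itself. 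Moreover, the statement you attribute to it --- ``far from $\mc{G}$-free implies $\Omega(n^{\pl{v}(\mc{G})})$ copies'' --- is just the contrapositive of the removal lemma (supersaturation), which is the conclusion, not an input; the independence of $\delta$ from $n$ comes from the regularity and counting lemmas alone. This confusion does not invalidate your sketch of the lemma in question, but you should not claim Erd\H{o}s's theorem is needed for it.
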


\begin{corollary}[\cite{erdos}]\label{cor:erdos} For every \(r\)-graph \(\mc{H}\), a blowup $\mc{B} \in \mf{B}(\mc{H})$ of $\mc{H}$ and $\delta >0$ there exists  \(n_0\) such that any \(r\)-graph on \(n\geq n_0\) vertices that does not contain  \(\mc{B}\) contains at most $\delta n^{\pl{v}(\mc{H})}$ many copies of \(\mc{H}\).
\end{corollary}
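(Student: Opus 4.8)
\emph{Proof proposal.} The plan is to argue by contraposition: writing $h:=\pl{v}(\mc{H})$ and $V(\mc{H})=\{u_1,\dots,u_h\}$, I will show that if $\mc{G}$ is an $r$-graph on $n$ vertices containing more than $\delta n^{h}$ copies of $\mc{H}$, then $\mc{G}$ contains $\mc{B}$ once $n$ is large enough. First I reduce to a balanced blowup: since $\mc{B}\in\mf{B}(\mc{H})$, there is an integer $s=s(\mc{H},\mc{B})$ such that $\mc{B}$ is a subgraph of the blowup $\mc{H}[s]$ of $\mc{H}$ obtained by cloning each vertex to an independent set of size $s$. Hence it suffices to find pairwise disjoint $T_1,\dots,T_h\subseteq V(\mc{G})$ with $|T_i|=s$ such that, for every transversal $x_i\in T_i$ ($i\in[h]$), the map $u_i\mapsto x_i$ is an embedding of $\mc{H}$ into $\mc{G}$; such a configuration is precisely a copy of $\mc{H}[s]\supseteq\mc{B}$. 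To locate it, I form the auxiliary $h$-uniform, $h$-partite hypergraph $\mc{A}$ with parts $V_1,\dots,V_h$, each a disjoint copy of $V(\mc{G})$, where a transversal $\{x_1,\dots,x_h\}$, $x_i\in V_i$, is an edge of $\mc{A}$ exactly when the corresponding vertices of $\mc{G}$ are distinct and $u_i\mapsto x_i$ embeds $\mc{H}$ in $\mc{G}$. Each copy of $\mc{H}$ in $\mc{G}$ contributes at least one edge, so $|\mc{A}|\ge \delta n^{h}$.

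Next I apply the classical theorem of Erd\H{o}s~\cite{erdos} on Tur\'an numbers of complete multipartite hypergraphs, in its $h$-partite $h$-uniform form: an $h$-partite $h$-graph whose parts have size $n$ and which has at least $\delta n^{h}$ edges contains a complete $h$-partite sub-$h$-graph with all $h$ parts of size $hs$, provided $n\ge n_0(\delta,h,s)$. Let $S_1\subseteq V_1,\dots,S_h\subseteq V_h$ be such parts and let $T_i^{0}\subseteq V(\mc{G})$ be the image of $S_i$ under the identification with $V(\mc{G})$; then $|T_i^{0}|=hs$ and \emph{every} transversal of $(T_1^{0},\dots,T_h^{0})$ is an embedding of $\mc{H}$ into $\mc{G}$. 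The sets $T_i^{0}$ need not be disjoint as subsets of $V(\mc{G})$, so I thin them greedily: for $i=1,\dots,h$ choose $T_i\subseteq T_i^{0}\setminus(T_1\cup\dots\cup T_{i-1})$ of size $s$, which is possible because $|T_i^{0}|=hs\ge (i-1)s+s$. The resulting $T_1,\dots,T_h$ meet the requirements of the first paragraph, so $\mc{G}\supseteq\mc{H}[s]\supseteq\mc{B}$, which contradicts $\mc{B}$-freeness and proves the corollary, with $n_0$ depending only on $\mc{H}$, $\mc{B}$, and $\delta$.

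The only substantive ingredient is the Erd\H{o}s bound $\brm{ex}(m,K^{(h)}_{t,\dots,t})=o(m^{h})$ for complete $h$-partite $h$-graphs; the auxiliary hypergraph, the reduction to $\mc{H}[s]$, and the greedy disjointification are routine bookkeeping. The one point warranting a sentence of care is that the $h$-partite version invoked above follows from the standard non-partite statement of Erd\H{o}s's theorem: applying the latter to $\mc{A}$ viewed as an $h$-graph on $hn$ vertices yields a complete $h$-partite subhypergraph whose parts are each easily seen to lie inside a single $V_i$ (since every edge of $\mc{A}$ is a transversal of $(V_1,\dots,V_h)$), after which one relabels so that the $i$-th part is contained in $V_i$.
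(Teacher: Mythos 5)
Your proof is correct, and it is essentially the argument the paper appeals to: the corollary is stated without an in-text proof, as a consequence of Erd\H{o}s's classical theorem that $\brm{ex}(m,K^{(h)}_{t,\dots,t})=o(m^{h})$ for complete $h$-partite $h$-uniform hypergraphs, and your auxiliary $h$-partite ``embedding'' hypergraph plus the reduction of $\mc{B}$ to a balanced blowup $\mc{H}[s]$ is exactly the standard deduction. One minor remark: since your edges of $\mc{A}$ require the underlying vertices of $\mc{G}$ to be distinct, the parts of any complete $h$-partite subhypergraph of $\mc{A}$ are automatically pairwise disjoint as subsets of $V(\mc{G})$, so the greedy disjointification step is redundant (though harmless).
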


For a family $\mf{G}$, let $\brm{Forb}(\mf{G})$ denote the family of all $\mf{G}$-free $r$-graphs.
We deduce the following. 

\begin{lem}\label{stabcore} Let \(\mf{G}\) be a finite family of \(r\)-graphs, and let \(\mf{F}=\brm{Forb}(\mf{G})\). Then for every \(\eps>0\) there exists \(n_0\in \mathbb{N}\) such that for every \(\mc{F}\in\mf{F}\) with \(\pl{v}(\mc{F}) = n\geq n_0\) there exists \(\mc{F}'\in \brm{core}(\mf{F})\) with \(\mc{F}'\subseteq \mc{F}\) such that 
\[|\mc{F}'|\geq |\mc{F}| - \eps n^r.\]
\end{lem}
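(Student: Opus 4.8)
The plan is to obtain $\mc{F}'$ from $\mc{F}$ by deleting a small number of edges chosen to destroy all copies of every blowup-minimal obstruction to membership in $\brm{core}(\mf{F})$. First I would reformulate what it means for a subgraph $\mc{F}' \subseteq \mc{F}$ to lie in $\brm{core}(\mf{F})$: since $\mf{F} = \brm{Forb}(\mf{G})$, an $r$-graph $\mc{F}'$ fails to be in $\brm{core}(\mf{F})$ exactly when some blowup of $\mc{F}'$ contains a member of $\mf{G}$, equivalently when $\mc{F}'$ itself contains a ``reduced'' copy of some $\mc{G}_i \in \mf{G}$ — that is, a homomorphic image of $\mc{G}_i$ onto a subgraph of $\mc{F}'$. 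Concretely, for each $\mc{G}_i \in \mf{G}$ let $\mf{R}_i$ be the (finite) set of all $r$-graphs $\mc{R}$ with $\pl{v}(\mc{R}) \le \pl{v}(\mc{G}_i)$ such that $\mc{G}_i \in \mf{B}(\mc{R})$; then $\mc{F}' \in \brm{core}(\mf{F})$ if and only if $\mc{F}'$ contains no copy of any $\mc{R} \in \bigcup_i \mf{R}_i =: \mf{R}$. Note $\mf{R}$ is a finite family of $r$-graphs, and each $\mc{R} \in \mf{R}$ is itself a blowup of nothing smaller in the relevant sense; the point is that $\mc{F} \in \mf{F} = \brm{Forb}(\mf{G})$ contains no blowup $\mc{B} \in \mf{B}(\mc{R})$ that is large enough to realize a copy of the corresponding $\mc{G}_i$.

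The second step applies Corollary~\ref{cor:erdos}. Fix $\mc{R} \in \mf{R}$, say $\mc{R}$ is a reduced form of $\mc{G}_i$, so that $\mc{G}_i \in \mf{B}(\mc{R})$. Since $\mc{F}$ is $\mc{G}_i$-free, and $\mc{G}_i$ is a blowup of $\mc{R}$, Corollary~\ref{cor:erdos} (applied with $\mc{H} = \mc{R}$, $\mc{B} = \mc{G}_i$) gives that for any $\delta' > 0$ there is $n_1$ such that for $n \ge n_1$ the graph $\mc{F}$ contains at most $\delta' n^{\pl{v}(\mc{R})}$ copies of $\mc{R}$. Then by the Removal Lemma (Lemma~\ref{removallem}, applied with $\mc{G} = \mc{R}$ and error parameter $\eps / |\mf{R}|$), choosing $\delta'$ appropriately, we can delete at most $\frac{\eps}{|\mf{R}|} n^r$ edges from $\mc{F}$ to make it $\mc{R}$-free. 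Doing this successively for each of the finitely many $\mc{R} \in \mf{R}$ — taking $n_0$ to be the maximum of the finitely many thresholds produced — yields $\mc{F}' \subseteq \mc{F}$ with $|\mc{F} \setminus \mc{F}'| \le \eps n^r$ and $\mc{F}'$ containing no copy of any member of $\mf{R}$. (One subtlety: deleting edges to kill copies of $\mc{R}$ could in principle be reordered against the count of copies of a later $\mc{R}'$, but since deleting edges only decreases the copy count of every $r$-graph, processing the $\mc{R}$'s one at a time is fine, and the per-step bound $\frac{\eps}{|\mf{R}|}n^r$ applies to the current graph which still has at least as many edges removed bounded by the sum.)

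The third step is to verify $\mc{F}' \in \brm{core}(\mf{F})$. Since $\mc{F}' \subseteq \mc{F} \in \mf{F}$ we have $\mc{F}' \in \mf{F}$. For any $\mc{B} \in \mf{B}(\mc{F}')$, suppose $\mc{B}$ contained some $\mc{G}_i \in \mf{G}$; as $\mc{B}$ is a blowup of $\mc{F}'$, the copy of $\mc{G}_i$ in $\mc{B}$ projects down to a copy in $\mc{F}'$ of some $r$-graph $\mc{R}$ with $\mc{G}_i \in \mf{B}(\mc{R})$ and $\pl{v}(\mc{R}) \le \pl{v}(\mc{G}_i)$, i.e.\ $\mc{R} \in \mf{R}$ — contradicting that $\mc{F}'$ is $\mf{R}$-free. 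Hence $\mc{B} \in \mf{F}$, so $\mf{B}(\mc{F}') \subseteq \mf{F}$ and $\mc{F}' \in \brm{core}(\mf{F})$, completing the argument.

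The main obstacle I anticipate is making precise the correspondence ``a copy of $\mc{G}_i$ in a blowup of $\mc{F}'$ $\Leftrightarrow$ a copy in $\mc{F}'$ of a reduced form $\mc{R}$ of $\mc{G}_i$'', and in particular checking that the set $\mf{R}$ of all such reduced forms is finite and that membership in $\brm{core}(\mf{F})$ is exactly equivalent to $\mf{R}$-freeness — this is where the definitions of cloning, blowup, and $\brm{core}$ have to be unwound carefully. Everything after that is a routine application of Lemma~\ref{removallem} and Corollary~\ref{cor:erdos} together with a union bound over the finite family $\mf{R}$.
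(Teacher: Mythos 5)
Your overall plan --- replace ``membership in $\brm{core}(\mf{F})$'' by freeness from a finite family of reduced patterns, bound the number of copies of each pattern in $\mc{F}$ via Corollary~\ref{cor:erdos}, and then delete few edges using Lemma~\ref{removallem}, pattern by pattern --- is exactly the paper's strategy. The gap is in how you formalize the reduced family, and it breaks precisely at the step you yourself flagged as the crux. You define $\mf{R}_i$ as the graphs $\mc{R}$ with $\mc{G}_i\in\mf{B}(\mc{R})$, i.e.\ $\mc{G}_i$ is \emph{exactly} a blowup of $\mc{R}$. But when a copy of $\mc{G}_i$ sits inside a blowup $\mc{B}$ of $\mc{F}'$ and you project it down, all you get is a subgraph $\mc{R}\subseteq\mc{F}'$ and a homomorphism from $\mc{G}_i$ onto $\mc{R}$ that is injective on each edge; this says $\mc{G}_i$ is a \emph{subgraph} of a blowup of $\mc{R}$, not that $\mc{G}_i\in\mf{B}(\mc{R})$, since blowups in this paper are produced by cloning and deleting vertices and can never drop edges. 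Concretely, take $r=2$ and $\mf{G}=\{P_4\}$, the path on four vertices. Then $P_4\notin\mf{B}(K_2)$ (blowups of an edge are complete bipartite), so your $\mf{R}$ contains neither the single edge nor any subgraph of a star; hence a large star $K_{1,n-1}$ is $\mf{R}$-free and your procedure outputs it unchanged. Yet $K_{1,n-1}\notin\brm{core}(\brm{Forb}(P_4))$: cloning the center (or both ends of any edge) produces a blowup containing $P_4$. So both your claimed equivalence ``$\brm{core}(\mf{F})=\mf{R}$-free'' and your Step 3 are false as stated, and the $\mc{F}'$ you construct need not lie in the core.

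The repair is to enlarge $\mf{R}$: take all $r$-graphs $\mc{R}$ on at most $\max_i\pl{v}(\mc{G}_i)$ vertices such that some blowup of $\mc{R}$ \emph{contains} some $\mc{G}_i$ as a subgraph (equivalently, $\mc{R}$ is an image of $\mc{G}_i$ under an edge-injective homomorphism); the paper works with the minimal such graphs, namely the minimal $\mc{H}$ with $\mf{B}(\mc{H})\not\subseteq\mf{F}$. With this family your Step 3 is correct, and Step 2 survives after one extra observation: you can no longer apply Corollary~\ref{cor:erdos} with $\mc{B}=\mc{G}_i$, but there is a blowup $\mc{B}\in\mf{B}(\mc{R})$ with $\mc{G}_i\subseteq\mc{B}$, and $\mc{F}$ is $\mc{B}$-free because it is $\mc{G}_i$-free, which is all the corollary needs. (In the $P_4$ example the corrected family contains the single edge, which is exactly why the core there consists of edgeless graphs and why the removal step is not vacuous.) Once this substitution is made, your argument coincides with the paper's proof.
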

\begin{proof} 
Let $\mf{H}$ be the family of all minimal graphs $\mc{H}$ such that $\mf{B}(\mc{H}) \not \subseteq \mf{F}$. It is easy to see that $\mf{H}$ is finite. In particular every element of $\mf{H}$ is a subgraph of some graph in $\mf{G}$: For every $\mc{H} \in \mf{H}$ there exists $\mc{B}_\mc{H} \in \mf{G} \cap \mf{B}(\mc{H})$.

By Lemma~\ref{removallem} there exists $\delta>0$ such that for every $\mc{H} \in \mf{H}$ every $r$-graph on $n$ vertices  which contains at most $\delta n^{\pl{v}(\mc{H})}$ copies of $\mc{H}$ can be made $\mc{H}$-free by removing at most $\frac{\eps}{|\mf{H}|}n^r$ edges.
By Corollary~\ref{cor:erdos} there exists \(n_0\) such that for every $n \geq n_0$ and every  $\mc{H} \in \mf{H}$ every  \(\mc{B}_{\mc{H}}\)-free graph $\mc{F}$ on \(n \geq n_0\) vertices contains at most \(\delta n^{\pl{v}(\mc{H})}\) of copies of \(\mc{H}\).   Hence, by removing at most  \(\eps n^r\) edges from any graph $\mc{F} \in \mf{F}$  on $n \geq n_0$ vertices, we can obtain a subgraph $\mc{F}'$ of \(\mc{F}\), which is $\mf{H}$-free. We have $\mc{F}' \in \brm{core}(\mf{F})$, as desired. 
\end{proof}

The following result establishes the desired connection between the stability of the family \(\mf{F}\) and the stability of the \(\brm{core}(\mf{F})\). The proof modulo Lemma~\ref{stabcore} is identical to the proof of~\cite[Theorem 8.1]{gentriangle}  and we omit it.

\begin{theorem}\label{corestability} Let $\mf{G},\mf{H}$ be families of \(r\)-graphs, such that $\mf{G}$ is finite, and let \(\mf{F} = \brm{Forb}(\mf{G})\). If \(\brm{core}(\mf{F})\) is \(\mf{H}\)-stable and \(\mf{F}\) is \(\mf{H}\)-locally stable, then \(\mf{F}\) is \(\mf{H}\)-stable.
\end{theorem}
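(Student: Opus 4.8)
The plan is to deduce Theorem~\ref{corestability} by combining Lemma~\ref{stabcore} with the stability of $\brm{core}(\mf{F})$, and then amplifying via local stability of $\mf{F}$ itself. Concretely, suppose $\brm{core}(\mf{F})$ is $(\mf{H}, \alpha_0)$-stable and $\mf{F}$ is $(\mf{H}, \eps_0, \alpha_1)$-locally stable. I want to show $\mf{F}$ is $(\mf{H}, \alpha)$-stable for a suitable $\alpha>0$, i.e.\ that $|\mc{F}| \le m(\mf{H}, n) - \alpha\, d_{\mf{H}}(\mc{F})$ for every $\mc{F}\in\mf{F}$ on $n\ge n_0$ vertices.

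First I would fix $\mc{F}\in\mf{F}$ with $\pl{v}(\mc{F})=n$ large, and apply Lemma~\ref{stabcore} with a small parameter $\eps_1$ (to be chosen below, much smaller than $\eps_0$ and than whatever threshold the core stability implicitly uses) to obtain $\mc{F}'\in\brm{core}(\mf{F})$ with $\mc{F}'\subseteq\mc{F}$ and $|\mc{F}'|\ge |\mc{F}| - \eps_1 n^r$. Applying $(\mf{H},\alpha_0)$-stability of $\brm{core}(\mf{F})$ to $\mc{F}'$ gives $|\mc{F}'|\le m(\mf{H},n) - \alpha_0\, d_{\mf{H}}(\mc{F}')$, hence
\[
|\mc{F}| \le m(\mf{H},n) - \alpha_0\, d_{\mf{H}}(\mc{F}') + \eps_1 n^r.
\]
Now split into two cases according to the size of $d_{\mf{H}}(\mc{F})$. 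If $d_{\mf{H}}(\mc{F})$ is large, say $d_{\mf{H}}(\mc{F}) > C\eps_1 n^r$ for a suitable constant $C$, then since $d_{\mf{H}}(\mc{F}') \ge d_{\mf{H}}(\mc{F}) - |\mc{F}\triangle\mc{F}'| = d_{\mf{H}}(\mc{F}) - |\mc{F}\setminus\mc{F}'| \ge d_{\mf{H}}(\mc{F}) - \eps_1 n^r$ (here I use $\mc{F}'\subseteq\mc{F}$, so the only contribution to the symmetric difference is edges of $\mc{F}$ removed), the displayed inequality yields $|\mc{F}| \le m(\mf{H},n) - \alpha_0 d_{\mf{H}}(\mc{F}) + (\alpha_0+1)\eps_1 n^r \le m(\mf{H},n) - \tfrac{\alpha_0}{2} d_{\mf{H}}(\mc{F})$ once $C$ is chosen so that $(\alpha_0+1)\eps_1 n^r \le \tfrac{\alpha_0}{2} d_{\mf{H}}(\mc{F})$. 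In the complementary case $d_{\mf{H}}(\mc{F}) \le C\eps_1 n^r$, I choose $\eps_1$ small enough that $C\eps_1 \le \eps_0$, so that $\mc{F}$ satisfies the hypothesis of $(\mf{H},\eps_0,\alpha_1)$-local stability, and local stability directly gives $|\mc{F}| \le m(\mf{H},n) - \alpha_1 d_{\mf{H}}(\mc{F})$. Taking $\alpha = \min\{\alpha_0/2, \alpha_1\}$ finishes the argument.

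The main obstacle — and the reason the excerpt says the proof is "identical to the proof of~\cite[Theorem 8.1]{gentriangle}" modulo Lemma~\ref{stabcore} — is bookkeeping the quantifiers so that a \emph{single} $\eps_1$ works uniformly: the core-stability threshold $n_0$, the removal-lemma $\delta$ hidden inside Lemma~\ref{stabcore}, and the local-stability parameters all need to be instantiated in the right order (first pick $\eps_0,\alpha_1$ from local stability; then pick $\alpha_0$ and the core-stability threshold; then pick $C$; then pick $\eps_1 \le \eps_0/C$; then pick $n_0$ large enough for Lemma~\ref{stabcore} with parameter $\eps_1$ and for all the asymptotic inequalities). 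The one genuinely new input beyond~\cite{gentriangle} is Lemma~\ref{stabcore}, which replaces whatever core-approximation statement was used there; since it has already been proved above, the remaining work is precisely the routine two-case estimate sketched here, which is why it is safe to omit.
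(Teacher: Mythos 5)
Your proof is correct and is exactly the argument the paper has in mind (the omitted proof, following \cite[Theorem 8.1]{gentriangle}): pass to a core subgraph via Lemma~\ref{stabcore}, apply core stability together with the triangle inequality $d_{\mf{H}}(\mc{F}')\ge d_{\mf{H}}(\mc{F})-\eps_1 n^r$ when $d_{\mf{H}}(\mc{F})$ is large, and invoke local stability of $\mf{F}$ directly when it is small. The quantifier ordering you describe is the right one, so nothing is missing.
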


We are interested in the case when \(\mf{F} =\brm{Forb}(\brm{Ext}(\mc{G}))\) in the theorem above.  A \emph{weak extension} of an \(r\)-graph \(\mc{G}\) is an \(r\)-graph obtained from \(\mc{G}\) by adding a new edge through every uncovered pair of vertices which could contain up to \((r-2)\) new vertices. Note that in particular, $\brm{Ext}(\mc{G})$ is a weak extension of \(\mc{G}\). We denote by \(\brm{WExt}(\mc{G})\) the family of all weak extensions of the graph \(\mc{G}\). We omit the proof of the following easy lemma.

\begin{lem}\label{factcore}Let \(\mc{G}\) be an $r$ graph. Then
\begin{enumerate}
	\item $\brm{Forb}(\brm{WExt}(\mc{G}))=\brm{core}(\brm{Forb}(\brm{Ext}(\mc{G}))),$ and
	\item if  $\mf{F^*}$ is a family of all graphs in $\brm{Forb}(\brm{WExt}(\mc{G}))$ which cover pairs,  then $\mf{F^*} \subseteq \brm{Forb}(\mc{G})$.
\end{enumerate}	
\end{lem}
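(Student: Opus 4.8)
The plan is to prove the two parts of Lemma~\ref{factcore} essentially by unwinding the definitions, with the only real content being a careful case analysis of how a copy of $\brm{Ext}(\mc{G})$ (or of a weak extension) can sit inside an $r$-graph, depending on which pairs of $\mc{G}$ are covered.

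\textbf{Part (1).} I would prove the two inclusions between $\brm{core}(\brm{Forb}(\brm{Ext}(\mc{G})))$ and $\brm{Forb}(\brm{WExt}(\mc{G}))$ separately. For the inclusion $\brm{core}(\brm{Forb}(\brm{Ext}(\mc{G}))) \subseteq \brm{Forb}(\brm{WExt}(\mc{G}))$, suppose $\mc{F} \in \brm{core}(\brm{Forb}(\brm{Ext}(\mc{G})))$ but $\mc{F}$ contains some weak extension $\mc{W} \in \brm{WExt}(\mc{G})$. The key observation is that every weak extension $\mc{W}$ of $\mc{G}$ contains a \emph{blowup} of $\brm{Ext}(\mc{G})$: whenever a new edge added through an uncovered pair $P$ uses fewer than $r-2$ new vertices, it reuses some old vertices, and passing to a blowup (cloning those old vertices) lets us separate all the new-edge vertices, producing a genuine copy of $\brm{Ext}(\mc{G})$. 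Hence some $\mc{B} \in \mf{B}(\brm{Ext}(\mc{G}))$ embeds into a blowup of $\mc{F}$; but $\mf{B}(\mc{F}) \subseteq \brm{Forb}(\brm{Ext}(\mc{G}))$ by definition of the core, and blowups of $\brm{Ext}(\mc{G})$ contain $\brm{Ext}(\mc{G})$, a contradiction. Conversely, if $\mc{F} \in \brm{Forb}(\brm{WExt}(\mc{G}))$, then since $\brm{Ext}(\mc{G}) \in \brm{WExt}(\mc{G})$ we get $\mc{F} \in \brm{Forb}(\brm{Ext}(\mc{G}))$; and $\brm{Forb}(\brm{WExt}(\mc{G}))$ is closed under blowups because cloning a vertex cannot create a weak extension (any copy of a weak extension in a blowup, after identifying clones, still has each edge through each originally-uncovered pair, hence still witnesses membership in $\brm{WExt}(\mc{G})$ up to the flexibility in how many new vertices are used). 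So $\mf{B}(\mc{F}) \subseteq \brm{Forb}(\brm{WExt}(\mc{G})) \subseteq \brm{Forb}(\brm{Ext}(\mc{G}))$, giving $\mc{F} \in \brm{core}(\brm{Forb}(\brm{Ext}(\mc{G})))$.

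\textbf{Part (2).} Let $\mc{F} \in \mf{F}^*$, so $\mc{F}$ is $\brm{WExt}(\mc{G})$-free and covers pairs. Suppose for contradiction $\mc{F}$ contains a copy of $\mc{G}$. Since $\mc{F}$ covers pairs, every pair of vertices of this copy of $\mc{G}$ is covered in $\mc{F}$ by \emph{some} edge of $\mc{F}$. Those edges, together with the copy of $\mc{G}$, provide precisely the data of a weak extension of $\mc{G}$ inside $\mc{F}$: for each pair $P$ uncovered within the copy of $\mc{G}$, pick an edge $E_P \in \mc{F}$ containing $P$; then $\mc{G} \cup \bigcup_P E_P$ contains a weak extension of $\mc{G}$ (an edge through each uncovered pair, using at most $r-2$ other vertices, old or new). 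This contradicts $\brm{WExt}(\mc{G})$-freeness, so $\mc{F}$ must be $\mc{G}$-free, i.e.\ $\mf{F}^* \subseteq \brm{Forb}(\mc{G})$.

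\textbf{Main obstacle.} The delicate point throughout is the bookkeeping around weak extensions: a weak extension is defined with ``up to $r-2$ new vertices,'' so edges through uncovered pairs may overlap each other and overlap $V(\mc{G})$ arbitrarily, and one must check that every such configuration is still captured — both that it genuinely lies in $\brm{WExt}(\mc{G})$ and that it yields a blowup of the full extension $\brm{Ext}(\mc{G})$. I expect the proof to be short once the right formulation is fixed, which is presumably why the authors call it an ``easy lemma'' and omit it; the work is entirely in setting up the definitions so these overlap cases are handled uniformly rather than in any substantive argument.
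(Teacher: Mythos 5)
Your argument is correct, and since the paper explicitly omits the proof of this lemma ("we omit the proof of the following easy lemma"), there is no authorial proof to compare against; your unwinding of the definitions via the clone-projection map and the cloning-to-separate-reused-vertices construction is exactly the intended routine argument. Two small points of phrasing deserve tightening but are not gaps: the claim that "every weak extension contains a blowup of $\brm{Ext}(\mc{G})$" is stated backwards (and the aside that "blowups of $\brm{Ext}(\mc{G})$ contain $\brm{Ext}(\mc{G})$" is false as stated, since blowups may delete vertices) --- what your construction actually shows, and what the contradiction needs, is that a sufficiently large blowup of $\mc{F}$ contains a genuine copy of $\brm{Ext}(\mc{G})$; and in the converse direction you should note explicitly that the clone-identification map is injective on the copy of $V(\mc{G})$, which holds because every pair of vertices of $V(\mc{G})$ is covered in any (weak) extension and edges of a blowup meet each clone class at most once.
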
  

We combine the consequences of Theorems~\ref{localfromvertexlocal},~\ref{stabilityfromlocalstability}, ~\ref{corestability} and Lemma~\ref{factcore} that we are interested in  into a single result as follows.

\begin{corollary}\label{cor:main} Let $\mc{G}$ be an $r$-graph, let $\mf{F}=\brm{Forb}(\brm{Ext}(\mc{G}))$. Let $\mf{F}^*$ be the family of $r$-graphs in $\brm{Forb}(\mc{G})$ that cover pairs, and let $\mf{H} \subseteq \mf{F}$ be a clonable family of $r$-graphs. If the following conditions hold
	\begin{description}
		\item[(C1)] $\mf{F}$ is  $\mf{H}$-vertex locally stable,
		\item[(C2)] $\mf{F}^*$ is $\mf{H}$-weakly weight stable,
	\end{description}
then $\mf{F}$ is $\mf{H}$-stable. In particular, there exists $n_0 \in \bb{N}$ such that if $\mc{F} \in \mf{F}$ satisfies $\pl{v}(\mc{F})=n$ and  $|\mc{F}| =m(\mf{F},n)$ for some $n \geq n_0$ then $\mc{F} \in \mf{H}$.	
\end{corollary}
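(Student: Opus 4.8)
The plan is to derive Corollary~\ref{cor:main} by assembling the four results that precede it, with the only genuinely new verification being that the hypotheses match up. First I would observe that $\mf{F}=\brm{Forb}(\brm{Ext}(\mc{G}))$ is of the form $\brm{Forb}(\mf{G})$ for the finite family $\mf{G}=\{\brm{Ext}(\mc{G})\}$, so Theorem~\ref{corestability} applies once we know that (i) $\brm{core}(\mf{F})$ is $\mf{H}$-stable and (ii) $\mf{F}$ is $\mf{H}$-locally stable. For (ii), note that $\mf{H}$ is clonable by assumption, so Theorem~\ref{localfromvertexlocal} converts the vertex local stability hypothesis \textbf{(C1)} into the required local stability of $\mf{F}$.

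The heart of the argument is establishing (i). By Lemma~\ref{factcore}(1) we have $\brm{core}(\mf{F})=\brm{Forb}(\brm{WExt}(\mc{G}))$, which is clonable (a family of the form $\brm{Forb}(\cdot)$ is closed under blowups precisely because a blowup of a $\brm{WExt}(\mc{G})$-free graph is again $\brm{WExt}(\mc{G})$-free; alternatively this is immediate from the definition of $\brm{core}$). Now apply Theorem~\ref{stabilityfromlocalstability} with $\mf{F}$ there replaced by $\brm{core}(\mf{F})$ and $\mf{H}$ there equal to our $\mf{H}$: both families are clonable, and the subfamily of $\brm{core}(\mf{F})$ consisting of graphs that cover pairs is, by Lemma~\ref{factcore}(2), contained in $\brm{Forb}(\mc{G})$, hence is a subfamily of $\mf{F}^* \cap \brm{core}(\mf{F}) \subseteq \mf{F}^*$. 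Since weak weight stability of a family is inherited by subfamilies (the defining implication is universally quantified over members), hypothesis \textbf{(C2)} gives that this subfamily is $\mf{H}$-weakly weight stable. We also need $\brm{core}(\mf{F})$ to be $\mf{H}$-locally stable: this follows because $\brm{core}(\mf{F}) \subseteq \mf{F}$ and local stability, being an upper bound that must hold for all members in the relevant range, passes to subfamilies. Thus Theorem~\ref{stabilityfromlocalstability} yields that $\brm{core}(\mf{F})$ is $\mf{H}$-stable.

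With (i) and (ii) in hand, Theorem~\ref{corestability} gives that $\mf{F}$ is $\mf{H}$-stable. For the final ``in particular'' clause, I would unwind the definition of $\mf{H}$-stability: there exist $\alpha>0$ and $n_0$ such that every $\mc{F}\in\mf{F}$ with $\pl{v}(\mc{F})=n\geq n_0$ satisfies $|\mc{F}|\leq m(\mf{H},n)-\alpha\, d_{\mf{H}}(\mc{F})$. Since $\mf{H}\subseteq\mf{F}$, we have $m(\mf{H},n)\leq m(\mf{F},n)$, but for any extremal $\mc{F}$ the displayed inequality forces $|\mc{F}|\leq m(\mf{H},n)$, so in fact $m(\mf{F},n)=m(\mf{H},n)$ and $d_{\mf{H}}(\mc{F})=0$ whenever $|\mc{F}|=m(\mf{F},n)$. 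As $d_{\mf{H}}(\mc{F})=0$ means $\mc{F}$ agrees with some member of $\mf{H}$ on the same vertex set, i.e. $\mc{F}\in\mf{H}$, the claim follows. (One should check $m(\mf{H},n)$ is finite and positive so that the extremal $\mc{F}$ exists; this is routine since $\mf{H}$ contains $r$-graphs on $n$ vertices for all large $n$.)

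The main obstacle — really the only non-bookkeeping point — is verifying that the ``cover pairs'' subfamily of $\brm{core}(\mf{F})$ is the right thing to feed into Theorem~\ref{stabilityfromlocalstability}, i.e. reconciling Lemma~\ref{factcore}(2)'s ``$\mf{F}^*\subseteq\brm{Forb}(\mc{G})$'' with the corollary's definition of $\mf{F}^*$ as the pair-covering graphs in $\brm{Forb}(\mc{G})$, and then confirming that weak weight stability descends to this subfamily. Both are genuinely soft, but they are where the definitions must be lined up carefully rather than invoked; everything else is a direct citation of Theorems~\ref{localfromvertexlocal},~\ref{stabilityfromlocalstability},~\ref{corestability} and Lemma~\ref{factcore}.
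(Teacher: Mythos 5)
Your proposal is correct and follows essentially the same route as the paper's proof: (C1) plus Theorem~\ref{localfromvertexlocal} gives local stability, which passes to $\brm{core}(\mf{F})$; Lemma~\ref{factcore} identifies the pair-covering part of the core so that (C2) and Theorem~\ref{stabilityfromlocalstability} yield stability of the core; and Theorem~\ref{corestability} then gives $\mf{H}$-stability of $\mf{F}$, from which the extremal statement follows by unwinding the definition exactly as you do.
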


\begin{proof}
From (C1) and Theorem~\ref{localfromvertexlocal} it follows that $\mf{F}$ is  $\mf{H}$-locally stable. In particular, $\mf{F}'=\brm{core}(\mf{F})$ is $\mf{H}$-locally stable. By Lemma~\ref{factcore},  $\mf{F}^*$ is exactly the family of  graphs in $\mf{F}'$ which cover pairs. Thus $\mf{F}'$ is 	$\mf{H}$-stable by Theorem~\ref{stabilityfromlocalstability}. Finally, it follows from Theorem~\ref{corestability} that $\mf{F}$ is 	$\mf{H}$- stable.
\end{proof}
	
It follows from Corollary~\ref{cor:main} that Theorems~\ref{thm:trees} and~\ref{thm:mubayigen}  will follow if verify that conditions (C1) and (C2) hold when \begin{description}
	\item[(T1)] $\mc{G}$ is an $(r-2)$-expansion of a sufficiently large Erd\H{o}s-S\'os tree $T$  and $\mf{H}=\mf{B}(\mc{K}^{(r)}_{t+r-3})$, and
	\item[(T2)] $\mc{G} = \mc{F}^{+(t+1)}$ and $\mf{H}=\mf{B}(\mc{K}^{(r)}_{t})$,
\end{description}
respectively.

In Section~\ref{sec:localstab} we describe a large family of graphs $\mc{G}$ such that $\brm{Forb}(\mc{G})$ is $\mf{B}(\mc{K}^{(r)}_{p})$-vertex locally stable (and thus $\mf{B}(\mc{K}^{(r)}_{p})$-locally stable) for some $p$ such that $\mf{B}(\mc{K}^{(r)}_{p}) \subseteq \brm{Forb}(\mc{G})$. This family will be rich enough to verify that Corollary~\ref{cor:main} (C1) holds in both cases that we are interested in. We show this in Sections \ref{sec:trees} and ~\ref{sec:mubayipikh}, respectively, where we additionally verify that the condition 
(C2) holds for $\mc{G}$ and $\mf{H}$ in (T1) and (T2).

\section{Local Stability with respect to blowups of complete graphs}
\label{sec:localstab}

We say that an \(r\)-graph \(\mc{H}\) is \emph{strongly \(t\)-colorable}, if the vertices of \(\mc{H}\) can be colored in $t$ colors such that every edge contains no two vertices of the same color. Equivalently, $\mc{H}$ is strongly $t$-colorable if and only if $\mf{B}(\mc{K}^{(r)}_t)$ is not $\mc{H}$-free.
Recall that an \(r\)-graph \(\mc{H}\)  is \(t\)-\emph{colorable}  the vertices of \(\mc{H}\) can be colored  in $t$ colors such that no edge is monochromatic. For \(r=2\) the definitions of strong $t$-colorability  and $t$-colorability  coincide, but for \(r\geq 3\) they differ.  

We say that \(\mc{H}\) is \(t\)-\emph{critical} if \(\mc{H}\) is not  strongly \(t\)-colorable, but there exists an edge \(F\in\mc{H}\) such that \(\mc{H}\setminus F\) is  strongly \(t\)-colorable; such an edge \(F\) is called \emph{critical}.  We are interested in a subfamily of \(t\)-critical \(r\)-graphs. We say that a pair $(\mc{H}, F)$ is \emph{freely \(t\)-critical}, 
if $F \in \mc{H}$ is critical and  
\((r-2)\) vertices of $F$ not contained in any other edge of $\mc{H}$. We say that these $r-2$ vertices are \emph{free in $(\mc{H}, F)$} and the other two vertices of \(F\) are \emph{critical in $(\mc{H}, F)$}. Recall that a set of edges in graph is a \emph{matching} if no two of them share a vertex.

Given graph \(\mc{H}\) and $F \in \mc{H}$ and \(v\in F\), we say that a triple $(\mc{H},F,v)$ is a \emph{\(t\)-spike}  the following conditions hold.
\begin{itemize}
\item [(i)]	 The pair $(\mc{H},F)$ is freely \(t\)-critical, and $v$ is critical in $(\mc{H},F)$.
\item [(ii)] The link \(L_{\mc{H}}(v)\) of \(v\) is a matching.
\item [(iii)] For every \(\mc{S}\subseteq [t]^{(r-1)}\), such that \(|\mc{S}|\geq {t-1 \choose r-1}\), and $\mc{S}$  is not isomorphic to $\mc{K}_{t-1}^{(r-1)}$, there exists \(\varphi: V(\mc{H}) \rightarrow [t]\) such that 
\begin{itemize} \item [(iii-a)] for every edge \(F'\in \mc{H}\setminus v\), \(|\varphi(F')| =r\) (in other words, \(\varphi\) maps the edges of \(\mathcal{H} \setminus  v\) into $[t]$ injectively),
\item [(iii-b)]  \(|\varphi(I)| = r-1\) and \(\varphi(I)\in \mc{S}\) for every \(I\in L_{\mc{H}}(v)\).
\end{itemize}
\end{itemize}

We say that $\mc{H}$ is \emph{sharply $t$-critical} is there exist $v \in F \in \mc{H}$ such that $(\mc{H},F,v)$ is a $t$-spike.
Note that for $2$-graphs the technical definition above simplifies considerably.
Indeed if $\mc{H}$ is a $2$-graph which is not $t$-colorable, and $v \in F \in \mc{H}$ are such that $\mc{H} \setminus F$  is $t$-colorable, then $(\mc{H},F,v)$ is a $t$-spike, as the conditions (ii) and (iii) above are trivially satisfied.

We are now ready to state the main result of this section.

\begin{theorem}\label{vertexlocalstab} If an $r$-graph $\mc{H}$ is sharply $t$-critical then \(\brm{Forb}(\mc{H})\) is \(\mf{B}(\mc{K}_t^{(r)})\)-vertex locally stable.
\end{theorem}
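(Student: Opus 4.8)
The aim is to establish the inequality in the definition of vertex local stability with $\alpha=1$ and $\eps$ a sufficiently small constant. Since $\mc{H}$ is not strongly $t$-colourable, $\mf{B}(\mc{K}_t^{(r)})\subseteq\brm{Forb}(\mc{H})$; write $m:=m(\mf{B}(\mc{K}_t^{(r)}),n)$. A standard convexity fact (Schur-concavity of the elementary symmetric polynomial $e_r$) gives $m=|\mc{K}_t^{(r)}(n)|$, together with the quantitative refinement that if $\mc{B}$ is the complete $t$-partite $r$-graph with parts $W_1,\dots,W_t$ on $n$ vertices then $|\mc{B}|\le m-c_0(n/t)^{r-2}\sum_i(|W_i|-n/t)^2$ for some $c_0=c_0(r,t)>0$; in particular $|\mc{B}|\le m$. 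Now let $\mc{F}\in\brm{Forb}(\mc{H})$ with $n:=\brm{v}(\mc{F})$ large, $d:=d_{\mf{B}(\mc{K}_t^{(r)})}(\mc{F})\le\eps n^r$, and $|L_{\mc{F}}(x)|\ge r(1-\eps)m/n$ for every $x\in V(\mc{F})$. Fix a partition $(V_1,\dots,V_t)$ of $V(\mc{F})$ achieving the minimum in the definition of $d$, let $\mc{B}$ be the complete $t$-partite $r$-graph with parts $V_1,\dots,V_t$, and set $M:=\mc{B}\setminus\mc{F}$, $A:=\mc{F}\setminus\mc{B}$, so that $|\mc{F}|=|\mc{B}|-|M|+|A|$ and $d=|M|+|A|$. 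The key reduction is that it suffices to prove $A=\emptyset$: granting this, $|\mc{F}|=|\mc{B}|-|M|\le m-|M|=m-d$, which is exactly the bound required.

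First I would regularize the partition. Summing the degree hypothesis over all vertices gives $|\mc{F}|\ge(1-\eps)m$, hence $|\mc{B}|=|\mc{F}|+|M|-|A|\ge|\mc{F}|-d\ge m-O(\eps n^r)$; by the quantitative convexity fact this forces each $|V_i|$ to lie within $O(\sqrt\eps\,n)$ of $n/t$. For $x\in V_i$ let $L_{\mc{B}}(x)$ be the set of \emph{standard} link elements at $x$, namely the $(r-1)$-subsets of $V(\mc{F})\setminus V_i$ having at most one vertex in each part; its size equals $rm/n$ up to a factor $1+O(\sqrt\eps)$. Call $x\in V_i$ \emph{typical} if $|L_{\mc{F}}(x)\setminus L_{\mc{B}}(x)|<\eta n^{r-1}$, where $\eta=\eta(\mc{H},t)$ is a small constant fixed later. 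Combined with the degree bound, typicality of $x$ forces $|L_{\mc{B}}(x)\setminus L_{\mc{F}}(x)|$ to be a small fraction of $(n/t)^{r-1}$, so if $\eps,\eta$ are small enough in terms of $\mc{H}$ and $t$, then for each $S\in\binom{[t]\setminus\{i\}}{r-1}$ at least $(1-\xi)(n/t)^{r-1}$ members of $L_{\mc{F}}(x)$ have exactly one vertex in $V_j$ for each $j\in S$ and none outside $\bigcup_{j\in S}V_j$, where $\xi$ can be made as small as desired. The crucial step — and the one I expect to be the main obstacle — is to show that \emph{every vertex of $\mc{F}$ is typical}. I would argue by contradiction: if $x\in V_i$ is not typical then, by the degree bound, $L_{\mc{F}}(x)$ is large but has many elements outside $L_{\mc{B}}(x)$, and one does a case analysis of its pattern profile. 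When this profile ``looks like the standard profile of a different part'', the minimality of the chosen partition is contradicted. Otherwise the set $\mc{S}_x\subseteq[t]^{(r-1)}$ of patterns occurring at least $\gamma n^{r-1}$ times in $L_{\mc{F}}(x)$ (for a suitable small $\gamma$) satisfies $|\mc{S}_x|\ge\binom{t-1}{r-1}$ and $\mc{S}_x\not\cong\mc{K}_{t-1}^{(r-1)}$, so condition (iii) of the $t$-spike $(\mc{H},F,v)$ yields a map $\varphi\colon V(\mc{H})\to[t]$ that is rainbow on every edge of $\mc{H}\setminus v$ and sends each $I\in L_{\mc{H}}(v)$ into $\mc{S}_x$. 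Using that $L_{\mc{H}}(v)$ is a matching (condition (ii)), one embeds $\mc{H}$ in $\mc{F}$ with $v\mapsto x$ — realizing the disjoint sets of the matching $L_{\mc{H}}(v)$ inside the abundant patterns of $L_{\mc{F}}(x)$, and the rest of $\mc{H}$ along $\varphi$ inside the nearly complete partite structure — contradicting $\mc{H}$-freeness. Hence all vertices are typical.

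Finally I would rule out extra edges. Suppose $A\ne\emptyset$ and fix $e\in A$; then $e$ has two vertices $a,b$ in a common part, say $V_1$, and we may write $e=\{a,b,u_1,\dots,u_{r-2}\}$. Let $(\mc{H},F,v)$ be a $t$-spike with second critical vertex $v'$, and let $c$ be a strong $t$-colouring of $\mc{H}\setminus F$. Since $\mc{H}$ is not strongly $t$-colourable while the $r-2$ free vertices of $F$ may be coloured arbitrarily, we must have $c(v)=c(v')$, say $=1$; the same observation shows that $F$ is the only edge of $\mc{H}$ meeting both $v$ and $v'$. Consider maps $\psi\colon V(\mc{H})\to V(\mc{F})$ with $\psi(v)=a$, $\psi(v')=b$, sending the free vertices of $F$ to $u_1,\dots,u_{r-2}$ (so that $\psi(F)=e\in\mc{F}$), and with $\psi(z)\in V_{c(z)}$ for every remaining vertex $z$; each of the $\brm{v}(\mc{H})-r$ remaining vertices has $(1+O(\sqrt\eps))n/t$ admissible images. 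Choosing these images uniformly at random, and using that $a$ and $b$ are typical, one bounds the probability that $\psi$ is not an embedding of $\mc{H}$ into $\mc{F}$: for $F'\ne F$ with $v\in F'$ (resp. $v'\in F'$), the set $\psi(F'\setminus v)$ (resp. $\psi(F'\setminus v')$) is a near-uniform $(r-1)$-set of its standard pattern, hence lies in $L_{\mc{F}}(a)$ (resp. $L_{\mc{F}}(b)$) with probability at least $1-\xi$; for $F'$ disjoint from $\{v,v'\}$ the set $\psi(F')$ is a near-uniform transversal $r$-set and lies in $M$ with probability $O(\eps)$; and $\psi$ fails to be injective, or maps some remaining vertex into $e$, with probability $O(1/n)$. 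Summing over the boundedly many edges of $\mc{H}$, the total failure probability is below $1$ once $\xi$ and $\eps$ are sufficiently small and $n$ is large. Hence some $\psi$ embeds $\mc{H}$ into $\mc{F}$, a contradiction; therefore $A=\emptyset$ and the proof is complete. The balancing step and this final embedding are routine variants of the classical stability argument of~\cite{simonovits} and of the tools of~\cite{gentriangle}; the substantive difficulty is concentrated in the typicality step, where the full strength of conditions (ii) and (iii) of a $t$-spike is used.
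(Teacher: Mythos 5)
Your overall strategy is the same as the paper's: fix a best blowup approximation, use the degree hypothesis plus the two spike conditions (ii)--(iii) and a random embedding to pin down every vertex link, then kill non-transversal edges with a second embedding built from the critical edge (mapping its two critical vertices to the two same-part vertices and its $r-2$ free vertices to the remaining vertices of the offending edge), and finish with $|\mc{F}|\le|\mc{B}|-|\mc{B}\setminus\mc{F}|\le m-d$ and $\alpha=1$. Your last step, including the observation that every strong $t$-colouring of $\mc{H}\setminus F$ gives $c(v)=c(v')$ and that $F$ is the only edge containing both critical vertices, is essentially the paper's Lemma~\ref{transversal}.

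The genuine gap is in the ``every vertex is typical'' step, and it comes from the order in which you run the two embeddings. To invoke condition (iii) at an atypical vertex $x$ you need $|\mc{S}_x|\ge\binom{t-1}{r-1}$, and your count of frequent patterns tacitly assumes that almost all of $L_{\mc{F}}(x)$ consists of $(r-1)$-sets meeting each part at most once. But at this stage nothing rules out that $L_{\mc{F}}(x)$ contains $\Omega(n^{r-1})$ sets with two vertices in one part (the bound $|\mc{F}\setminus\mc{B}|\le\eps n^r$ gives no control at a single vertex); in that case $|\mc{S}_x|$ can drop below $\binom{t-1}{r-1}$, your ``looks like another part'' alternative also fails, and the dichotomy collapses. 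You cannot repair this by appealing to your final step, because that embedding requires the two same-part vertices of the offending edge to be typical, and here one of them is $x$ itself --- a circularity. The paper breaks this circle by sequencing differently: it first deletes the small set $J$ of atypical vertices, proves that the restriction to the typical vertices is transversal \emph{and} (the ``moreover'' clause of Lemma~\ref{transversal}, which is exactly where the $r-2$ free vertices of the critical edge are used, since the extra vertices of a bad edge may lie in $J$ or be otherwise unconstrained) that every edge of $\mc{F}$ meets each surviving part at most once; only then is the frequent-pattern hypergraph $\mc{S}(u)$ of a vertex $u\in J$ guaranteed to have at least $\binom{t-1}{r-1}$ elements, so that condition (iii) applies and Claim~\ref{link} forces $\mc{S}(u)\cong\mc{K}_{t-1}^{(r-1)}$. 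The paper then simply reassigns each $u\in J$ to its part and builds a new blowup $\mc{B}_0$ (so it never needs your vaguer ``minimality of the chosen partition is contradicted'' case, nor the claim that all vertices are typical for the minimizing partition), and applies the transversal lemma once more to get $\mc{F}\subseteq\mc{B}_0$. Your write-up can be completed, but only by inserting this restriction-to-typical-vertices/transversality stage before the typicality classification; as proposed, the key invocation of (iii) is unjustified.
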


Theorem~\ref{vertexlocalstab} and the remark preceding it imply that for every $t$-critical $2$-graph $\mc{H}$ the family $\brm{Forb}(\mc{H})$ is \(\mf{B}(\mc{K}_t)\)-vertex locally stable. By Corollary~\ref{cor:main} this implies a classical theorem of Simonovits~\cite{simonovits}, which using our language can be stated as follows.

\begin{corollary}[\cite{simonovits}] Let $\mc{H}$ be a $t$-critical $2$-graph.  Then \(\brm{Forb}(\mc{H})\) is \(\mf{B}(\mc{K}_t)\)-stable.
\end{corollary}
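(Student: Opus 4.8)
The plan is to deduce the corollary from the machinery of Section~\ref{sec:notation}, using Theorem~\ref{vertexlocalstab} as the only substantive input. Fix a $t$-critical $2$-graph $\mc{H}$ with a critical edge $F$, and set $\mf{F}:=\brm{Forb}(\mc{H})$ and $\mf{H}:=\mf{B}(\mc{K}_t)$. First I would observe, via the remark preceding Theorem~\ref{vertexlocalstab}, that for an arbitrary $v\in F$ the triple $(\mc{H},F,v)$ is a $t$-spike (conditions (ii) and (iii) in the definition of a $t$-spike being vacuous for $2$-graphs, since strong $t$-colorability and $t$-colorability coincide), so $\mc{H}$ is sharply $t$-critical. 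Theorem~\ref{vertexlocalstab} then gives that $\mf{F}$ is $\mf{H}$-vertex locally stable, and since $\mf{H}=\mf{B}(\mc{K}_t)$ is clonable, Theorem~\ref{localfromvertexlocal} upgrades this to: $\mf{F}$ is $\mf{H}$-locally stable. Trivially every subfamily of an $\mf{H}$-locally stable family is $\mf{H}$-locally stable, so $\mf{F}':=\brm{core}(\mf{F})$ is $\mf{H}$-locally stable as well; and $\mf{F}'$ is clonable, being closed under blowups by construction.

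The next step is to identify the cover-pairs members of $\mf{F}'$. A $2$-graph covers pairs precisely when it is complete, so I would determine, for each $s$, when $\mc{K}_s$ lies in $\mf{F}'=\brm{core}(\brm{Forb}(\mc{H}))$. By definition $\mc{K}_s\in\mf{F}'$ iff no blowup of $\mc{K}_s$ contains $\mc{H}$; since every blowup of $\mc{K}_s$ is a complete $s$-partite graph (with some parts possibly empty) and hence admits a homomorphism to $\mc{K}_s$, this holds iff $\mc{H}$ has no homomorphism to $\mc{K}_s$, i.e.\ iff $\chi(\mc{H})>s$. As $\mc{H}$ is $t$-critical we have $\chi(\mc{H})=t+1$: it is not $t$-colorable, while $t$-coloring $\mc{H}\setminus F$ and recoloring one endpoint of $F$ with a fresh color exhibits a proper $(t+1)$-coloring. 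Hence the family $(\mf{F}')^{\ast}$ of graphs in $\mf{F}'$ that cover pairs equals $\{\mc{K}_s:\, s\le t\}$, and each such $\mc{K}_s$ is obtained from $\mc{K}_t$ by deleting vertices, so $(\mf{F}')^{\ast}\subseteq\mf{B}(\mc{K}_t)=\mf{H}$. Consequently $d_{\mf{H}}(\mc{F},\mu)=0$ for every $\mc{F}\in(\mf{F}')^{\ast}$ and every $\mu\in\mc{M}(\mc{F})$ (take $\mc{F}'=\mc{F}\in\mf{H}$ and $\mu'=\mu$ in the definition of the distance), so $(\mf{F}')^{\ast}$ is $\mf{H}$-weakly weight stable, vacuously, with any choice of $\delta$.

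To conclude: $\mf{F}'$ and $\mf{H}$ are clonable, $\mf{F}'$ is $\mf{H}$-locally stable, and $(\mf{F}')^{\ast}$ — which by the previous paragraph is exactly the subfamily of $\mf{F}'$ covering pairs — is $\mf{H}$-weakly weight stable, so Theorem~\ref{stabilityfromlocalstability} yields that $\mf{F}'=\brm{core}(\mf{F})$ is $\mf{H}$-stable. Finally, applying Theorem~\ref{corestability} with the finite family $\mf{G}=\{\mc{H}\}$ and $\mf{F}=\brm{Forb}(\mc{H})$: since $\brm{core}(\mf{F})$ is $\mf{H}$-stable and $\mf{F}$ is $\mf{H}$-locally stable, $\mf{F}=\brm{Forb}(\mc{H})$ is $\mf{B}(\mc{K}_t)$-stable, as claimed. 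This is precisely the chain of implications packaged into Corollary~\ref{cor:main}, the only difference being that here $\mc{H}$ need not literally be an extension, so one identifies $\brm{core}(\brm{Forb}(\mc{H}))$ and its cover-pairs subfamily directly rather than through Lemma~\ref{factcore}. I do not expect any serious obstacle once Theorem~\ref{vertexlocalstab} is granted; the one point deserving care is the homomorphism description of the core, which is what makes the weak-weight-stability hypothesis of Theorem~\ref{stabilityfromlocalstability} collapse to the triviality $(\mf{F}')^{\ast}\subseteq\mf{H}$.
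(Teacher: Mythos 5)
Your proof is correct and follows exactly the chain of implications the paper has in mind (vertex local stability via the $t$-spike together with the remark that (ii), (iii) collapse for $r=2$, then Theorems~\ref{localfromvertexlocal}, \ref{stabilityfromlocalstability}, and~\ref{corestability}). Your extra care in identifying $\brm{core}(\brm{Forb}(\mc{H}))$ and its cover-pairs subfamily directly, rather than invoking Corollary~\ref{cor:main} as the paper does, is in fact warranted: Corollary~\ref{cor:main} is stated only for families of the form $\brm{Forb}(\brm{Ext}(\mc{G}))$, and for $r=2$ the extension $\brm{Ext}(\mc{G})$ is always a complete graph, so a general $t$-critical $2$-graph does not literally fit its hypotheses.
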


The rest of the section is occupied by the proof of Theorem~\ref{vertexlocalstab}.

Let  \(\pl{e}(t, r) := t^{-r}{t \choose r},\) \(\pl{d}(t, r) := t^{-(r-1)}{t-1 \choose r-1}.\) Note that $\pl{e}(t, r) $ stands for the normalized edge density of \(\mc{K}_{t}^{(r)}\), and \(\pl{d}(t, r) \) is the normalized degree of a vertex in \(\mc{K}_{t}^{(r)}\). For an $r$-graph $\mc{F}$ and a partition $\mc{P}$ of $V(\mc{F})$ we say that $F \in \mc{F}$ is \emph{$\mc{P}$-transversal} if $F$ intersects each part of $\mc{P}$ in at most one element. 
Let \(\mc{B}\in\mf{B}(\mc{K}_t^{(r)})\) with \(\pl{v}(\mc{B} )  = n\) and  let \(\mc{P} = \{P_1, P_2, \dots, P_t\}\) be \emph{the blowup partition of \(\mc{B}\)}, that is a partition of $V(\mc{B})$ such that every edge of $\mc{B}$ is $\mc{P}$-transversal.  We say that \(\mc{B}\) is \(\eps\)-balanced if for every \(i = 1, 2, \dots, t\),
\[\left|P_i - \frac{n}{t}\right|\leq \eps.\]
 We omit the proof of the following routine lemma.
\begin{lem}
\label{sizelem} For every $\eps >0$ there exists $\delta >0$ and $n_0\in \mathbb{N}$ such that the following holds. If $\mc{B}\in\mf{B}(\mc{K}_t^{(r)})$ with $\pl{v}(\mc{B})=n\geq n_0$ and $|\mc{B}|\geq \left(\pl{e}(t,r) - \delta \right) n^r$, then \(\mc{B}\) is \(\eps\)-balanced.
\end{lem}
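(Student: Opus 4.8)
The plan is to translate the statement into a compactness fact about the $r$-th elementary symmetric polynomial on the standard simplex. Write $\mc{P}=\{P_1,\dots,P_t\}$ for the blowup partition of $\mc{B}$ and set $x_i:=|P_i|/n$, so that $x_i\geq 0$ and $\sum_{i=1}^t x_i=1$. Since $\mc{B}$ is a blowup of $\mc{K}_t^{(r)}$, its edges are exactly the $\mc{P}$-transversal $r$-subsets of $V(\mc{B})$, so
\[ |\mc{B}| \;=\; \sum_{S\in [t]^{(r)}}\ \prod_{i\in S}|P_i| \;=\; n^r\sum_{S\in [t]^{(r)}}\ \prod_{i\in S}x_i \;=\; n^r\, e_r(x_1,\dots,x_t), \]
where $e_r$ is the $r$-th elementary symmetric polynomial. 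In particular the hypothesis $|\mc{B}|\geq(\pl{e}(t,r)-\delta)n^r$ is equivalent to $e_r(x)\geq \pl{e}(t,r)-\delta$.

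The analytic input we need is that $e_r$, restricted to the compact simplex $\Delta:=\{x\in\mathbb{R}_{\geq0}^t:\sum_i x_i=1\}$, attains its maximum $\pl{e}(t,r)=\binom{t}{r}t^{-r}$ at the barycenter $c=(1/t,\dots,1/t)$, and that this maximizer is \emph{unique}. On the interior of $\Delta$ this is Maclaurin's inequality $\bigl(e_r/\binom{t}{r}\bigr)^{1/r}\leq e_1/t=1/t$, with equality only if all coordinates are equal; on any proper face of $\Delta$ some coordinate vanishes, and there $e_r$ equals the $r$-th symmetric polynomial in $t-1$ variables, whose maximum $\binom{t-1}{r}(t-1)^{-r}$ is strictly smaller than $\pl{e}(t,r)$ (for $r\geq 2$, e.g.\ by Bernoulli's inequality). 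Given uniqueness, a routine compactness argument yields: for every $\eps>0$ there is $\delta>0$ such that $e_r(x)\geq \pl{e}(t,r)-\delta$ forces $\|x-c\|_\infty\leq\eps$. Indeed, otherwise one could pick $x^{(k)}\in\Delta$ with $e_r(x^{(k)})\to\pl{e}(t,r)$ but $\|x^{(k)}-c\|_\infty>\eps$, and a convergent subsequence would have a limit in $\Delta$ that maximizes $e_r$ yet differs from $c$.

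Combining the two steps finishes the proof: given $\eps$, choose $\delta$ as above (and $n_0$ arbitrary, e.g.\ $n_0=1$). If $\pl{v}(\mc{B})=n\geq n_0$ and $|\mc{B}|\geq(\pl{e}(t,r)-\delta)n^r$, then $e_r(x)\geq \pl{e}(t,r)-\delta$, hence $|x_i-1/t|\leq\eps$ for all $i$, i.e.\ $\bigl||P_i|-n/t\bigr|\leq \eps n$, which is the defining condition for $\mc{B}$ to be $\eps$-balanced. There is no genuinely hard step; the only points demanding a little care are the uniqueness of the maximizer of $e_r$ on $\Delta$ — which is precisely why the boundary faces must be handled separately from the Maclaurin bound on the interior — and the bookkeeping that passes between the normalized weights $x_i$ and the part sizes $|P_i|$.
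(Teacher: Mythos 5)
Your proof is correct, and since the paper explicitly omits the proof of Lemma~\ref{sizelem} as routine, there is no official argument to compare against; your route (rewriting $|\mc{B}|=n^r e_r(x_1,\dots,x_t)$ with $x_i=|P_i|/n$, showing the barycenter is the unique maximizer of $e_r$ on the simplex, then a compactness argument) is a natural way to fill the gap. One step should be tightened: you bound $e_r$ on a proper face by $\binom{t-1}{r}(t-1)^{-r}$, but that is exactly the optimization statement in $t-1$ variables, so as written it is mildly circular; either induct on $t$, or avoid the interior/boundary split altogether by applying Maclaurin to the nonzero coordinates: if $x$ in the simplex has support of size $s\ge r$, then $e_r(x)\le \binom{s}{r}s^{-r}\le \binom{t}{r}t^{-r}$, where the second inequality is strict for $s<t$ because $\binom{m}{r}m^{-r}=\frac{1}{r!}\prod_{j=1}^{r-1}(1-j/m)$ is strictly increasing in $m$ for $r\ge 2$, and equality in the first forces all nonzero coordinates to be equal (the case $s<r$ gives $e_r(x)=0$ and is trivial); this yields uniqueness of the maximizer in one stroke. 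Also, you correctly read ``$\eps$-balanced'' as $\bigl||P_i|-n/t\bigr|\le \eps n$: the displayed definition literally says $\le\eps$, but under that reading no fixed $\delta$ could work (an imbalance of order $\sqrt{n}$ between two parts changes the edge count by only $O(n^{r-1})$), and the $\eps n$ normalization is the one actually used later, e.g. in the bound $(\tfrac1t+\eps_{\ref{sizelem}}+\delta_1)^{r-1}n^{r-1}$ on link sizes in the proof of Lemma~\ref{links}. Taking $n_0=1$ is fine; the $n_0$ in the statement is not needed for this argument.
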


We also need  the following two auxilliary lemmas.

\begin{lem}\label{transversal}For given \(t\geq r \geq 2\), let $(\mc{H},F)$ be freely \(t\)-critical. Then there exist \(\eps>0\) and \(n_0\in \mathbb{N}\) such that the following holds. Let \(\mc{F}\) be an \(\mc{H}\)-free \(r\)-graph with \(\pl{v}(\mc{F}) =n\geq n_0\) vertices, and let \(\mc{B}\in\mf{B}(\mc{K}_t^{(r)})\) with \(\pl{v}(\mc{B}) = n\) and the blowup partition \(\mc{P} = \{P_1, P_2, \dots, P_t\}\). If  \(|L_{\mc{F}}(v) \triangle L_{\mc{B}}(v)| \leq \eps n^{r-1}\) for every \(v\in V(\mc{F})\), and \(|\mc{F}|\geq (\pl{e}(t,r) - \eps) n^r\), then \(\mc{F}\) is \(\mc{P}\)-transversal. Moreover, if \(\mc{F}'\) is an \(\mc{H}\)-free \(r\)-graph such that \(\mc{F}\subseteq \mc{F}'\), then \(\mc{F}'\) is \(\mc{P}\)-transversal.
\end{lem}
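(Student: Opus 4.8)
The goal is to show that under the stated hypotheses, every edge of $\mc{F}$ (and even of any $\mc{H}$-free superhypergraph $\mc{F}'$) is $\mc{P}$-transversal, i.e.\ meets each $P_i$ in at most one vertex. The natural strategy is to argue by contradiction: suppose some edge $E$ of $\mc{F}'$ has two vertices $x,y$ in the same part $P_i$. We then want to build a copy of $\mc{H}$ inside $\mc{F}'$, which is forbidden. The building blocks are: the edge $E$ itself (playing a role adjacent to the critical edge $F$ of $\mc{H}$), a large "clean" portion of $\mc{F}$ that is almost all of $\mc{K}_t^{(r)}(n)$ and contains an embedding of $\mc{H}\setminus F$, and the link structure of $x$ (or $y$) inside $\mc{F}$, which is close to $L_{\mc{B}}(x)\cong\mc{K}^{(r-1)}_{t-1}$ by the closeness hypothesis. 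The key point is that $x$ and $y$ lie in the same part, so together with the remaining $r-2$ vertices of $E$ they "use up" only $t-2+\,(\text{at most }r-2)$ colors — leaving us needing to embed the rest of $\mc{H}$ using the critical vertex $v$ mapped onto the part $P_i$.

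**Key steps, in order.** First I would choose $\eps>0$ small enough that Lemma~\ref{sizelem} applies and $\mc{B}$ is $\eta$-balanced for a suitably tiny $\eta$; this guarantees each part $P_j$ has roughly $n/t$ vertices. Second, I would pass to a large well-behaved subset: since $|L_{\mc{F}}(v)\triangle L_{\mc{B}}(v)|\le\eps n^{r-1}$ for \emph{every} $v$, a counting/deletion argument produces, for any bounded number of "target" vertices, a choice of representatives $w_1,\ldots,w_{r}$ — one landing in each of a prescribed set of $r$ distinct parts — whose common link behaves like that in $\mc{B}$; more generally one embeds $\mc{H}\setminus v$ greedily into the transversal part of $\mc{F}$ using that $\mc{F}$ has density $\ge(\pl{e}(t,r)-\eps)n^r$, so only a $o(1)$ fraction of transversal $r$-sets are non-edges. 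Third, and this is where the definition of \emph{freely $t$-critical} enters essentially: $F$ has $r-2$ free vertices contained in no other edge of $\mc{H}$, so the copy of $\mc{H}$ we are assembling only needs those $r-2$ vertices to extend the single edge through the two critical vertices — and we can take those to be the $r-2$ "other" vertices of the bad edge $E$. Fourth, I would embed the two critical vertices of $(\mc{H},F)$: one of them is mapped to a generic vertex used in the embedding of $\mc{H}\setminus F$, and the other is mapped to $x$ (or $y$), using that $L_{\mc{F}}(x)$ still contains almost all transversal $(r-1)$-sets avoiding $P_i$, which is exactly the structure needed to realize the edges of $\mc{H}$ through that critical vertex other than $F$. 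Assembling these pieces gives a copy of $\mc{H}$ in $\mc{F}'$, a contradiction; hence $\mc{F}'$, and in particular $\mc{F}$, is $\mc{P}$-transversal.

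**Main obstacle.** The delicate part is the bookkeeping in the embedding: one must ensure the partial embedding of $\mc{H}\setminus F$ (which lives in the transversal part of $\mc{F}$ and uses all $t$ colors/parts), the bad edge $E$ (which forces two of its vertices into one part, $P_i$), and the link of the critical vertex $x$ together fit together consistently — that is, that we can choose the generic embedding of $\mc{H}\setminus\{v,\text{free vertices}\}$ to be \emph{disjoint} from $E$ and to respect a colouring under which $v$'s colour is $i$, and simultaneously that the required $(r-1)$-sets in $L_{\mc{H}}(v)$ all survive in $L_{\mc{F}}(x)$. This is exactly why the hypothesis is phrased per-vertex ($|L_{\mc{F}}(v)\triangle L_{\mc{B}}(v)|\le\eps n^{r-1}$ for all $v$): it lets us delete the $O(\eps n^{r-1})$ "wrong" $(r-1)$-sets around each of the boundedly many vertices we want to use and still have room, since each part has $\Theta(n)$ vertices and $\mc{H}$ is of bounded size. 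A mild subtlety is that we only need $F$ to be freely $t$-critical here (conditions (ii)–(iii) of the $t$-spike definition are not yet invoked), so the embedding of $L_{\mc{H}}(v)$ can be arbitrary among transversal sets avoiding colour $i$; the finer control over which $(r-1)$-sets appear is what conditions (ii)–(iii) will buy us in the subsequent lemmas, not here. I would therefore keep this proof to the transversality statement only, and expect the whole argument to be a careful but routine greedy embedding once the balanced-ness and link-closeness are in hand.
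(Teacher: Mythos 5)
Your overall strategy (contradiction; use the near-complete $t$-partite structure plus the single bad edge $E$ to assemble a forbidden copy of $\mc{H}$, with the $r-2$ free vertices of the critical edge absorbed by $E\setminus\{x,y\}$) is the same as the paper's, which implements the embedding by randomly sampling $m$ vertices per part and showing that with positive probability all $\mc{P}$-transversal tuples among the sample together with $x,y$ are edges. But your fourth step contains a genuine error. The critical edge $F$ of $\mc{H}$ can only be realized by the bad edge $E$ itself: every other edge of $\mc{H}$ is realized by (near-)transversal edges of $\mc{F}$, and if all edges of the copy were transversal the part indices would give a strong $t$-coloring of $\mc{H}$, which does not exist. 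Hence \emph{both} critical vertices of $(\mc{H},F)$ must be mapped into $E$, and since any strong $t$-coloring of $\mc{H}\setminus F$ necessarily assigns the two critical vertices the same colour (otherwise recolouring the isolated free vertices with $r-2$ fresh colours, possible as $t\ge r$, would strongly $t$-colour $\mc{H}$), they must land on two vertices of $E$ lying in one part, i.e.\ essentially on $x$ and $y$. Your plan instead sends one critical vertex to a ``generic vertex used in the embedding of $\mc{H}\setminus F$'' and even insists that the embedding of $\mc{H}\setminus\{v,\text{free vertices}\}$ be disjoint from $E$; then the image of $F$ is $(E\setminus\{x,y\})\cup\{x,z\}$ for some generic $z\neq y$, which is not $E$ and need not be an edge of $\mc{F}'$ at all, so the assembled vertex map is not a homomorphic copy of $\mc{H}$.

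The repair is exactly what the paper does: map the two critical vertices to $x$ and $y$, the free vertices to $E\setminus\{x,y\}$ (whose links are irrelevant, and which may even lie outside $V(\mc{F})$ in the ``moreover'' case), fix a strong $t$-colouring of $\mc{H}\setminus F$ (after permuting colours it puts both critical vertices in colour $i$), and embed the remaining vertices greedily/randomly into the corresponding parts, avoiding $E$. This forces you to use the hypothesis $|L_{\mc{F}}(w)\triangle L_{\mc{B}}(w)|\le\eps n^{r-1}$ at \emph{both} $x$ and $y$ (both lie in $V(\mc{F})$ since they belong to a part of $\mc{P}$), not just at $x$ as in your write-up; the edges of $\mc{H}\setminus F$ through either critical vertex are then realized by transversal $(r-1)$-sets in $L_{\mc{F}}(x)$, respectively $L_{\mc{F}}(y)$, avoiding $P_i$. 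With this correction the remaining bookkeeping (balancedness via Lemma~\ref{sizelem}, the density hypothesis guaranteeing that almost all transversal tuples are edges, and the observation that conditions (ii)--(iii) of the spike definition are not needed here) is routine and matches the paper.
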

\begin{proof}
Choose $\eps_{\ref{sizelem}} \ll \min\{ \frac{1}{t},\frac{1}{\pl{v}(H)}\}$, and let \(\delta_{\ref{sizelem}}<\eps_{\ref{sizelem}} \) be chosen to satisfy  Lemma~\ref{sizelem} applied with \(\eps = \eps_{\ref{sizelem}}\).  Choose $\eps \ll \min\{\delta_{\ref{sizelem}},\frac{1}{m}\}$.
We have
\[|\mc{F}\triangle \mc{B} |  = \frac{1}{r} \sum_{v\in V(\mc{F})}{|L_{\mc{F}}(v) \triangle L_{\mc{B}}(v) |} \leq \frac{\eps}{r} n^r.\]
Therefore
\[|\mc{B}| \geq |\mc{F} | - |\mc{F} \triangle \mc{B}| \geq \left(\pl{e}(t,r) - \eps \left(1+ \frac{1}{r}\right)\right) n^r \geq (\pl{e}(t,r)  - \delta_{\ref{sizelem}}) n^r ,\]
and Lemma ~\ref{sizelem} implies that \(\mc{B}\) is \(\eps_{\ref{sizelem}}\)-balanced.

It suffices to verify only the last conclusion of the lemma. We assume, for a contradiction, that there exists a non-transversal edge \(F \in \mc{F'}\), with \(v_1,v_2\in F\cap {P_j}\) for some  \(j\). Let $m =|V(H)|+r-2$.
We will  show that \(\mc{F}\) contains a copy of  the complete \(t\)-partite $r$-graph with \(m\) vertices in each part, with $v_1$ and $v_2$ lying in the same part of this copy. Together with \(F\) this copy will induce a copy \(\mc{H}\), yielding the desired contradiction.  Without loss of generality, assume that \(j=1\).

Sample \(m\) distinct vertices from  \(P_i \setminus F\) uniformly at random for every \(i=2, \dots, t\), and sample $m-2$ vertices from $P_1 \setminus F$. Let \(\mc{R}\) be the subgraph of \(\mc{F}\) induced by these vertices, $v_1$ and $v_2$. It suffices to show that with a positive probability every $\mc{P}$-transversal $r$-tuple $I \subseteq V(\mc{R})$ is an edge of $\mc{F}$.
Therefore it is enough to show that 
\begin{itemize}
	\item If $I$ is a set of $r-1$ vertices sampled uniformly at random from distinct parts of $\mc{P} -\{P_1\}$ then $$\brm{P}[I \cup \{v_i\} \not \in \mc{F}] < \frac{1}{4t^{r-1}m^{r-1}}$$
	for $i=1,2$, and
	\item If $I$ is a set of $r$ vertices sampled uniformly at random from distinct parts of $\mc{P}$ then $$\brm{P}[I \not \in \mc{F}] < \frac{1}{4t^{r}m^{r}}$$
	for $i=1,2$.  
\end{itemize}
Both statements are routine. As $|L_{\mc{F}}(v_i) \triangle L_{\mc{B}}(v_i)| \leq \eps n^{r-1}$ and $\mc{B}$ is \(\eps_{\ref{sizelem}}\)-balanced, it follows that  $$1 - \frac{|L_{\mc{F}}(v_i) \cap L_{\mc{B}}(v_i)|}{|L_{\mc{B}}(v_i)|} \ll \frac{1}{t^{r}m^{r}},$$
and thus the probability that a transversal $r$-tuple containing $v_i$ is not in $\mc{F}$ is sufficiently small. As $|\mc{F}\triangle \mc{B} | \ll \frac{n^r}{t^{r}m^{r}}$, the second statement similarly follows.
\end{proof}

\begin{lem}\label{links} Let $(\mc{H},F,v)$ be a  \(t\)-spike. For all \(\eps>0\) there exists \(\delta >0\) and \(n_0\in \mathbb{N}\) such that the following holds. If \(\mc{F}\) is an \(\mc{H}\)-free \(r\)-graph with \(\pl{v}(\mc{F})=n\geq n_0\), \(d_{\mf{B}}(\mc{F}) \leq \delta n^r\), \(|\mc{F}|\geq (\pl{e}(t,r) - \delta)n^r\) ,  and \(|L_{\mc{F}}(v)| \geq (d(t,r) - \delta) n^{r-1}\) for every \(v\in V(\mc{F})\), then there exists \(\mc{B}_0\in \mf{B}(\mc{K}_{t}^{(r)})\) with \(\pl{v}(\mc{B}_0) = n\) such that for every \(v\in V(\mc{F})\)
\[|L_{\mc{F}}(v) \triangle L_{\mc{B}_0}(v)|\leq \eps n^{r-1}. \] 
\end{lem}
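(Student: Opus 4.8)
The plan is to combine Lemma~\ref{transversal} with the three defining properties of a $t$-spike to first locate the link of each ``misplaced'' vertex and then reassign it to the correct part.

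First I would unpack the hypotheses and isolate the misplaced vertices. Since $d_{\mf{B}}(\mc{F})\le\delta n^r$ with $\mf{B}=\mf{B}(\mc{K}_t^{(r)})$, fix $\mc{B}\in\mf{B}(\mc{K}_t^{(r)})$ with $\pl{v}(\mc{B})=n$ and $|\mc{F}\triangle\mc{B}|\le\delta n^r$, and let $\mc{P}=\{P_1,\dots,P_t\}$ be its blowup partition. As $|\mc{B}|\ge|\mc{F}|-|\mc{F}\triangle\mc{B}|\ge(\pl{e}(t,r)-2\delta)n^r$, Lemma~\ref{sizelem} shows, for $\delta$ small and $n$ large, that $\mc{B}$ is $\eps_0$-balanced for a constant $\eps_0$ of our choosing; in particular $|P_i|=n/t+O(1)$, $\mc{B}$ has $\pl{e}(t,r)n^r+O(n^{r-1})$ transversal $r$-tuples, and $|L_{\mc{B}}(v)|=\pl{d}(t,r)n^{r-1}+O(n^{r-2})$ for every $v$. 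Since $\sum_v|L_{\mc{F}}(v)\triangle L_{\mc{B}}(v)|=r|\mc{F}\triangle\mc{B}|\le r\delta n^r$, at most $c_1 n$ vertices $v$ satisfy $|L_{\mc{F}}(v)\triangle L_{\mc{B}}(v)|>\eps_1 n^{r-1}$, where $c_1=r\delta/\eps_1$ can be made as small as we wish by choosing $\eps_1$ first and then $\delta$. Call this set of vertices $U$ and its members \emph{bad}, the remaining vertices \emph{good}. The aim is to produce $\mc{B}_0$ by keeping every good vertex in its part $P_i$ and assigning each bad vertex to an appropriate part.

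Next I would describe the link of a single bad vertex. Put $\mc{F}_1:=\mc{F}-U$, $\mc{B}_1:=\mc{B}-U$, with induced partition $\mc{P}_1$; deleting $U$ removes at most $c_1 n^r$ edges, so $|\mc{F}_1|\ge(\pl{e}(t,r)-\delta-c_1)n^r$, and every (good) $w\in V(\mc{F}_1)$ has $|L_{\mc{F}_1}(w)\triangle L_{\mc{B}_1}(w)|\le(\eps_1+2c_1)n^{r-1}$. Provided $\eps_1,c_1$ are small relative to the constant supplied by Lemma~\ref{transversal} for the pair $(\mc{H},F)$, which is freely $t$-critical by (i), Lemma~\ref{transversal} applies: $\mc{F}_1$ is $\mc{P}_1$-transversal, and so is every $\mc{H}$-free $r$-graph containing it. Applying the ``moreover'' part to $\mc{F}_1$ together with $u$ and the edges of $\mc{F}$ through $u$ that avoid $U$, we get, for each $u\in U$, that the link $L_u$ of $u$ in this $r$-graph is $\mc{P}_1$-transversal, that $L_{\mc{F}}(u)\triangle L_u$ is supported on $(r-1)$-tuples meeting $U$ and hence has size $O(c_1 n^{r-1})$, and that $|L_u|\ge|L_{\mc{F}}(u)|-O(c_1 n^{r-1})\ge(\pl{d}(t,r)-\delta-O(c_1))n^{r-1}$. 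Moreover, being $\mc{P}_1$-transversal with this many edges, $\mc{F}_1$ misses only $O((\delta+c_1)n^r)$ of the transversal $r$-tuples, so by an argument as in the proof of Lemma~\ref{transversal} (sampling blocks at random) it contains, for any fixed $N$ and $n$ large, a copy $\mc{C}\cong\mc{K}_t^{(r)}(Nt)$ of the complete $t$-partite $r$-graph with $N$ vertices in each part, refining $\mc{P}_1$; and we may additionally require of this random $\mc{C}$ that for every $u\in U$ and every $(r-1)$-element set of parts met by at least $\gamma(n/t)^{r-1}$ tuples of $L_u$ (for a small constant $\gamma$, see below), $\mc{C}$ contains $\pl{v}(\mc{H})$ pairwise disjoint $(r-1)$-tuples of $L_u$ meeting exactly that set of parts.

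The heart of the matter, and the step I expect to be the main obstacle, is the following dichotomy for each $u\in U$. Call the set of parts of $\mc{P}_1$ met by a $\mc{P}_1$-transversal $(r-1)$-tuple its \emph{pattern}, an element of $[t]^{(r-1)}$, and let $\mc{S}_u\subseteq[t]^{(r-1)}$ be the set of patterns $S$ realized by at least $\gamma(n/t)^{r-1}$ tuples of $L_u$. Since each pattern is realized by $(n/t)^{r-1}(1+O(1/n))$ tuples in total and the patterns outside $\mc{S}_u$ contribute fewer than $\binom{t}{r-1}\gamma(n/t)^{r-1}$ tuples, the lower bound $|L_u|\ge\binom{t-1}{r-1}(n/t)^{r-1}-(\delta+O(c_1))n^{r-1}$ forces $|\mc{S}_u|\ge\binom{t-1}{r-1}$ once $\gamma,c_1,\delta$ are small and $n$ is large. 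If $\mc{S}_u$ is isomorphic to $\mc{K}_{t-1}^{(r-1)}$ then $\mc{S}_u=\{S\in[t]^{(r-1)}:i_u\notin S\}$ for a unique $i_u\in[t]$, and I would assign $u$ to $P_{i_u}$. Otherwise, apply (iii) with $\mc{S}:=\mc{S}_u$ to get $\varphi\colon V(\mc{H})\to[t]$ as in (iii-a), (iii-b), and embed $\mc{H}$ into $\mc{F}$ with $v\mapsto u$ as follows, yielding a contradiction: map $\mc{H}\setminus v$ into $\mc{C}$ along $\varphi$, sending each vertex into the block indexed by its colour; by (iii-a) every edge of $\mc{H}\setminus v$ is $\mc{P}_1$-transversal, hence automatically an edge of $\mc{C}\subseteq\mc{F}_1$; by (iii-b), $\varphi(I)\in\mc{S}_u$ for every $I\in L_{\mc{H}}(v)$, and since $L_{\mc{H}}(v)$ is a matching by (ii) the sets $I$ are pairwise disjoint, so the vertices of $\bigcup_{I\in L_{\mc{H}}(v)}I$ can be placed greedily, one link edge at a time, with each $I$ mapped to one of the prescribed pairwise disjoint $L_u$-tuples of pattern $\varphi(I)$ inside $\mc{C}$. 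Then the image of $I\cup\{v\}$ is $\psi(I)\cup\{u\}\in\mc{F}$ for every $I\in L_{\mc{H}}(v)$, so $\mc{H}\subseteq\mc{F}$, contradicting $\mc{H}$-freeness. Hence every $u\in U$ falls into the first case and obtains a well-defined index $i_u$; the delicate points here are the simultaneous control of completeness of $\mc{C}$ and of the supply of disjoint $L_u$-tuples of each pattern in $\mc{S}_u$ via a single random choice of $\mc{C}$, and the bookkeeping of the greedy embedding of the matching $L_{\mc{H}}(v)$ alongside the rest of $\mc{H}\setminus v$ — which is exactly where (i), (ii) and (iii) are used together.

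Finally I would assemble $\mc{B}_0\in\mf{B}(\mc{K}_t^{(r)})$ with vertex set $V(\mc{F})$ whose blowup partition is $\mc{P}_1$ with each $u\in U$ added to $P_{i_u}$. Since $|U|=O(c_1 n)$ this partition differs from $\mc{P}$ on only $O(c_1 n)$ vertices, so $|L_{\mc{B}_0}(w)\triangle L_{\mc{B}}(w)|=O(c_1 n^{r-1})$ for all $w$; for a good $w$ this gives $|L_{\mc{F}}(w)\triangle L_{\mc{B}_0}(w)|\le\eps_1 n^{r-1}+O(c_1 n^{r-1})\le\eps n^{r-1}$. For a bad $u$, $L_{\mc{B}_0}(u)$ consists precisely of the transversal $(r-1)$-tuples of pattern in $\{S:i_u\notin S\}=\mc{S}_u$; since $L_u$ is $\mc{P}_1$-transversal with at most $\binom{t}{r-1}\gamma(n/t)^{r-1}$ tuples of pattern outside $\mc{S}_u$ and $|L_u|\ge(\pl{d}(t,r)-\delta-O(c_1))n^{r-1}$, a counting argument gives $|L_u\triangle L_{\mc{B}_0}(u)|=O((\gamma+\delta+c_1)n^{r-1})$, and combined with $|L_{\mc{F}}(u)\triangle L_u|=O(c_1 n^{r-1})$ this yields $|L_{\mc{F}}(u)\triangle L_{\mc{B}_0}(u)|\le\eps n^{r-1}$. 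Choosing the constants in the order $\eps\gg\eps_1,\gamma\gg c_1\gg\delta$, and then $n_0$ large, makes every estimate above valid and furnishes the desired $\mc{B}_0$.
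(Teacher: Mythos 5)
Your proposal follows essentially the same route as the paper's proof: fix a nearest blowup $\mc{B}$, isolate the $O(\delta/\gamma)\cdot n$ vertices with atypical links, apply Lemma~\ref{transversal} to the restriction to get transversality (also of the links of the bad vertices, via the ``moreover'' clause), build for each bad $u$ the pattern $(r-1)$-graph of dense link-patterns, use the spike condition (iii) together with (ii) to rule out every pattern graph other than $\mc{K}_{t-1}^{(r-1)}$ by embedding $\mc{H}$ with $v\mapsto u$, reassign $u$ to the unique missing part, and finish with the same link-counting. The only substantive divergence is the implementation of the embedding, and this is where your write-up has an unjustified step: you ask for a single random constant-size complete $t$-partite subgraph $\mc{C}$ that \emph{simultaneously}, for every $u\in U$ and every dense pattern, contains $\pl{v}(\mc{H})$ pairwise disjoint $L_u$-tuples of that pattern. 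The per-$u$, per-pattern failure probability of a random $\mc{C}$ tends to $0$ only as $N\to\infty$; it is not $o(1/n)$, while $|U|$ may be linear in $n$, so no union bound gives the simultaneous statement, and it need not hold. This does not break the proof, because your dichotomy is applied one $u$ at a time: for a fixed $u$ it suffices that with positive probability $\mc{C}$ is complete (probability close to $1$ once $\delta,c_1\ll N^{-r}$) and supplies the disjoint tuples (probability bounded away from $0$, or $1-o_N(1)$), so choose $\mc{C}$ separately for each $u$. For comparison, the paper avoids the auxiliary $\mc{C}$ altogether: it embeds $\mc{H}$ by a random map $\psi$ sending each $w\in V(\mc{H})\setminus v$ uniformly into $P_{\varphi(w)}$, bounds the failure probability of the non-link edges by $O(\gamma)$, and uses condition (ii) (the link of $v$ is a matching) to make the link-edge events independent, so that a per-edge success probability of $\beta$ suffices; your greedy placement onto disjoint tuples uses (ii) for exactly the same purpose, just deterministically after the sampling.
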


\begin{proof}Let $\eps_{\ref{transversal}}$ be chosen to satisfy Lemma~\ref{transversal}. We choose $$ 0<\delta\ll\eps_{\ref{sizelem}} \ll \gamma \ll \beta\ll\min\left\{\eps_{\ref{transversal}},\eps,\frac{1}{\pl{v}(\mc{H})}\right\}$$ to satisfy the constraints appearing further in the proof.
Let $\delta_{\ref{sizelem}}$ be chosen to satisfy Lemma~\ref{sizelem} applied with $\eps=\eps_{\ref{sizelem}}$. In particular we choose \(\delta\) such that 
$\delta \ll \delta_{\ref{sizelem}}$.

Let \(\mc{B}\in \mf{B}\) be such that \(|\mc{F}\triangle \mc{B}| = d_{\mf{B}}(\mc{F})\), and let \(\mc{P}=\{P_1, P_2, \dots, P_t\}\) be the blowup partition of \(\mc{B}\).  Since 
\[|\mc{F}|\geq (\pl{e}(t,r) - \delta) n^r \geq (\pl{e}(t,r) - \delta_{\ref{sizelem}}) n^r ,\]
it follows that \(\mc{B}\) is \(\eps_{\ref{sizelem}}\)-balanced by Lemma~\ref{sizelem}. Consider the set 
\[J:=\{v\in V(\mc{F})| |L_{\mc{F}}(v) \triangle L_{\mc{B}}(v)| > \gamma n^{r-1}\}.\]
Let $\delta_1 = \frac{\delta r}{\gamma}$. It is easy to see that \(|J| \leq \delta_1 n\).  

Let $\mathcal{F}' := \mathcal{F}|_{V(\mc{F})\setminus J}$, $n'=\pl{v}(\mc{F}')$, $\mathcal{B}':= \mathcal{B}|_{V(\mc{F})\setminus J}$, $P_j':=P_j\setminus J$ for each $j\in[t]$, and $\mc{P}'=\{P'_1, P'_2, \dots, P'_t\}$. The graph \(\mc{F}'\) satisfies the assumptions of Lemma~\ref{transversal}. Indeed, for every \(v\in V(\mc{F}')\),
\[| L_{\mc{F}'}(v)\triangle  L_{\mc{B}'}(v)| \leq \gamma n^{r-1} \leq\eps_{\ref{transversal}} (1-\delta_1)^{r-1} n^{r-1}
\leq
\eps_{\ref{transversal}} (n')^{r-1}.\]
Similarly,
$|\mc{F}'|\geq (\pl{e}(t,r) -\eps_{\ref{transversal}}) (n')^{r-1} 
$.
Thus  $\mc{F}$ is $\mc{P}$-transversal by Lemma~\ref{transversal}.

 Our next goal is to extend \(\mc{B}'\) to a blowup \(\mc{B}_0\) of $\mc{K}_t^{(r)}$ with $V(\mc{B}_0)=V(\mc{F})$, as follows. For each $u\in J$  we will find a unique index $i_{u} \in [t]$, such that $u$ behaves as the vertices in the partition class $P_{i_u}'$, and add the vertex \(u\) to this partition class. 

Consider \(u \in J\).
For $I \subseteq [t]$, let $$E_I(u):=\{F \in \mc{F} \:  | u\in F, \: |F \cap P'_i| =1 \: \mathrm{for\: every}\: i \in I  \}.$$
We construct an auxiliary   $(r-1)$-graph  $\mc{S}=\mc{S}(u)$ with $V(\mc{S})=[t]$ such that $I\in \mc{S}$ if and only if $\left|E_I(u)\right|\geq \beta n^{r-1}.$
We aim to show that there exists a unique $j_u \in [t]$ such that $\mc{S}$ is isomorphic to the link graph of $j_u$ in $\mc{K}_{t}^{(r)}$. 
It is easy to see that  \(|\mc{S}| \geq d(t,r) t^{r-1} = {t-1 \choose r-1}\), as long as \(\eps_{\ref{sizelem}}\), \(\delta\), \(\delta_1\) and \(\beta\) are sufficiently small compared to \(\frac{1}{t^r}\). 

\begin{claim}\label{link}\(\mc{S}\) is isomorphic to $\mc{K}_{t-1}^{(r-1)}$.
\end{claim}

\begin{proof} Suppose not. Let  \(\varphi: V(\mc{H})\rightarrow [t]\) be as in the definition of the $t$-spike $(H,F,v)$. Let $\psi: V(\mc{H}) \rightarrow V(\mc{F})$ be a random map such that $\psi(v)=u$, and  let $\psi(w)$ be chosen uniformly at random in $P_{\varphi(w)}$ for every $w \in V(\mc{H}) \setminus v$. We will show that, with probability bounded away from zero as a function of $\beta$ and independent on $n$, the map $\psi$ maps all edges of $\mc{H}$ to edges of $\mc{F}$. It will follow that $\mc{F}$ is not $\mc{H}$-free yielding the desired contradiction. If $I \in \mc{H}$, $v \not \in I$ then $\brm{P}[ \psi(I) \notin \mc{F}] \leq 2\gamma t^r$ , as in Lemma~\ref{transversal}. Thus, $$\brm{P}[ \psi(I) \notin \mc{F} \; \mathrm{for\; some} \; I\in \mc{H} \; \mathrm{such \;that} \; v \not \in I ] \leq 2\gamma \pl{v}(\mc{H})^r t^r \ll  \beta.$$ 
If $I \in L_{\mc{H}}(v)$ then $\brm{P}[\psi(I \cup \{v\}) \in \mc{F}] \geq \beta$. As $L_{\mc{H}}(v)$ is a matching it follows that the events $\{\psi(I \cup \{v\} \}) \in \mc{F} \}_{I  \in L_{\mc{H}}(v)}$ are independent. Thus $$\brm{P}[\psi(I) \in \mc{F}\; \mathrm{for\; every} \; I \in L_{\mc{H}}(v)] \geq \beta^{| L_{\mc{H}}(v)|}.$$
The desired conclusion follows. 
\end{proof}

By Claim~\ref{link}, for every \(u\in J\) there exists unique \(i_u\in [t]\) such that for every \(I\in {t \choose r-1}\) with \(i_u\in I\), \(|E_{I}(u)| < \beta n^{r-1}\). Now we are ready to extend the blowup \(\mc{B}'\) to a blowup with partition \(\mc{P}^{*}\) as following. For every \(i\in [t]\), define $P_{i}^{*}:=P_i'\cup \{u\in J \: |\:  i_u = i\}.$ Let $\mc{B}_0 \supseteq \mc{B}'$ be the blowup of $\mc{K}_{t}^{(r)}$ with the blowup partition \(\mc{P}^*=\{P_1^*,\ldots,P_t^*\}\).  
 It remains to show that
$| L_{\mc{B}_{0}}{(v)}\triangle  L_{\mc{F}}(v)|\leq \eps n^{r-1}$
for every \(v\in V(\mc{F})\).
For each $v\in V(\mc{F})\setminus J$, we have 
\begin{align*}| L_{\mc{B}_{0}}{(v)}\triangle  L_{\mc{F}}(v)|&\leq | L_{\mc{B}'}{(v)}\triangle  L_{\mc{F'}}(v)| + |J|n^{r-2}\\ &\leq \gamma n^{r-1} + \delta_1 n^{r-1} \leq \eps n^{r-1}.
\end{align*}
We now consider $v\in J$. As we observed earlier, $\mc{F}$ is $\mc{P}'$-transversal, therefore for every $F \in  L_{\mc{F} \setminus \mc{B}_0}(v)$, either either $F$ contains a vertex from \(J\) or there exists $I \in [t]^{(r-1)}$, $I \notin \mc{S}(v)$ such that $F \cup \{v\} \in E_I(v)$. Thus,
$$ | L_{\mc{F} \setminus \mc{B}_0}(v) |
 \leq \left(\delta_1 + { t-1\choose r-2} \beta\right) n^{r-1} \leq \frac{\eps}{4}n^{r-1}.
$$
Finally,
\begin{align*} |& L_{\mc{F}}(v)\triangle  L_{\mc{B}_0}(v)| = 
2|  L_{\mc{F} \setminus\mc{B}_0}(v) | + | L_{\mc{B_0}}(v)| - | L_{\mc{F}}(v)| \\ &\leq \frac{\eps}{2}n^{r-1} +t^{r-1}\pl{d}(t,r) \left(\frac{1}{t}+\eps_{\ref{sizelem}}+\delta_1\right)^{r-1}n^{r-1}-(\pl{d}(t,r)-\delta)n^{r-1}\\&\leq \eps n^{r-1},
\end{align*}
as desired.
\end{proof}

\begin{proof}[Proof of Theorem~\ref{vertexlocalstab}:] Let $(\mc{H},F,v)$ be a  \(t\)-spike, and let \(\mf{B}= \mf{B}(\mc{K}_t^{(r)})\). We want to show that there exist \(\eps, \alpha, n_0>0\) such that for every \(\mc{F}\ \in \brm{Forb}(\mc{H})\) with \(\pl{v}(\mc{F}) = n\geq n_0\), such that \(d_{\mf{B}}(\mc{F}) \leq \eps n^r\),  and \(|L_{\mc{F}}(v)|\geq (\pl{d}(t,r) - \eps) n^{r-1}\) for every \(v\in V(\mc{F})\), we have 
$|\mc{F}|\leq m(\mf{B}, n) - \alpha d_{\mf{B}}(\mc{F}).$

Let \(\eps_{\ref{transversal}}\) be chosen to satisfy Lemma~\ref{transversal}. Let \(\delta_{\ref{links}}\) be chosen to satisfy Lemma~\ref{links}, applied with \(\eps = \eps_{\ref{transversal}} \).  We show that \(\eps = \min\{\frac{\eps_{\ref{transversal}}}{r}, \frac{\delta_{\ref{links}}}{2}\}\) and \(\alpha = 1\) satisfy the desired conditions. We  assume that 
\[|\mc{F}|\geq (\pl{e}(t,r) - 2\eps) n^r \geq  (\pl{e}(t,r) - \delta_{\ref{links}}) n^r,\]
since otherwise the result holds. Thus by Lemma~\ref{links}, there exists some \(\mc{B}\in \mf{B}\) such that for every \(v\in V(\mc{F})\) we have 
$|L_{\mc{F}}(v) \triangle L_{\mc{B}}(v)| \leq \eps_{\ref{transversal}} n^{r-1}. $
Therefore \(\mc{F} \subseteq \mc{B}\) by Lemma~\ref{transversal},
and $|\mc{F}| =  |\mc{B}| - |\mc{B}\setminus \mc{F}|\leq  m(\mf{B}, n) - d_{\mf{B}}(\mc{F}),
$
as desired.
\end{proof}

\section{Proof of Theorem~\ref{thm:trees}}
\label{sec:trees}

Let \(\mc{T}\) denote the \((r-2)\)-expansion of the tree \(T\). By Corollary~\ref{cor:main}, it suffices to prove that  \(\brm{Forb}(\brm{Ext}(\mc{T}))\) is \(\mf{B}(\mc{K}_{t+r-3}^{(r)})\)-vertex locally stable and \(\brm{Forb}(\mc{T})\) is \(\mf{B}(\mc{K}_{t+r-3}^{(r)})\)-weakly weight stable. By Theorem~\ref{vertexlocalstab}, the following lemma accomplishes the first step.

\begin{lem} If  \(T\) is a tree on \(t \geq 3\) vertices,  then \(\brm{Ext}(\mc{T})\) is sharply \((t+r-3)\)-critical.
\end{lem}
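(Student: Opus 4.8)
The goal is to produce, for the $r$-graph $\mc{H} := \brm{Ext}(\mc{T})$, a choice of $F \in \mc{H}$ and $v \in F$ witnessing that $(\mc{H},F,v)$ is a $(t+r-3)$-spike, where $t = \pl{v}(T)$. I will take $F$ to be one of the edges of $\brm{Ext}(\mc{T})$ added through an uncovered pair of $\mc{T}$, and $v$ to be one of the two vertices of that pair that lie in the original tree $T$ (so that the other $r-2$ vertices of $F$ are the newly-added free vertices). The natural candidate: pick a leaf $\ell$ of $T$ with neighbor $w$, and a vertex $x$ of $T$ at distance $\geq 2$ from $\ell$ (exists since $t \geq 3$); the pair $\{\ell, x\}$ is uncovered in $\mc{T}$ (the only edges of $\mc{T}$ through $\ell$ are the expansions of the single tree-edge $\ell w$, which do not contain $x$), so $F = \{\ell, x\} \cup \{v^P_1,\dots,v^P_{r-2}\}$ is an edge of $\mc{H}$, and I will set $v := \ell$.

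The verification then breaks into the three conditions of a $t$-spike (with $t$ replaced by $t+r-3$). \textbf{Freely $(t+r-3)$-critical:} I must check (a) $\mc{H}$ is not strongly $(t+r-3)$-colorable but $\mc{H}\setminus F$ is, and (b) the $r-2$ free vertices of $F$ sit in no other edge. Part (b) is immediate from the construction of $\brm{Ext}$. For (a), the ``$\mc{H}\setminus F$ is colorable'' direction uses that $\brm{Ext}(\mc{T})\setminus F$ still has a proper strong $(t+r-3)$-coloring: the $t-2$ non-leaf-$\{\ell\}$... more precisely, I colour the $t-2$ ``core'' vertices (a suitable set) with distinct colours, reuse a colour on $\ell$, and give every set of free vertices fresh colours from the remaining $r-2$ colour classes, checking transversality of each expanded edge and each added edge. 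The ``$\mc{H}$ is \emph{not} strongly $(t+r-3)$-colorable'' direction is where the Erd\H{o}s--S\'os hypothesis should enter indirectly: a strong $(t+r-3)$-coloring of $\brm{Ext}(\mc{T})$ would, after identifying colour classes, force $\mc{T}$ (hence $T$) into $\mc{K}_{t+r-3}^{(r)}$ — but then $T$ would be strongly $(t+r-3)$-colorable as a subgraph in a way incompatible with it having $t$ vertices all needing distinct... I will instead argue directly: since $\brm{Ext}(\mc{T})$ covers all pairs on its (at least $t+r-3$... actually exactly enough) vertices that lie in $T \cup \{$free vertices of $F\}$, any strong colouring must give these $t+r-3$ vertices distinct colours, leaving no colour for the remaining free vertices, contradiction. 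This counting — that $T$ together with the $r-2$ free vertices of $F$ forms a pairwise-covered set of size exactly $t + r - 2 > t+r-3$ in $\brm{Ext}(\mc{T})$ — is the crux and must be done carefully.

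\textbf{Condition (ii), $L_{\mc{H}}(v)$ is a matching:} with $v = \ell$ a leaf, the edges of $\mc{H}$ through $\ell$ are exactly the $(r-2)$-expansion edges of the unique tree-edge $\ell w$ (all sharing $w$ and the $r-2$ expansion vertices of that edge — so that is a single edge, or a bounded family sharing vertices) together with the added $\brm{Ext}$-edges through each uncovered pair $\{\ell, y\}$; two such added edges share only $\ell$ (their free vertices are disjoint by construction, and $y \neq y'$), so the link minus $\{v\}$ consists of pairwise-disjoint $(r-1)$-sets — I should double check the expansion edge does not clash, which forces using the standard convention that the $(r-2)$-expansion of the single edge $\ell w$ contributes just one $(r-1)$-set to the link, disjoint from all the $\brm{Ext}$-free-vertex sets. \textbf{Condition (iii):} given $\mc{S}\subseteq[t+r-3]^{(r-1)}$ with $|\mc{S}|\geq\binom{t+r-4}{r-1}$ and $\mc{S}\not\cong\mc{K}_{t+r-4}^{(r-1)}$, I need a map $\varphi$ injective on edges of $\mc{H}\setminus v$ and sending every $I \in L_{\mc{H}}(v)$ into $\mc{S}$ with $|\varphi(I)| = r-1$. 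Here is where the Erd\H{o}s--S\'os property of $T$ is genuinely used: the hypothesis that $\mc{S}$ is large but not the full link $\mc{K}_{t+r-4}^{(r-1)}$ means the ``shadow'' of $\mc{S}$ omits a structure that, via the Erd\H{o}s--S\'os / Sidorenko embedding machinery, still leaves enough room to embed the tree $T$ (minus the leaf $\ell$) transversally into $[t+r-3]$ with the leaf-neighbourhood landing inside $\mc{S}$; the degree condition for Erd\H{o}s--S\'os-trees is exactly what guarantees the embedding survives the deletion of a near-complete $\mc{S}$. I expect \textbf{this last step — condition (iii), translating ``$\mc{S}$ large and not complete'' into an embedding of $T$ compatible with $\mc{S}$ via the Erd\H{o}s--S\'os property — to be the main obstacle}, since it is the only place the tree hypothesis is essential and it requires setting up the correspondence between strong colourings of the expansion and graph homomorphisms of $T$ carefully; the other conditions are bookkeeping with the $\brm{Ext}$ construction.
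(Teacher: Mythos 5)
Your choice of witness matches the paper's ($v$ a leaf, $F$ an extension edge through $v$), and your treatment of condition (ii) is fine, but two of the three conditions are not actually established. First, your argument that $\mc{H}=\brm{Ext}(\mc{T})$ is not strongly $(t+r-3)$-colorable rests on the claim that $V(T)$ together with the $r-2$ \emph{free vertices of $F$} is pairwise covered. It is not: a free vertex of $F$ lies in no edge other than $F$, so it is covered only with $\ell$, $x$ and the other free vertices, not with any $a\in V(T)\setminus\{\ell,x\}$ (the extension adds edges only through uncovered pairs of $\mc{T}$, never through pairs involving the new vertices). The set you need is $V(\mc{T})$ itself, i.e.\ $V(T)$ together with the $r-2$ \emph{expansion} vertices of $\mc{T}$ (those lying in every expanded tree edge): that set has $t+r-2$ vertices and is pairwise covered in $\brm{Ext}(\mc{T})$, which is exactly how the paper argues. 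You appear to have conflated the expansion vertices of $\mc{T}$ with the free vertices of the single edge $F$; the same confusion makes your sketch of the strong coloring of $\mc{H}\setminus F$ (``$t-2$ core vertices'') muddled, whereas the correct count is that $V(\mc{T})\setminus\{\ell\}$ has $t+r-3$ vertices receiving distinct colors and $\ell$ reuses the color of $x$.

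Second, condition (iii), which you flag as the main obstacle and propose to attack with ``Erd\H{o}s--S\'os / Sidorenko embedding machinery,'' needs nothing of the sort, and your sketch there is not a proof. Note that the lemma is stated for \emph{every} tree on $t\geq 3$ vertices; the Erd\H{o}s--S\'os hypothesis enters this paper only in the weighted-stability step (Theorem 4.2, via Sidorenko's Lagrangian bound), not in this criticality lemma. The paper's argument for (iii) is elementary: strongly color the subgraph $\mc{T}'$ obtained by deleting the new vertices of the extension edges through $v$, arranging that the link of $v$ in the unique expanded tree edge at $v$ gets a color set in $\mc{S}$ (possible since $\mc{S}\neq\emptyset$); then for each remaining $I\in L_{\mc{H}}(v)$, which consists of one already-colored vertex $w\in V(T)$ plus $r-2$ free vertices, observe that since $|\mc{S}|\geq\binom{t+r-4}{r-1}$ and $\mc{S}$ is not isomorphic to $\mc{K}_{t+r-4}^{(r-1)}$, every color $\varphi(w)$ lies in some $S\in\mc{S}$ (otherwise $\mc{S}$ would be contained in, hence equal to, the complete $(r-1)$-graph on the other $t+r-4$ colors), and color the free vertices of $I$ with $S\setminus\{\varphi(w)\}$. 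Without this pigeonhole observation and the use of the free vertices to realize the sets of $\mc{S}$, your proposal leaves condition (iii) unproved.
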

\begin{proof} 
Let $v$ be a leaf of $T$, and let $F$ be an edge of $\brm{Ext}(\mc{T}) - \mc{T}$ containing $v$. We show that $(\brm{Ext}(\mc{T}),F,v)$ is a $(t+r-3)$-spike. Let $u$ be the unique vertex in $(F \cap V(T)) -\{v\}$. Note that $\pl{v}(\mc{T})=t+r-2$, and every pair of vertices in $V(\mc{T})$ is covered in \(\brm{Ext}(\mc{T})\).  Condition (i) in the definition holds, as \(\brm{Ext}(\mc{T})\) is not $(t+r-3)$-colorable, but in \(\brm{Ext}(\mc{T}) - F\), one can use the same color on $u$ and $v$.

As \(v\) is adjacent to the unique vertex in \(T\), \(L_{\brm{Ext}(\mc{T})}(v)\) is a matching. It remains to verify (iii).
For \(\mc{S}\subseteq [t+r-3]^{(r-1)}\) such that \(|\mc{S}|\geq { t +r-4 \choose r-1}\) and $\mc{S}$ is not isomorphic to  $\mc{K}_{t+r-4}^{(r-1)}$ we define mapping \(\varphi : V(\brm{Ext}(\mc{T}))\rightarrow [t+ r-3]\) satisfying condition (iii) in the definition of a $t$-spike  as follows. 

Consider  the subgraph \(\mc{T}'\) of \(\brm{Ext}(\mc{T})\) induced by the vertex set  $V(\brm{Ext}(\mc{T}))$$-(V(L_{\brm{Ext}(\mc{T})}(v)) - V(\mc{T}))$. Then $\mc{T}'$ is \((t + r -3)\)-strongly colorable. Let $\varphi$ be defined on $V(\mc{T}')$ so that $\varphi$ is a strong \((t + r -3)\)-coloring of $\mc{T}'$, and moreover, $\varphi(E - \{v\}) \in \mc{S}$ for the unique edge $E \in \mc{T}'$ such that $v \in E$. It follows that (iii-a) holds for $\varphi$.

It remains to extend $\varphi$ so that it satisfies (iii-b).
For every $w  \in V(T) -\{u,v\}$ there exists a unique $I \in \mc{L}_{\brm{Ext}(\mc{T})}(v)$ such that  $w \in I$.
Since $\mc{S}$ is not isomorphic to  $\mc{K}_{t+r-4}^{(r-1)}$, there exists   \(S \in \mc{S}\) such that $\varphi(w) \in S$. We extend \(\varphi\) to $I-\{w\}$ so that $\varphi(I)=S$. Clearly, the resulting map $\varphi: V(\brm{Ext}(\mc{T})) \to [t+ r-3]$ satisfies (iii-b).
\end{proof}

The following theorem will complete the proof of Theorem~\ref{thm:trees}. 

\begin{theorem}\label{weakweightstability} For every \(r\geq2\) there exists real \(M_r \) such that, if \(T\) is an Erd\H{o}s-S\'{o}s-tree on  \(t\geq M_r\) vertices,  then  \(\brm{Forb}(\mc{T})\) is \(\mf{B}(\mc{K}_{t+r-3}^r)\)-weakly weight stable.
\end{theorem}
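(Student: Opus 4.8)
The plan is to show that if $(\mc{F},\mu)$ is a weighted $\mc{T}$-free $r$-graph with $\lambda(\mc{F},\mu)$ close to $\lambda(\mf{B}(\mc{K}_{t+r-3}^{(r)})) = \pl{e}(t+r-3,r)$, then $(\mc{F},\mu)$ is close (in the blowup distance $d$) to a blowup of $\mc{K}_{t+r-3}^{(r)}$. The natural approach is to first reduce to a "clean" situation and then run a weighted analogue of the classical symmetrization/stability argument for Tur\'an-type problems, exploiting that $\mc{T}$ is an $(r-2)$-expansion of an Erd\H{o}s-S\'os tree $T$. First I would note that it suffices to work with weighted $r$-graphs in which every vertex has positive weight and every vertex ``participates'' in the density, and pass to a blowup so that weights are as balanced as one likes. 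The key structural reduction is that, since $\mc{T}$ is an expansion, a $\mc{T}$-free weighted $r$-graph has constrained link structure: if we look at the ``shadow'' $2$-graph on the heavy vertices and on pairs covered with large weight, $\mc{T}$-freeness should force that shadow to avoid $T$ with a suitable weighting, and by the Erd\H{o}s-S\'os property (that $T$ is an Erd\H{o}s-S\'os-tree) the shadow has average degree at most $t-2$.

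The heart of the argument, as I would organize it, is a weighted Zykov-type symmetrization: given $(\mc{F},\mu)$ achieving density close to the maximum, I would argue that one may assume $\mc{F}$ has no two vertices $x,y$ with $L_{\mc{F}}(x) \neq L_{\mc{F}}(y)$ and the pair $\{x,y\}$ uncovered — if such a pair existed, replacing the link of the lighter-contributing vertex by that of the heavier one does not decrease $\lambda$ and does not create a copy of $\mc{T}$ (here one uses that $\mc{T}$ covers no new pair between two previously non-adjacent vertices because $T$ is a tree, so the symmetrization is $\mc{T}$-free-preserving up to the usual care with the expansion vertices). Iterating, the vertex set partitions into classes of ``twins'', and the quotient $r$-graph $\mc{F}_0$ is such that every pair of distinct classes is covered; so $\mc{F}_0$ with its induced shadow is a $2$-graph that, being $T$-free in the Erd\H{o}s-S\'os sense, has a vertex of small degree — and since $T$ is an Erd\H{o}s-S\'os-tree on $t$ vertices, iterating this degeneracy shows $\mc{F}_0$ is essentially strongly $(t+r-3)$-colorable on its ``core'', i.e. a subgraph of $\mc{K}_{t+r-3}^{(r)}$ after discarding a negligible-weight remainder. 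Comparing Lagrangians, any deviation from the complete $(t+r-3)$-partite structure costs a definite amount of density (because the Lagrangian of $\mc{K}_{t+r-3}^{(r)}$ is uniquely maximized, up to blowups, among strongly $(t+r-3)$-colorable graphs, and having fewer color classes in the ``heavy'' part is strictly worse by the Motzkin–Straus/Lagrangian monotonicity), giving the contrapositive: $\lambda(\mc{F},\mu) \geq \lambda(\mf{H}) - \delta$ forces $d_{\mf{H}}(\mc{F},\mu) \leq \eps$.

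I would expect the main obstacle to be making the symmetrization argument genuinely valid in the weighted setting while controlling the expansion vertices: when we equalize links of two non-adjacent vertices $x$ and $y$, we must ensure no copy of the $r$-graph $\mc{T}$ (not just of $T$) is created, and because $\mc{T}$ has $r-2$ extra ``apex'' vertices on every edge, the combinatorics of which pairs are covered is delicate — one has to argue that a hypothetical new copy of $\mc{T}$ would project to a copy of $T$ in a shadow that already existed, contradicting $T$-freeness of that shadow, and this requires being careful about the weights assigned to the apex vertices and about pairs that are covered with only small weight. A secondary technical point is quantifying ``$T$-free shadow with average degree $> t-2$ implies a copy of $T$'' in the weighted/fractional regime uniformly for all large $t$, which is exactly where the hypothesis that $T$ is an \emph{Erd\H{o}s-S\'os-tree} (and $t \geq M_r$) is used, via a standard averaging/cleaning step to reduce the fractional statement to the integral Erd\H{o}s-S\'os property. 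Once these two points are handled, the final comparison of Lagrangians and the conclusion of weak weight stability are routine.
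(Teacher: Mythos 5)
Your plan has a genuine gap at both of its central steps. First, the Zykov-type symmetrization (replacing $L_{\mc{F}}(x)$ by $L_{\mc{F}}(y)$ across an uncovered pair) is freeness-preserving only when the forbidden graph covers pairs; $\mc{T}$ does not, since every non-adjacent pair of $V(T)$ is uncovered in $\mc{T}$, so a newly created copy of $\mc{T}$ may use both $x$ and $y$ at such a pair and cannot be pulled back by swapping $x$ for $y$. Your proposed repair --- that such a copy ``would project to a copy of $T$ in a shadow that already existed, contradicting $T$-freeness of that shadow'' --- rests on a false premise: $\mc{T}$-freeness does not make any shadow $T$-free. The conjectured extremal graph $\mc{K}^{(r)}_{t+r-3}(n)$ is itself $\mc{T}$-free while its shadow is complete and contains $T$ in abundance. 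What $\mc{T}$-freeness actually gives is that for every $(r-2)$-set $W$ of vertices the pair-link $\{\{a,b\} : \{a,b\}\cup W \in \mc{F}\}$ is $T$-free; the union of these links over all $W$ is unconstrained. For the same reason your second step collapses: after symmetrization the quotient covers pairs, so its shadow is complete, and no ``vertex of small degree'' or iterated degeneracy can be extracted from the Erd\H{o}s--S\'os property. Covering-pairs $\mc{T}$-free graphs need not be subgraphs of blowups of $\mc{K}^{(r)}_{t+r-3}$ at all; they merely have smaller Lagrangian, and establishing that quantitative fact is precisely the difficulty your sketch assumes away.

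The paper avoids symmetrizing the graph altogether. The Erd\H{o}s--S\'os hypothesis enters only through Sidorenko's Lagrangian bound (Theorem~\ref{sidorenko1}), $\lambda(\mc{F},\mu)\le f_r(\max\{t,\,1/\gamma-r+3\})$ for $\mc{T}$-free $\mc{F}$ with maximum weight $\gamma$; monotonicity of $f_r$ forces $\gamma$ to be essentially $1/(t+r-3)$, Lemma~\ref{sidolem} shows the heaviest vertex $u$ has near-extremal link density, and since $L_{\mc{F}}(u)$ is free of the $(r-3)$-expansion of $T$, induction on $r$ concentrates all but $\eps$ of the weight on a set inducing $\mc{K}^{(r)}_{t+r-3}$ (Lemma~\ref{auxlem1}), which immediately gives weak weight stability. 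If you want to salvage your outline, the legitimate ``symmetrization'' here is Lagrangian weight-shifting across uncovered pairs (which leaves the graph unchanged and hence trivially preserves $\mc{T}$-freeness), but even after that reduction you still need Sidorenko-type control of covering-pairs $\mc{T}$-free graphs, i.e.\ essentially the content of Theorem~\ref{sidorenko1}, which your argument does not supply.
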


The proof of Theorem~\ref{weakweightstability} relies on a result by Sidorenko~\cite{sidorenko}. Its statement involves a function
$$f_r(x) = \frac{1}{(x+r-3)^r}{x+ r-3 \choose r}\frac{t-2}{x-2}.$$
Let us first note the following useful observations concerning $f_r(x)$ from~\cite{sidorenko}:
\begin{description}
	\item[(F1)] The function $f_r(x)$ is strictly decreasing for sufficiently large $x$,
	\item[(F2)]  $$f_r(x)=\frac{1}{r}\left(\frac{x+r-4}{x+r-3}\right)^{r-1}f_{r-1}(x), \qquad \mathrm{and}$$ 
	\item[(F3)] $\lambda(\mc{K}_{t+r-3}^{(r)})=f_r(t)$.
\end{description}
\begin{theorem}[{\cite[Lemma 3.3]{sidorenko}}]\label{sidorenko1} For $r \geq 2$ let $M_r$ be such that $f_r(x)$ is decreasing for $x \geq M_r$.  If \(T\) is an Erd\H{o}s-S\'{o}s-tree on  \(t\geq M_r\) vertices  then $\lambda(\mc{F}, \mu) \leq f_r(x)$ for every \(\mc{F} \in \brm{Forb}(\mc{T}), \mu\in M(\mc{F})\), 
where \(x= \max{\{t, \frac{1}{\gamma}  - r + 3\}}\) and \(\gamma=\max_{v\in V(\mc{F})}{\mu(v)}\). In particular, $\lambda(\brm{Forb}(\mc{T})) = f_r(t).$
\end{theorem}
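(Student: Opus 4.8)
\textbf{Plan for the proof of Theorem~\ref{weakweightstability}.}
The goal is weak weight stability: for every $\eps>0$ we must produce $\delta>0$ so that whenever $\mc{F}\in\brm{Forb}(\mc{T})$, $\mu\in\mc{M}(\mc{F})$, and $\lambda(\mc{F},\mu)\geq\lambda(\mf{B}(\mc{K}_{t+r-3}^{(r)}))-\delta=f_r(t)-\delta$, we have $d_{\mf{B}(\mc{K}_{t+r-3}^{(r)})}(\mc{F},\mu)\leq\eps$. The plan is to run a two–step argument: first use Theorem~\ref{sidorenko1} to force the weight $\mu$ to be spread out among at least $t+r-3$ vertices of positive weight; then use a standard symmetrization/compactness argument to show that a near-optimal weighted $\mc{T}$-free graph must in fact be a blowup of $\mc{K}_{t+r-3}^{(r)}$, up to small perturbations.

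First I would establish the \emph{support-spreading} step. Theorem~\ref{sidorenko1} says $\lambda(\mc{F},\mu)\leq f_r(x)$ with $x=\max\{t,\tfrac{1}{\gamma}-r+3\}$ and $\gamma=\max_v\mu(v)$. Since $f_r$ is strictly decreasing for $x\geq M_r$ by (F1), the hypothesis $\lambda(\mc{F},\mu)\geq f_r(t)-\delta$ forces $x$ to be close to $t$: more precisely $f_r(x)\geq f_r(t)-\delta$ together with strict monotonicity and a modulus-of-continuity argument gives $x\leq t+\eta(\delta)$ where $\eta(\delta)\to 0$ as $\delta\to 0$. Hence $\tfrac1\gamma-r+3\leq t+\eta$, i.e. $\gamma\leq\tfrac{1}{t+r-3-\eta}$, so no vertex carries much more than its ``fair share'' $\tfrac{1}{t+r-3}$ of the weight. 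One then argues that the whole weight distribution is nearly uniform on a $(t+r-3)$-element support: if the weight were concentrated on fewer than $t+r-3$ vertices of substantial weight, or if many vertices had tiny weight, the degree constraint together with the bound on $\gamma$ would be violated — this is a routine convexity/counting estimate, quantified so that the deficiency is $O(\delta^{c})$ for some $c>0$.

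Next comes the \emph{structure} step. Having localized $\mu$ to (essentially) $t+r-3$ vertices of weight $\approx\tfrac{1}{t+r-3}$ each, I would argue that the restriction of $\mc{F}$ to this heavy support must, up to $O(\eps)$ weight, be the complete $r$-graph $\mc{K}_{t+r-3}^{(r)}$: if a positive fraction of the $r$-subsets of the heavy support were non-edges, we could redistribute weight off those subsets to strictly increase $\lambda$ beyond $f_r(t)$, contradicting Theorem~\ref{sidorenko1} (which also asserts $\lambda(\brm{Forb}(\mc{T}))=f_r(t)$, so $f_r(t)$ is an exact ceiling). The light vertices contribute negligibly to $\lambda$ and can be merged into heavy classes (cloning in the blowup language) at cost $O(\eps)$ in the distance $d$, which only compares weighted graphs up to blowups. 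Assembling these estimates gives $d_{\mf{B}(\mc{K}_{t+r-3}^{(r)})}(\mc{F},\mu)\leq\eps$ for $\delta$ small enough, which is exactly weak weight stability.

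\textbf{Main obstacle.} The delicate point is making the structure step quantitative and \emph{robust to non-uniqueness of the support}: a priori there could be several vertices with weight near the threshold, or the near-extremal $\mu$ could "interpolate" between supports of size $t+r-3$ and $t+r-2$. One must show that the tiny slack $\delta$ in $\lambda$ translates into an only-slightly-larger slack $\eps$ in structural distance, i.e. prove a genuine stability (not just exact-extremal) statement for the Lagrangian inequality of Theorem~\ref{sidorenko1}. I expect to handle this by a compactness argument: if the conclusion failed there would be a sequence $(\mc{F}_k,\mu_k)$ with $\lambda\to f_r(t)$ but distance bounded below; passing to a limit object (using that the relevant graphs have bounded ``heavy'' support once $\gamma$ is bounded away from $0$, together with the hypergraph limit / weighted-graph compactness implicit in the $\mf{B}(\cdot)$ framework) yields an exact extremizer at distance bounded below from $\mf{B}(\mc{K}_{t+r-3}^{(r)})$, contradicting the exact characterization. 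The bookkeeping needed to make the limit $\mc{T}$-free and to control the vertices of vanishing weight is where the real work lies.
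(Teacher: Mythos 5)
You have proved the wrong statement. The statement in question is Theorem~\ref{sidorenko1}, Sidorenko's Lagrangian bound (\cite[Lemma 3.3]{sidorenko}), which the paper cites and does not prove. Your write-up is, by your own heading, a plan for Theorem~\ref{weakweightstability}, and it \emph{uses} Theorem~\ref{sidorenko1} as a black box in its very first step. So nothing in your proposal actually engages with the content of the target statement — the inequality $\lambda(\mc{F},\mu)\leq f_r(x)$ with $x=\max\{t,\tfrac{1}{\gamma}-r+3\}$ for $\mc{T}$-free $\mc{F}$. To prove that you would have to work with Sidorenko's argument itself (which exploits the Erd\H{o}s--S\'{o}s property of $T$ to bound the weighted link of a maximum-weight vertex and then iterates); consuming the conclusion is not a proof of it.

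As a secondary comment, even read as a sketch of Theorem~\ref{weakweightstability}, your plan diverges from (and is weaker than) the paper's proof. The paper reduces that theorem to Lemma~\ref{auxlem1}, which it proves by an explicit induction on the uniformity $r$: Lemma~\ref{sidolem} first extracts a vertex $u$ of maximum weight with $\lambda(\mc{F},\mu,u)\geq rf_r(t)-\eps'$; then the identity (F2) identifies the renormalised weighted link $L_{\mc{F}}(u)$ as a near-extremal weighted $\mc{T}'$-free $(r-1)$-graph, so the inductive hypothesis gives a heavy set $S'$ inducing $\mc{K}_{t+r-4}^{(r-1)}$, and $S=S'\cup\{u\}$ works. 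This produces explicit $\delta(\eps)$ with no limit theory. Your plan, by contrast, jumps from the bound $\gamma\lesssim\tfrac{1}{t+r-3}$ to ``nearly uniform on a $(t+r-3)$-element support'' by an unexplained convexity/counting step (the $\gamma$ bound alone does not control how many vertices carry substantial weight, and certainly does not make the heavy support a clique), and you then outsource the genuine stability step to a compactness/limit argument for weighted hypergraphs that is not developed in the paper and whose bookkeeping you acknowledge is ``where the real work lies.'' That acknowledged gap is precisely what Lemma~\ref{auxlem1}'s induction fills.
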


Given an \(r\)-graph \(\mc{F}\), $u,v \in V(\mc{F})$ and  \(\mu\in \mc{M}(\mc{F})\), let $\lambda(\mc{F},\mu, u) =\sum_{I\in L_{\mc{F}}(u)}{\mu(I)},$ and let $\lambda(\mc{F},\mu, u,v) =\sum_{I\in L_{\mc{F}}({u,v})}{\mu(I)}.$ The following technical lemma is useful in the proof of Theorem~\ref{weakweightstability}.

\begin{lem}\label{sidolem}  For every \(\eps>0\) there exists \(\delta>0\) such that if \(\mc{F}\) is an \(r\)-graph, $u \in V(\mc{F})$, \(\mu\in \mc{M}(\mc{F})\), \(\lambda(\mc{F}, \mu) \geq \lambda(\mc{F})  - \delta\) and \(\mu(u)\geq \eps\), then  \(\lambda(\mc{F}, \mu, u) \geq r \lambda(\mc{F}) - \eps.\)
\end{lem}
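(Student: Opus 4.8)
The plan is to prove Lemma~\ref{sidolem} by a local perturbation argument on the simplex $\mc{M}(\mc{F})$. First I would record two elementary identities that will be used repeatedly: writing $\mu(I):=\prod_{v\in I}\mu(v)$, one has $\frac{\partial}{\partial\mu(v)}\lambda(\mc{F},\mu)=\lambda(\mc{F},\mu,v)$, and, counting each edge once per incident vertex, $\sum_{v\in V(\mc{F})}\mu(v)\,\lambda(\mc{F},\mu,v)=r\,\lambda(\mc{F},\mu)$. I would also dispose of the trivial case $\eps\ge 1$ at the outset: then $\mu(u)\ge\eps\ge 1$ forces $V(\mc{F})=\{u\}$ and $\mc{F}=\emptyset$, so $\lambda(\mc{F})=0$ and the conclusion is vacuous; hence assume $\eps<1$.

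Now suppose for contradiction that $\mu\in\mc{M}(\mc{F})$ satisfies $\lambda(\mc{F},\mu)\ge\lambda(\mc{F})-\delta$, $\mu(u)\ge\eps$, but $\lambda(\mc{F},\mu,u)<r\lambda(\mc{F})-\eps$. Plugging the lower bound $\lambda(\mc{F},\mu)\ge\lambda(\mc{F})-\delta$ into the summation identity and isolating the $u$-term, the weighted average of $\lambda(\mc{F},\mu,w)$ over $w\neq u$ (weights $\mu(w)/(1-\mu(u))$) is at least $r\lambda(\mc{F})+\frac{\mu(u)\eps-r\delta}{1-\mu(u)}\ge r\lambda(\mc{F})+\eps^2-r\delta$, so some vertex $w\neq u$ has $\lambda(\mc{F},\mu,w)\ge r\lambda(\mc{F})+\eps^2-r\delta$; thus $\lambda(\mc{F},\mu,w)-\lambda(\mc{F},\mu,u)\ge\eps^2+\eps-r\delta=:\eta$, which is positive as soon as $\delta<\eps/r$. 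Then for $s\in[0,\mu(u)]$ I would let $\mu_s$ agree with $\mu$ except $\mu_s(u)=\mu(u)-s$ and $\mu_s(w)=\mu(w)+s$. The crucial observation is that $P(s):=\lambda(\mc{F},\mu_s)$ has degree at most $2$ in $s$: edges meeting neither of $u,w$ give constants, edges meeting exactly one give linear terms, and edges containing both contribute $(\mu(u)-s)(\mu(w)+s)\mu(F\setminus\{u,w\})$, whose $s^2$-coefficient is $-\mu(F\setminus\{u,w\})$. Summing yields $P(s)=\lambda(\mc{F},\mu)+(\lambda(\mc{F},\mu,w)-\lambda(\mc{F},\mu,u))\,s-\lambda(\mc{F},\mu,u,w)\,s^2\ge \lambda(\mc{F})-\delta+\eta s-s^2$, using $\lambda(\mc{F},\mu,u,w)\le \tfrac{1}{(r-2)!}\le 1$. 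Choosing $s=\eta/2$ (which lies in $[0,\mu(u)]$ since $\eta/2<(\eps^2+\eps)/2<\eps\le\mu(u)$ because $\eps<1$) gives $P(\eta/2)\ge\lambda(\mc{F})-\delta+\eta^2/4\ge\lambda(\mc{F})-\delta+\eps^4/4$, which exceeds $\lambda(\mc{F})$ once $\delta<\eps^4/4$. Since $\mu_{\eta/2}\in\mc{M}(\mc{F})$, this contradicts the definition of $\lambda(\mc{F})$; so $\delta:=\min\{\eps/r,\,\eps^4/4\}$ works.

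I do not expect a genuine obstacle here: the only two points needing care are keeping the perturbed weight vector inside the simplex (handled by the bound $s=\eta/2\le\mu(u)$) and checking that the negative quadratic term of $P$ cannot swamp the first-order gain $\eta s$ — this is immediate once one notes $P$ has degree at most $2$ and bounded quadratic coefficient. Everything else is the routine bookkeeping indicated above, which I would carry out explicitly in the write-up.
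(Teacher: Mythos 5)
Your proof is correct and is essentially the same argument as the paper's: assuming \(\lambda(\mc{F},\mu,u)<r\lambda(\mc{F})-\eps\), use the identity \(\sum_v \mu(v)\lambda(\mc{F},\mu,v)=r\lambda(\mc{F},\mu)\) to find a vertex \(w\neq u\) of link density at least \(r\lambda(\mc{F})\), then shift weight from \(u\) to \(w\), noting the exact quadratic expansion of the density with the cross term bounded by \(\lambda(\mc{F},\mu,u,w)\le 1\), contradicting the maximality of \(\lambda(\mc{F})\) (the paper shifts the fixed amount \(\eps^2\) and takes \(\delta=(\eps^3-\eps^4)/r\), while you optimize the shift to \(\eta/2\); a purely cosmetic difference). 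The only nits are bookkeeping: your \(\eps\ge1\) dismissal and the division by \(1-\mu(u)\) silently assume the degenerate case \(\mu(u)=1\) is excluded (it is trivial, since then \(\lambda(\mc{F})\le\delta\)), and since your final display is stated with \(\ge\), either take \(\delta\) strictly below \(\eps^4/4\) (say \(\eps^4/8\)) or invoke the strictness inherited from the contradiction hypothesis.
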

\begin{proof} We assume without loss of generality that $\eps<1$, and let $\delta = (\eps^3-\eps^4)/r $. Suppose for a contradiction that $\lambda(\mc{F}, \mu, u) < r \lambda(\mc{F}) - \eps.$ We have 
$r \lambda (\mc{F}, \mu) = \sum_{v\in V(\mc{F})}{\mu(v) \lambda(\mc{F}, \mu, v)}.$
Thus, there exists \(u' \in V(\mc{F}) - \{u\}\) such that \(\lambda(\mc{F}, \mu, u') \geq r\lambda(\mc{F})\). 

Let \(\mu' \in \mc{M}(\mc{F})\) be defined as follows.  Let   \(\mu'(u' ) = \mu(u') + \eps^2 \), $\mu'(u)$ $=$ $\mu(u) - \eps^2$,   and let \(\mu'(v) = \mu(v)\) for every \(v \in V(\mc{F}) - \{ u, u'\}\). We have
\begin{align*} r \lambda (\mc{F}, \mu') &= \sum_{v\in V(\mc{F})}{\mu'(v) \lambda(\mc{F}, \mu', v)} 
= r \lambda (\mc{F}, \mu)  + \eps^2( \lambda(\mc{F}, \mu, u') - \lambda(\mc{F}, \mu, u) ) -\eps^4 \lambda(\mc{F}, \mu, u, u') \\
&> r \lambda (\mc{F}, \mu) +  \eps^3 - \eps^4 
\geq r \lambda(\mc{F} ) - r\delta  + \eps^3 - \eps^4
\geq r \lambda(\mc{F}), 
\end{align*}
a contradiction.
\end{proof}

In the rest of the section it will be convenient for us to occasionally consider 
subprobabilistic measures on the vertex set of a graph, rather than probabilistic ones. For a graph $\mc{F}$ let $\mc{M_{\leq 1}(\mc{F})} \supset \mc{M}(\mc{F})$ denote the set of functions $\mu: V(\mc{F}) \to \brm{R}_{+}$ such that $\mu(V(\mc{F})) \leq 1$. The density $\lambda(\mc{F}, \mu)$ and the distance $d((\mc{F},\mu),(\mc{F},\mu'))$ for  $\mu,\mu' \in \mc{M_{\leq 1}(\mc{F})}$ are defined as in Section~\ref{sec:weighted}. Moreover, it is easy to check that 
 \(|\lambda(\mc{F}, \mu) - \lambda(\mc{F}, \mu')| \leq  ||\mu-\mu'||_1\), and \(d((\mc{F}, \mu), (\mc{F}, \mu')) \leq ||\mu-\mu'||_1\) for any \(\mu, \mu'\in M_{\leq 1}(\mc{G})\).
All the technical work in the proof of Theorem~\ref{weakweightstability} is accomplished in the following lemma.

\begin{lem}\label{auxlem1} For every \(r\geq2\) there exists \(M_r\) such that if \(\mc{T}\) is the $(r-2)$-expansion of an  Erd\H{o}s-S\'{o}s-tree $T$ on  \(t\geq M_r\) vertices  then  the following holds. For every \(\eps>0\) there exists \(\delta>0\) such that if \(\mc{F}\in \brm{Forb}(\mc{T})\), \(\mu \in \mc{M}(\mc{F})\) with \(\lambda(\mc{F}, \mu) \geq \lambda(\brm{Forb}(\mc{T}))  - \delta\), then there exists  \(S\subseteq V(\mc{F})\) such that
$\mu(S) \geq 1- \eps$, and 
\(\mc{F}[S]\) is isomorphic to \(\mc{K}_{t+r-3}^{(r)}\).
\end{lem}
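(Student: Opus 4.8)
\textbf{Proof proposal for Lemma~\ref{auxlem1}.}

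The plan is to extract the extremal structure from a near-optimal weighting by peeling off vertices according to their weight and tracking how the Lagrangian constraint forces the link structure of $\mc{F}$ to mimic $\mc{K}_{t+r-3}^{(r)}$. First I would record the baseline: by (F3) and Theorem~\ref{sidorenko1}, $\lambda(\brm{Forb}(\mc{T})) = f_r(t)$, so our hypothesis reads $\lambda(\mc{F},\mu) \ge f_r(t) - \delta$. The strategy is an induction on $r$ using (F2), which relates $f_r$ to $f_{r-1}$; the $r=2$ case is the classical statement that a near-extremal weighting of a $\mc{T}$-free graph (equivalently, a graph not containing the tree $T$, since the $0$-expansion is $T$ itself) is essentially supported on a clique of size $t-1$, which follows from the Erd\H{o}s--S\'os property together with a standard symmetrization/compression argument.

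For the inductive step I would proceed as follows. First, fix a vertex $u$ with $\mu(u) = \gamma = \max_v \mu(v)$ maximal; a standard argument shows that if $\gamma$ is tiny then $x = 1/\gamma - r + 3$ is huge and Theorem~\ref{sidorenko1} gives $\lambda(\mc{F},\mu) \le f_r(x)$, which by (F1) is bounded away from $f_r(t)$ — contradicting near-optimality once $\delta$ is small. Hence $\gamma \ge c_r$ for some constant $c_r > 0$ depending only on $r$ and $t$. Now apply Lemma~\ref{sidolem} with $\eps$ replaced by a suitable small constant to conclude $\lambda(\mc{F},\mu,u) \ge r\lambda(\mc{F}) - o(1)$, i.e.\ the link $L_{\mc{F}}(u)$ carries almost the full available weighted density. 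Consider the link weighted graph $(L_{\mc{F}}(u), \mu|_{V \setminus u})$ rescaled to a probability measure $\mu^*$ on its support. Since $\mc{F}$ is $\mc{T}$-free and $T$ has a leaf, $L_{\mc{F}}(u)$ is $\mc{T}'$-free where $\mc{T}'$ is the $(r-3)$-expansion of $T$ minus a leaf — which is (an expansion of) a slightly smaller Erd\H{o}s--S\'os tree — so the inductive hypothesis (together with (F2), which gives exactly the arithmetic matching $f_{r-1}$ evaluated appropriately) applies to $L_{\mc{F}}(u)$, yielding a set $S_0 \subseteq V(\mc{F}) \setminus \{u\}$ with $\mu(S_0)$ close to $\mu(V \setminus u)$ and $L_{\mc{F}}(u)[S_0]$ isomorphic to $\mc{K}_{t+r-4}^{(r-1)}$. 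Setting $S = S_0 \cup \{u\}$, every $r$-subset of $S$ through $u$ is an edge; the remaining work is to show that the $r$-subsets of $S$ avoiding $u$ are edges too, and that $\mu(S) \ge 1 - \eps$.

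To finish I would argue that if a constant fraction of weighted $r$-tuples inside $S$ not containing $u$ were non-edges, or if $\mu(V \setminus S)$ were not small, one could shift weight onto $S$ (or onto $u$) to strictly increase $\lambda(\mc{F},\mu)$ beyond $f_r(t)$, contradicting Theorem~\ref{sidorenko1}; this is where working with subprobabilistic measures in $\mc{M}_{\le 1}(\mc{F})$ and the Lipschitz bounds $|\lambda(\mc{F},\mu)-\lambda(\mc{F},\mu')| \le \|\mu-\mu'\|_1$ are convenient, since they let me delete small-weight vertices and renormalize while controlling the change in density. A delicate point is that $S$ must induce \emph{exactly} $\mc{K}_{t+r-3}^{(r)}$ and not something larger: here one uses that $\mc{F}$ is $\mc{T}$-free and $\pl{v}(\mc{T}) = t+r-2$, so $\mc{F}$ contains no clean copy of $\mc{K}_{t+r-2}^{(r)}$ on a positive-weight vertex set, forcing $|S| = t+r-3$ after cleaning; any extra high-weight vertex would combine with the clique to produce a forbidden expansion.

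The main obstacle I anticipate is the bookkeeping in the inductive step — precisely matching the "smaller tree" whose expansion appears as the relevant forbidden subgraph of the link $L_{\mc{F}}(u)$, and verifying that the constants $M_r$, $c_r$, $\delta$ propagate correctly through (F2) so that the inductive hypothesis applies with the right parameter $t$. A secondary obstacle is ruling out the degenerate possibility that the near-extremal link structure is $\mc{K}_{t+r-4}^{(r-1)}$ plus a spurious low-weight but structurally significant piece; this is exactly what condition (iii) in the $t$-spike definition (and the "$\mc{S}$ not isomorphic to $\mc{K}_{t-1}^{(r-1)}$" hypothesis) is designed to handle in the vertex-local-stability argument, and an analogous weighted version of that dichotomy is needed here.
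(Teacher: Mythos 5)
The overall skeleton you chose (induction on $r$; isolate a maximum-weight vertex $u$; use Theorem~\ref{sidorenko1} and (F1) to force $\mu(u)\approx 1/(t+r-3)$; use Lemma~\ref{sidolem} to transfer near-extremality to the link $L_{\mc{F}}(u)$; apply the induction hypothesis to the rescaled link and finish with a Lipschitz/restriction argument) is exactly the paper's. But there is a genuine gap at the heart of the inductive step: you claim that $L_{\mc{F}}(u)$ is free of the $(r-3)$-expansion of $T$ \emph{minus a leaf}. That claim is not valid. If the link contains the $(r-3)$-expansion of $T-v$, then adding $u$ to each of its edges only produces the $(r-2)$-expansion of $T-v$ inside $\mc{F}$, which is not forbidden; nothing supplies the missing enlarged leaf edge, and $u$ cannot play the role of the leaf since $u$ lies in every constructed edge while a leaf lies in only one. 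Moreover, even if the claim held, the arithmetic would not match: the induction hypothesis for a tree on $t-1$ vertices yields a clique $\mc{K}^{(r-1)}_{t+r-5}$ and the threshold $f_{r-1}(t-1)$, not the $\mc{K}^{(r-1)}_{t+r-4}$ and $f_{r-1}(t)$ you assert, and (F2) relates $f_r(x)$ to $f_{r-1}(x)$ at the \emph{same} argument $x$, so the "arithmetic matching" you invoke fails; you would also need $T$ minus a leaf to be an Erd\H{o}s--S\'os tree, which the hypotheses do not give you.

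The correct (and paper's) observation is simpler: $L_{\mc{F}}(u)$ is free of the $(r-3)$-expansion $\mc{T}'$ of the \emph{same} tree $T$, because any copy of $\mc{T}'$ in the link, with $u$ appended to every edge, is precisely a copy of the $(r-2)$-expansion of $T$ in $\mc{F}$ (with $u$ as one of the expansion vertices). With this the induction runs at fixed $t$ and decreasing uniformity, the rescaled link has density at least $f_{r-1}(t)-\delta_{r-1}$ by (F2), and one obtains $S'$ of size exactly $t+r-4$ inducing $\mc{K}^{(r-1)}_{t+r-4}$; setting $S=S'\cup\{u\}$, completeness of $\mc{F}[S]$ follows directly because $\mc{F}[S]$ has exactly $t+r-3$ vertices and $\lambda(\mc{F}[S],\mu|_S)\geq f_r(t)-\delta-\eps$, while every non-complete $r$-graph on $t+r-3$ vertices has Lagrangian bounded away from $f_r(t)$ — no weight-shifting argument, and no analogue of the $t$-spike condition (iii), is needed here. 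Your base case sketch is also vaguer than necessary: for $r=2$ one just uses Motzkin--Straus-type clique-number considerations plus the fact that in a $T$-free graph every $(t-1)$-clique is a connected component, together with the superadditivity bound for Lagrangians of disjoint unions.
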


\begin{proof} 
The proof is by induction on \(r\). 

The base case is $r=2$. We have $\lambda(\brm{Forb}(T))=\frac{t-2}{t-1}$, and $\lambda(\mc{F})=\frac{\omega-2}{\omega-1}$, where $\omega$ is the number of vertices in the maximum complete subgraph of $\mc{F}$. It follows that 
$\omega = t-1$. However, the graph $\mc{F}$ is $T$-free, and therefore every complete subgraph of  $\mc{F}$ of size $t-1$ is a component of $\mc{F}$. Note that if $\mc{F} = \mc{F}_1 \cup \mc{F}_2$ and $V(\mc{F}_1) \cap V(\mc{F}_2) = \emptyset$ then $$\lambda(\mc{F},\mu) \leq \mu(V(\mc{F}_1))^2\lambda(\mc{F}_1) +  \mu(V(\mc{F}_2))^2\lambda(\mc{F}_2).$$
It follows that there exists a complete component $\mc{C}$ of $\mc{F}$ such that $\pl{v}(\mc{C})=t-1$ and $2\mu(V(\mc{C}))(1-\mu(V(\mc{C}))) \leq \delta.$ Therefore taking $S=V(\mc{C})$ we see that $\delta = \eps -\eps^2$ satisfies the theorem in the base case. 

We move on to the induction step. Let $M_r$ be chosen so that  $f_k(x)$ is strictly decreasing for $x > M_r$ and $k \leq r$. This choice is possible by (F1). By the induction hypothesis there exists $\delta_{r-1}>0$ such  that the claim holds for the $(r-3)$-expansion of $T$.
The parameters  $\delta \ll \delta'' \ll \eps' \ll \delta_{r-1} $ will be chosen to satisfy the inequalities (occasionally implicit) appearing further in the proof.

Let \(u\in V(\mc{F})\) be such that \(\mu(u) = \max_{v\in V(\mc{F})}{\mu(v)}\), and let \(\gamma=\mu(u)\). Suppose that $\gamma < 1/(t+r-3)$. Then by Theorem~\ref{sidorenko1} and our assumptions we have 
$
f_r(t) - \delta \leq \lambda(\mc{F}, \mu) \leq f_r(1/\gamma - r+3).
$
This inequality and the choice of $M_r$ imply that 
\begin{equation}\label{gamma}
\gamma  \geq \frac{1 -\eps'}{t+r-3},
\end{equation}
 as long as $\delta$ is 
sufficiently small compared to $\eps'$.
By Lemma~\ref{sidolem} we  have
\begin{equation}\label{lagrangineq}\lambda(\mc{F}, \mu, u) \geq r\lambda(\mc{F}) - \eps'/2 \geq rf_r(t) - \eps',
\end{equation}
once again assuming that $\delta$ is sufficiently small compared to $\eps'$ for the conditions of Lemma~\ref{sidolem} to be satisfied. 

Let $\mc{T}'$ be the \((r-3)\)-enlargement of the tree \(T\), and let $\mc{F'} = L_{\mc{F}}(u)$. Then $\mc{F}'$ is  $\mc{T}'$-free.
Let $\mu' \in \mc{M}(\mc{F'})$ be given by $\mu'(v) =\frac{\mu(v)}{1-\gamma} $, for every \(v\in  V(\mc{F}')\). Using (\ref{gamma}),(\ref{lagrangineq}) and (F2) we have 
\begin{align*}\lambda(\mc{F'}, \mu') &=  \frac{\lambda(\mc{F}, \mu, u)}{(1-\gamma)^{r-1}} 
\geq \left(\frac{t+r-3}{t+r-4 +\eps'}\right)^{r-1}(rf_r(t) - \eps')
\\ &= \left(\frac{t+r-4}{t+r-4 +\eps'}\right)^{r-1}f_{r-1}(t) - \eps'  \left(\frac{t+r-3}{t+r-4 +\eps'}\right)^{r-1}\geq
f_{r-1}(t) - \delta_{r-1},
\end{align*}
where the last inequality  holds for $\eps'$ is sufficiently small compared to $\delta_{r-1}$. By the choice of $\delta_{r-1}$ there exists   \(S'\subseteq V(\mc{F}) - \{u\}\) with \(|S'| = t+r-4\) such that \(\mu'(S') \geq 1 - \eps\). Let \(S=S'\cup \{u\}\) then
$$\mu(S) = \mu(u) + \mu'(S') (1-\mu(u))
\geq 1- \eps.
$$
It remains to show that $\mc{F}[S]$ is complete. We assume without loss of generality that $\eps$ is sufficiently small.
Let $\mc{F}^*=\mc{F}[S]$, and let $\mu^*=\mu|_S$. Then $\mu^*\in \mc{M}_{\leq 1}(\mc{F}^*)$, and $\lambda(\mc{F}^*,\mu^*) \geq \lambda(\mc{F},\mu) - \|\mu - \mu^*\| \geq f_r(t)-\delta-\eps.$ 
It follows that  $\mc{F}^*$ is complete, as long as $\eps$ and $\delta$ are sufficiently small with respect to $t$ and $r$.
\end{proof}

Lemma~\ref{auxlem1} directly implies Theorem~\ref{weakweightstability}, as follows.

\begin{proof}[Proof of Theorem~\ref{weakweightstability}:] 
Let $\mf{B} = \mf{B}(\mc{K}_{t+r-3}^{(r)})$.
For every \(\eps>0\) we need to show existence of \(\delta>0\) such that if \(\mc{F}\in \brm{Forb}(\mc{T})\), \(\mu\in \mc{M}(\mc{F})\) with  \(\lambda(\mc{F}, \mu) \geq \lambda(\brm{Forb}(\mc{T})) - \delta\), then \(d_{\mf{B}}(\mc{F},\mu) \leq \eps\). Let $\delta$ be chosen so that Lemma~\ref{auxlem1} holds. Then there exists $S \subseteq V(\mc{F})$ such that 
$\mu(S) | \geq 1 - \eps$, and $\mc{F^*}:=\mc{F}[S]$ is isomorphic to $\mc{K}_{t+r-3}^{(r)}$.  Let $\mu' \in \mc{M}_{\leq 1}(\mc{F})$ be the measure obtained from $\mu$ by setting $\mu'(v) =0$ for every $v \in V(\mc{F})-S$. 
  Then, 
\begin{align*}
d_{\mf{B}}(\mc{F},\mu) &\leq d((\mc{F}^*, \mu|_S),(\mc{F}, \mu)) = 
d((\mc{F}, \mu'),(\mc{F}, \mu)) \leq \eps
\end{align*}
as desired.
\end{proof}

\section{The proof of Theorem~\ref{thm:mubayigen}}
\label{sec:mubayipikh}

By Corollary~\ref{cor:main} and Theorem~\ref{vertexlocalstab}, Theorem~\ref{thm:mubayigen} follows immediately from the following Lemmas~\ref{lem:mubayilocal} and~\ref{lem:mubayiweight}.

\begin{lem}\label{lem:mubayilocal} If \(\mc{F}\) is an \(r\)-graph that covers pairs, then for any \( t\geq \pl{v}(\mc{F})\), the \(r\)-graph \(\brm{Ext}(\mc{F}^{+(t+1)})\) is sharply \(t\)-critical.
\end{lem}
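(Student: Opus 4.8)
The goal is to exhibit a vertex $v$ lying in an edge $F$ of $\brm{Ext}(\mc{F}^{+(t+1)})$ such that $(\brm{Ext}(\mc{F}^{+(t+1)}),F,v)$ is a $t$-spike, and then invoke Theorem~\ref{vertexlocalstab}. Write $\mc{G} = \mc{F}^{+(t+1)}$, so $\pl{v}(\mc{G}) = t+1$; by construction the $t+1-\pl{v}(\mc{F}) \geq 1$ added vertices of $\mc{G}$ are isolated, and since $\mc{F}$ covers pairs, the only uncovered pairs in $\mc{G}$ are those meeting an isolated vertex. Let $w$ be one of the isolated vertices of $\mc{G}$; then in $\brm{Ext}(\mc{G})$ every edge through $w$ is a ``new'' edge of the form $\{w,x\}\cup\{v^P_1,\dots,v^P_{r-2}\}$ for some $x \in V(\mc{G})-\{w\}$, and each such edge's $r-2$ extension vertices are free (contained in no other edge). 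Pick such an edge $F$ with $w \in F$, and set $v := w$. I expect the two critical vertices of $(\brm{Ext}(\mc{G}),F)$ to be $w$ and the unique vertex $x \in V(\mc{G})$ with $x \in F$.

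\textbf{Checking the conditions.} For (i): $\brm{Ext}(\mc{G})$ is not strongly $t$-colorable because $\mc{G}$ covers pairs after extension — every pair of the $t+1$ vertices of $\mc{G}$ is covered by some edge of $\brm{Ext}(\mc{G})$, so a strong $t$-coloring would force $t+1$ pairwise-distinctly-colored vertices, impossible. (This is the standard observation underlying Theorem~\ref{thm:keevash}; I would phrase it as: the $t+1$ vertices of $V(\mc{G})$ span a ``clique'' in the $2$-shadow of $\brm{Ext}(\mc{G})$.) On the other hand $\brm{Ext}(\mc{G}) - F$ is strongly $t$-colorable: having removed the unique edge covering the pair $\{w,x\}$, we may color $w$ and $x$ alike, the other $t-1$ vertices of $V(\mc{G})$ with the remaining $t-1$ colors, and each free extension vertex $v^P_i$ (of which each edge has its own private copies) with any color avoiding the at most $r-1<t$ colors already used on its edge; this is possible since the ``gadget'' edges are vertex-disjoint outside $V(\mc{G})$. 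So $F$ is critical and $(\brm{Ext}(\mc{G}),F)$ is freely $t$-critical. For (ii): $L_{\brm{Ext}(\mc{G})}(v) = L_{\brm{Ext}(\mc{G})}(w)$ is a matching, because every edge through $w$ uses $r-2$ private free vertices, so distinct edges through $w$ meet only in $w$; removing $w$ leaves a matching on the $(r-1)$-sets.

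\textbf{Condition (iii) — the main obstacle.} Fix $\mc{S}\subseteq[t]^{(r-1)}$ with $|\mc{S}|\geq\binom{t-1}{r-1}$ not isomorphic to $\mc{K}^{(r-1)}_{t-1}$; I must build $\varphi:V(\brm{Ext}(\mc{G}))\to[t]$ that is injective on each edge of $\brm{Ext}(\mc{G})\setminus v$ and sends every $I\in L(v)$ to a member of $\mc{S}$ with $|\varphi(I)|=r-1$. The plan: first color $V(\mc{G})-\{w\}$, which has $t$ vertices but whose edges (those of $\mc{G}$ together with the extension edges avoiding $w$) need only be rainbow; giving $V(\mc{G})-\{w\}$ all $t$ distinct colors $1,\dots,t$ works since these edges live inside $V(\mc{G})$ plus private free vertices. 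Then color each free vertex to make its edge rainbow (always possible, $r<t$), except for the free vertices lying on edges through $w$, i.e.\ on the sets $I\cup\{w\}$ for $I\in L(w)$. For each such $I$: $I$ contains exactly one vertex $y\in V(\mc{G})-\{w\}$ (already colored, say color $c$) and $r-2$ free vertices. Because $\mc{S}\not\cong\mc{K}^{(r-1)}_{t-1}$ but $|\mc{S}|\ge\binom{t-1}{r-1}$, a counting/pigeonhole argument (the one Sidorenko and the authors use in the tree case) shows that every color $c\in[t]$ belongs to some member of $\mc{S}$; choose $S\in\mc{S}$ with $c\in S$ and extend $\varphi$ on the $r-2$ free vertices of $I$ so that $\varphi(I)=S$. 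Since the edges through $w$ are pairwise disjoint off $w$, these choices don't interfere, and (iii-a), (iii-b) hold. The subtle point requiring care is verifying ``every color class of $\mc{S}$ is hit,'' equivalently that no single color is missing from all of $\mc{S}$ — if color $c$ were absent from $\bigcup\mc{S}$ then $\mc{S}\subseteq([t]-\{c\})^{(r-1)}$, which has exactly $\binom{t-1}{r-1}$ elements, forcing $\mc{S}=\binom{[t]-\{c\}}{r-1}\cong\mc{K}^{(r-1)}_{t-1}$, contradiction. Having established all three conditions, $(\brm{Ext}(\mc{G}),F,v)$ is a $t$-spike, so $\brm{Ext}(\mc{F}^{+(t+1)})$ is sharply $t$-critical, completing the proof.
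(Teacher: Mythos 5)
Your proposal is correct and takes essentially the same route as the paper: the same choice of $v$ (an added isolated vertex of $\mc{F}^{+(t+1)}$) and extension edge $F$ through it, the same easy verification of (i)--(ii), and for (iii) the same construction — strongly $t$-color everything outside the free vertices of edges through $v$, then use that $|\mc{S}|\ge\binom{t-1}{r-1}$ together with $\mc{S}\not\cong\mc{K}^{(r-1)}_{t-1}$ forces every color to lie in some member of $\mc{S}$, and extend over the pairwise disjoint links. You simply make explicit the details (conditions (i)--(ii) and the pigeonhole step) that the paper states without proof.
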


\begin{proof} Let $\mc{H}=\brm{Ext}(\mc{F}^{+(t+1)})$. Consider  $v \in V(\mc{F}^{+(t+1)}) - V(\mc{F})$ and let $F$ be any edge containing $v$. We will show that $(\mc{H},F,v)$ is a $t$-spike. 
Conditions (i) and (ii) in the definition of a $t$-spike are easy to verify. Consider \(\mc{S}\subseteq [t]\) with \(|\mc{S}|\geq {t-1 \choose r-1}\) with  $\mc{S}$ not isomorphic to  $\mc{K}_{t-1}^{(r-1)}$. We define the map \(\varphi:V(\mc{F}) \rightarrow [t]\) satisfying the conditions (iii-a) and (iii-b) as follows. 

Consider  the subgraph \(\mc{H}'\) of \(\mc{H}\) induced by the vertex set \(V(\mc{H})- (V(\mc{L}_{\mc{H}}(v)) - V(\mc{F}^{+(t+1)}))\).  Let \(\varphi|_{V(\mc{H}')}\) be any strong $t$-coloring of $\mc{H}'$. Then (iii-a) holds. For every \(I \in \mc{L}_{\mc{H}}(v)\), let $w$ be the unique vertex in $(I \cap V(\mc{F}^{+(t+1)}))-\{v\}$. Then there exists $S \in \mc{S}$ such that $w \in S$. Extend $\varphi$ to $I - \{v,w\}$ so that $\varphi(I)=S$. The resulting map $\varphi$ satisfies (iii-b).
\end{proof}

\begin{lem}\label{lem:mubayiweight}  Let \(\mc{F}\) be an \(r\)-graph that covers pairs, and let $t$ be such that \(  t  \geq \pl{v}(\mc{F}) \) and  \(\pi(\mc{F}) < r!\lambda(\mc{K}_t^{(r)})\). Let \(\mf{G}^*\) be the family of all \(r\)-graphs in \(\brm{Forb}(\mc{F}^{+(t+1)})\) which cover pairs. Then \(\mf{G}^*\)  is \(\mf{B}(\mc{K}_t^{(r)})\)-weakly weight stable.
\end{lem}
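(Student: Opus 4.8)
The goal is weak weight stability: given $\eps>0$, find $\delta>0$ so that any $\mc{F}' \in \mf{G}^*$ (an $r$-graph covering pairs, free of $\mc{F}^{+(t+1)}$) with a measure $\mu$ satisfying $\lambda(\mc{F}',\mu) \geq \lambda(\mc{K}_t^{(r)}) - \delta$ is $\eps$-close to $\mf{B}(\mc{K}_t^{(r)})$. The first step is to pass to the ``support'' of $\mu$: since $\lambda(\mc{K}_t^{(r)}) = \pl{e}(t,r)$ is the Lagrangian of the complete $r$-graph and $\mu$ nearly achieves it, most of the mass sits on a bounded number of vertices whose weights are bounded below. More precisely, I would argue that there is a set $S$ of at most, say, $2t$ vertices with $\mu(S) \geq 1-\eps'$, and after renormalizing $\mu$ to $\mu^*$ on $\mc{F}'[S]$ we still have $\lambda(\mc{F}'[S],\mu^*) \geq \lambda(\mc{K}_t^{(r)}) - \delta'$. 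This uses the standard fact that vertices of small weight contribute little, iterated, together with the monotonicity $\lambda(\mc{F}',\mu)\le\lambda(\mc{K}^{(r)}_{|{\rm supp}\,\mu|})$-type estimates; the bound $2t$ (or any constant) is what matters.

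**Using $F^{+(t+1)}$-freeness and $\pi(\mc{F}) < r!\lambda(\mc{K}_t^{(r)})$.** On the bounded vertex set $S$, I want to conclude $\mc{F}'[S]$ is (close to) a blowup of $\mc{K}_t^{(r)}$. The key structural input is that $\mc{F}'$ is $\mc{F}^{+(t+1)}$-free and covers pairs. I would first observe that since $\mc{F}'$ covers pairs and is $\mc{F}^{+(t+1)}$-free, $\mc{F}'$ itself cannot contain $\mc{F}$ together with any $t+1-\pl{v}(\mc{F})$ extra vertices in the same ``clique-like'' configuration — and more usefully, working on $S$, any $t+1$ vertices of $S$ must fail to host a copy of $\mc{F}$ on some $\pl{v}(\mc{F})$ of them (since the remaining $t+1-\pl{v}(\mc{F})$ vertices would serve as the isolated vertices of $\mc{F}^{+(t+1)}$, noting $\pl{v}(\mc{F})\le t$). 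Because $\mc{F}'$ covers pairs, the induced subgraph on any bounded set $U \subseteq S$ of size $N$ has edge density at most $\pi(\mc{F})/r! + o(1)$ on each $\pl{v}(\mc{F})$-subset is not quite the right phrasing; rather, I would take $U$ to be a large but constant-size blowup-like chunk of $S$ and apply the supersaturation/density statement $\pi(\mc{F}) < r!\lambda(\mc{K}_t^{(r)})$ to force that $\mc{F}'[U]$ is ``$(t-1)$-partite-like,'' i.e.\ does not contain $\mc{K}_t^{(r)}$ — so $\mc{F}'[S]$ avoids $\mc{K}_t^{(r)}$.

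**Putting it together.** Once I know $\mc{F}'[S]$ avoids $\mc{K}_t^{(r)}$ (equivalently is not strongly $t$-colorably-forced to contain it) while still $\mu^*$ has density $\geq \lambda(\mc{K}_t^{(r)}) - \delta'$, I invoke a stability version of Motzkin–Straus / the Lagrangian extremal result: an $r$-graph with no $\mc{K}_t^{(r)}$ whose Lagrangian is within $\delta'$ of $\lambda(\mc{K}_t^{(r)})$ must be $\eps$-close, in the weighted distance $d$, to a blowup of $\mc{K}_{t-1}^{(r)}$ — but wait, that gives $\mc{K}_{t-1}$, not $\mc{K}_t$; the point is $\lambda(\mc{K}_{t-1}^{(r)}) < \lambda(\mc{K}_t^{(r)})$ strictly, which would contradict the density lower bound for $\delta'$ small. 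Hence $S$ must actually contain a copy of $\mc{K}_t^{(r)}$, and then the argument of Lemma~\ref{auxlem1}'s final paragraph applies: restrict to those $t$ vertices (they carry nearly all the mass by the density computation, since $\lambda$ is maximized only by putting all mass uniformly on a $\mc{K}_t^{(r)}$), conclude $\mc{F}'$ restricted there is exactly $\mc{K}_t^{(r)}$, and obtain $d_{\mf{B}(\mc{K}_t^{(r)})}(\mc{F}',\mu) \leq \eps$ by zeroing out the measure off these $t$ vertices, exactly as in the proof of Theorem~\ref{weakweightstability}.

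**Main obstacle.** The delicate point is the second step: correctly leveraging $\pi(\mc{F}) < r!\lambda(\mc{K}_t^{(r)})$ together with $\mc{F}^{+(t+1)}$-freeness and the covers-pairs hypothesis to rule out $\mc{K}_t^{(r)}$-free near-extremal configurations other than blowups of $\mc{K}_t^{(r)}$. One must be careful that the covers-pairs hypothesis on $\mc{F}'$ is what lets a dense constant-size chunk of $S$ be ``upgraded'' to contain $\mc{F}^{+(t+1)}$ as soon as it contains $\mc{F}$ plus enough spare vertices — so the real content is: in $\mc{F}'[S]$, every $\pl{v}(\mc{F})$-subset spanning a copy of $\mc{F}$ together with $t+1-\pl{v}(\mc{F})$ further vertices of $S$ is forbidden, which because pairs are covered and $|S|$ is bounded forces the density on $S$ to obey the $\mc{F}$-Turán bound on subchunks, hence (via supersaturation and $\pi(\mc{F})<r!\lambda(\mc{K}^{(r)}_t)$) forces a clean $\mc{K}_t^{(r)}$-blowup structure. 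Making the quantifiers in this ``upgrade + supersaturation'' step precise, with all the constants $\delta \ll \delta' \ll \eps' \ll \eps$ chosen consistently, is where the actual work lies.
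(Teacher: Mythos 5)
Your plan has a genuine gap at its load-bearing step. The tool you invoke in the final stage --- a (stability version of a) Motzkin--Straus theorem asserting that a $\mc{K}_t^{(r)}$-free $r$-graph has Lagrangian at most, or close to, $\lambda(\mc{K}_{t-1}^{(r)})$ --- is simply false for $r\geq 3$: for example, $\mc{K}_4^{(3)}$-free $3$-graphs (the conjectured Tur\'an configurations, which cover pairs) have Lagrangian around $\tfrac{5}{9}\cdot\tfrac{1}{3!}$, far above $\lambda(\mc{K}_3^{(3)})=1/27$. So the contradiction you want ("hence $S$ must contain a copy of $\mc{K}_t^{(r)}$") does not follow, and the whole scheme of ruling out $\mc{K}_t^{(r)}$-free near-extremal configurations via a Motzkin--Straus-type bound collapses. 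The concentration step is also unjustified: for an arbitrary given $\mu$ on an arbitrary member of $\mf{G}^*$ it is not a "standard fact" that $1-\eps'$ of the mass sits on $O(t)$ vertices; it is true here only a posteriori, because any member of $\mf{G}^*$ whose weighted density is near $\lambda(\mc{K}_t^{(r)})$ must in fact be $\mc{K}_t^{(r)}$ itself --- which is exactly what has to be proved. Likewise, your step 2 cannot be right as stated: in the extremal case $\mc{F}'=\mc{K}_t^{(r)}$, so one cannot hope to show "$\mc{F}'[S]$ avoids $\mc{K}_t^{(r)}$" from the density hypothesis; the correct dichotomy is on $\pl{v}(\mc{F}')$, not on subchunks of $S$.

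The mechanism actually needed, which your last paragraph circles but never lands on, is much simpler and makes the lemma essentially a finite check. If $\mc{G}\in\mf{G}^*$ has more than $t$ vertices, then $\mc{G}$ contains no copy of $\mc{F}$ at all: a copy of $\mc{F}$ together with any $t+1-\pl{v}(\mc{F})$ spare vertices is a copy of $\mc{F}^{+(t+1)}$, since the added vertices are isolated (your "upgrade" idea, but applied globally rather than to constant-size chunks). Now use that the \emph{forbidden} graph $\mc{F}$ covers pairs: blowups of an $\mc{F}$-free graph are again $\mc{F}$-free, because a copy of $\mc{F}$ in a blowup uses at most one clone per class; hence $\lambda(\mc{G})=\sup_{\mc{B}\in\mf{B}(\mc{G})}|\mc{B}|/\pl{v}(\mc{B})^{r}\leq \pi(\mc{F})/r!<\lambda(\mc{K}_t^{(r)})$. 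This is where the hypothesis $\pi(\mc{F})<r!\lambda(\mc{K}_t^{(r)})$ enters --- through the Lagrangian-vs-Tur\'an-density bound for covers-pairs $\mc{F}$, not through supersaturation on $S$. If instead $\pl{v}(\mc{G})\leq t$ and $\mc{G}\not\cong\mc{K}_t^{(r)}$, then $\lambda(\mc{G})$ is at most the maximum Lagrangian over the finite family of graphs on at most $t$ vertices other than $\mc{K}_t^{(r)}$, which is strictly below $\lambda(\mc{K}_t^{(r)})$. Taking $\delta$ smaller than both gaps, any $(\mc{G},\mu)$ with $\lambda(\mc{G},\mu)\geq\lambda(\mc{K}_t^{(r)})-\delta$ forces $\mc{G}\cong\mc{K}_t^{(r)}$, so $d_{\mf{B}(\mc{K}_t^{(r)})}(\mc{G},\mu)=0$; no concentration argument, removal/supersaturation, or Lagrangian stability input is required.
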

\begin{proof} Let $\mf{T}$ be the family of all $r$-graphs on at most $t$ vertices not isomorphic to $\mc{K}_t^{(r)}$, and let $\lambda^* = \max\{\lambda(\mf{T}),\pi(\mc{F})/r!\}$. Then $\lambda^* < \lambda (\mc{K}_t^{(r)}).$ Thus it suffices to show that, if $\lambda(\mc{G}) > \lambda^*$ for some $\mc{G} \in \mf{G}^*$, then $\mc{G}$ is isomorphic to $\mc{K}_t^{(r)}$. 

If $\pl{v}(\mc{G}) \leq t$ then $\mc{G}$ is isomorphic to $\mc{K}_t^{(r)}$, as otherwise $\lambda(\mc{G}) \leq \lambda(\mf{T}).$ Thus we assume  $\pl{v}(\mc{G}) > t$.
Then $\mc{G}$ is $\mc{F}$-free, and, as $\mc{F}$ covers pairs, it follows that $\mf{B}(\mc{G})$ is  $\mc{F}$-free. It is not hard to see that $$\lambda(\mc{G})=\sup_{\mc{B} \in \mf{B}(\mc{G})}\frac{|\mc{B}|}{\pl{v}(\mc{B})^{r}} \leq \frac{\pi(\mc{F})}{r!} \leq \lambda^*,$$
a contradiction.
\end{proof}

\vskip 10pt
\subsection*{Acknowledgement}
When preparing the final version of this paper we have learned that Brandt, Irwin and Jiang~\cite{jiang} independently proved Theorems~\ref{thm:trees} and~\ref{thm:mubayigen}. While their proofs also mainly use stability techniques, the details are different. For example, a substantial part of~\cite{jiang} involves a symmetrization argument, paralleling~\cite{pikhurko2}. We use a generic tool, Theorem~\ref{stabilityfromlocalstability}, instead.

\bibliographystyle{amsplain}
\bibliography{lib}
\end{document}